\newcommand{\ds}{\displaystyle}
\newcommand{\rrvert}{\vert}
\newcommand{\llvert}{\vert}
\newcommand{\eqref}[1]{(\ref{#1})}
\newtheorem{prop}{Proposition}[section]
\newtheorem{cor}{Corollary}[section]
\newtheorem{lma}{Lemma}[section]
\newcommand{\ud}{d}
\newcommand{\LL}{\mathbf{L}}
\newcommand{\PP}{\mathscr{P}}
\newcommand{\K}{\mathbf{Ker}}
\newcommand{\Id}{\mathbf{Id}}
\newcommand{\N}{\mathbb{N}}
\newcommand{\R}{\mathbb{R}}
\newcommand{\E}{\mathbb{E}}
\renewcommand{\P}{\mathbb{P}}
\begin{document}
\begin{frontmatter}

%\dochead{}
\title{Generalization of the Nualart--Peccati criterion}
\runtitle{Generalized Nualart--Peccati criterion}

\begin{aug}
% Corresponding author: Ehsan Azmoodeh - ehsan.azmoodeh@uni.lu% Updated by VTEXPTS2LaTeX.exe, 23.01.2015 15:00
%by VTEXPTS2LaTeX.exe, 20.01.2015 09:03
\author[A]{\fnms{Ehsan}~\snm{Azmoodeh}\corref{}\ead[label=e1]{ehsan.azmoodeh@uni.lu}\thanksref{m1}\thanksref{T1}},
\author[B]{\fnms{Dominique}~\snm{Malicet}\ead[label=e2]{Malicet.dominique@crans.org}\thanksref{m2}},
\author[C]{\fnms{Guillaume}~\snm{Mijoule}\ead[label=e3]{guillaume.mijoule@m4x.org}\thanksref{m3}}
\and
\author[D]{\fnms{Guillaume}~\snm{Poly}\ead[label=e4]{guillaume.poly@univ-rennes1.fr}\thanksref{m4}\thanksref{T2}}
\runauthor{Azmoodeh, Malicet, Mijoule and Poly}
\affiliation{University of Luxembourg\thanksmark{m1}, PUC-Rio\thanksmark{m2},  Universit\'{e} Paris-Sud 11\thanksmark{m3} and University Rennes 1\thanksmark{m4}}
%\dedicated{}
\address[A]{E. Azmoodeh\\
Mathematices Research Unit\\
Universit\'e du Luxembourg\\
Luxembourg City, 1359\\
Luxembourg\\
\printead{e1}}
\address[B]{D. Malicet\\
Departamento de Matem\'atica\\
PUC-Rio\\
Rio de Janeiro, 22451-900\\
Brazil\\
\printead{e2}\hspace*{23pt}}
\address[C]{G. Mijoule\\
D\'{e}partment de Math\'{e}matiques\\
Facult\'{e} des Sciences d'Orsay\\
Universit\'{e} Paris-Sud 11\\
F-91405 Orsay Cedex\\
France\\
\printead{e3}}
\address[D]{G. Poly\\
Institut de Recherche Math\'{e}matiques \\
Universit\'e de Rennes 1\\
35042, Rennes C\'edex\\
France\\
\printead{e4}}
\end{aug}
\thankstext{T1}{Supported by research project
F1R-MTH-PUL-12PAMP.}
\thankstext{T2}{Supported by AFR Grant 4897114.}

% HISTORY:
%
\received{\smonth{11} \syear{2013}}% Updated by VTEXPTS2LaTeX.exe,
%20.01.2015 09:03
%
\revised{\smonth{12} \syear{2014}}% Updated by VTEXPTS2LaTeX.exe,
%20.01.2015 09:03

% ABSTRACT
\begin{abstract}
The celebrated Nualart--Peccati criterion
[\textit{Ann. Probab.} \textbf{33} (2005) 177--193]
ensures the
convergence in distribution toward a standard Gaussian random variable
$N$ of a given sequence $\{X_n\}_{n\ge1}$ of multiple\vspace*{1pt} Wiener--It\^{o}
integrals of fixed order, if $\E[X_n^2]\to1$ and $\E[X_n^4]\to\E
[N^4]=3$. Since its appearance in 2005, the natural question of
ascertaining which
other moments can replace the fourth moment in the above criterion has
remained entirely open. Based on the technique recently
introduced in
[\textit{J. Funct. Anal.} \textbf{266} (2014) 2341--2359], we settle this problem and establish that
the convergence of \textit{any even moment}, greater than four, to the
corresponding moment of the standard Gaussian distribution, guarantees
the central convergence. As a by-product, we provide many new moment
inequalities for multiple Wiener--It\^{o} integrals. For instance, if
$X$ is a normalized multiple Wiener--It\^o integral of order greater than
one,
\[
\forall k \ge2, \qquad \E\bigl[X^{2k}\bigr] > \E\bigl[N^{2k}
\bigr]=(2k-1)!!.
\]
\end{abstract}

% KEYWORDS
% Pirmas kwd is didziosios raides
\begin{keyword}[class=AMS]
%\kwd[Primary ]{}
\kwd{60F05}
\kwd{47D07}
\kwd{33C45}
\kwd{60H07}
\kwd{34L05}
%\kwd[; secondary ]{}
\end{keyword}
\begin{keyword}
\kwd{Nualart--Peccati criterion}
\kwd{Markov diffusive generators}
\kwd{moment inequalities}
\kwd{$\Gamma$-calculus}
\kwd{Hermite polynomials}
\kwd{spectral theory}
\end{keyword}
\end{frontmatter}

%s1 #&#
\section{Introduction and summary of the main results}\label{sec1}

Let $\{B_t\}_{t\ge0}$ be a standard Brownian motion and $p$ be an
integer greater than $1$. For any deterministic and symmetric function
$f\in L^2(\R_+^p,\lambda_p)$ ($\lambda_p$ stands for $p$-dimensional
Lebesgue measure), let $I_p(f)$ be the $p{\mathrm{th}}$ multiple
Wiener--It\^{o} integral of $f$ with respect to $\{B_t\}_{t\ge0}$ (see
\cite{Nu-book} for a precise definition). The vector space spanned by all
the multiple integrals of order $p$ is called the $p{\mathrm{th}}$
Wiener chaos. A fundamental result of stochastic calculus, customarily
called the \textit{Wiener--It\^{o} decomposition}, asserts that any
square integrable functional of $\{B_t\}_{t\ge0}$ can be uniquely
expanded as an orthogonal sum of multiple Wiener--It\^{o} integrals. As
such, the study of the properties of multiple Wiener--It\^{o} integrals
becomes a central topic of research in modern stochastic analysis and a
great part of the so-called \textit{Malliavin calculus} (see, e.g.,
\cite{Nu-book,n-p}) relies on it.

The following result, nowadays known as the \textit{fourth moment
theorem}, yields an effective criterion of central convergence for a
given sequence of multiple Wiener--It\^{o} integrals of a fixed order.
%
%th1.1 #&#
\begin{thm}[(Nualart--Peccati \cite{n-p-05})]\label{peccati}
Let $p \ge2$ and $f_n$ be a sequence of symmetric elements of $L^2(\R
_+^p,\lambda_p)$. Assume $X_n=I_p(f_n)$ verifies $ \E [X_n^2
]\to1$. Then, as $n \to\infty$,
\[
\label{CLTFM}
X_n \stackrel{\mathit{law}}{\rightarrow}N \sim
\mathcal{N}(0,1) \quad \mbox{if and only if}\quad \E\bigl[X_n^4
\bigr]\to\E\bigl[N^4\bigr]=3.
\]
\end{thm}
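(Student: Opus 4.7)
The plan is to treat the two directions of the equivalence separately. The forward direction ($X_n \stackrel{\text{law}}{\to} N$ implies $\E[X_n^4]\to 3$) follows from the hypercontractivity of the Ornstein-Uhlenbeck semigroup, which on the $p$-th Wiener chaos gives $\|X_n\|_{L^q} \leq c_{p,q}\|X_n\|_{L^2}$ for every $q\geq 2$. Since $\E[X_n^2]$ is bounded, the family $\{X_n^4\}$ is uniformly integrable, and convergence in law upgrades to convergence of the fourth moment.

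The reverse direction is the substantive half. My approach proceeds in two steps. First, I would apply the multiplication formula for multiple Wiener-It\^{o} integrals to expand $I_p(f_n)^2$ as an orthogonal sum of integrals $I_{2p-2r}(f_n\widetilde{\otimes}_r f_n)$ for $r=0,\dots,p$, where $\widetilde{\otimes}_r$ denotes the symmetrized $r$-th contraction. Squaring and using the orthogonality of distinct chaoses together with the isometry property, one arrives at an identity of the form
$$\E[X_n^4] - 3\,\E[X_n^2]^2 \;=\; \sum_{r=1}^{p-1} c_{p,r}\,\bigl\|f_n\widetilde{\otimes}_r f_n\bigr\|^2_{L^2(\R_+^{2p-2r})},$$
with strictly positive combinatorial constants $c_{p,r}$ depending only on $p$ and $r$. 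The hypothesis then forces $\|f_n\otimes_r f_n\|\to 0$ for every $r=1,\dots,p-1$, while the extreme terms ($r=0$ and $r=p$) reproduce exactly the Gaussian contribution $3\E[X_n^2]^2$.

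Second, I would convert the vanishing of these contractions into convergence in law, which is most cleanly done through the Malliavin-Stein machinery. For $X=I_p(f)$ one has $\tfrac{1}{p}\|DX\|_H^2 - \E[X^2]$ equal to a finite sum of multiple integrals whose $L^2$-norm is controlled by a linear combination of $\|f_n\otimes_r f_n\|^2$, $r=1,\dots,p-1$. Feeding this into the Stein bound
$$d_{\mathrm{TV}}(X_n,N) \;\leq\; C\Bigl(|1-\E[X_n^2]| + \mathrm{Var}\bigl(\tfrac{1}{p}\|DX_n\|_H^2\bigr)^{1/2}\Bigr)$$
yields convergence in total variation, and \emph{a fortiori} in law. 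The main obstacle of the whole scheme lies in establishing the boxed identity above with the correct \emph{positivity} of the coefficients $c_{p,r}$: this demands careful combinatorial bookkeeping of the multiplicities in the product formula and of the symmetrization of contractions, so that after subtracting the purely Gaussian piece $3\E[X_n^2]^2$ only nonnegative contributions survive. Once this structural identity is in hand, the rest of the argument is essentially mechanical.
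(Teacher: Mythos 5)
Your argument is correct, but it is not the route this paper takes. The paper never reproves Theorem \ref{peccati} by the product formula; it treats the statement as a special case of its spectral machinery: property (c) makes the bilinear form $\mathcal{M}_2$ of Theorem \ref{thm:matrix} nonnegative, the determinant of the $3\times 3$ moment matrix $\mathbf{M}_2$ (or, equivalently, inequality (\ref{Main-ineq}) applied to $P(x)=x^2-\E[X^2]$) yields $\mathrm{Var}(\Gamma[X_n])\le \tfrac{p^2}{3}\left(\E[X_n^4]-3\E[X_n^2]^2\right)$, and the Stein-type bound of Theorem \ref{Gamma-bound-general}, $d_{\text{TV}}(X_n,\mathcal{N}(0,1))\le \tfrac{2}{p}\sqrt{\mathrm{Var}(\Gamma[X_n])}$, then closes the argument (Theorem \ref{Steindirichletfinal}). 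What you propose instead is the classical Nualart--Ortiz-Latorre/Nourdin--Peccati proof: expand $I_p(f_n)^2$ by the multiplication formula, express the fourth cumulant as a positive combination of contraction norms, and feed $\mathrm{Var}\bigl(\tfrac1p\|DX_n\|_H^2\bigr)$ into the Malliavin--Stein bound. Both are valid; yours gives the explicit combinatorial constants $c_{p,r}$ and direct information on the kernels $f_n$, while the paper's approach deliberately avoids the Fock-space combinatorics (which it argues become intractable for higher powers) and therefore extends verbatim to any $(E,\mu,\LL)$ satisfying (a)-(b)-(c) and, crucially for this paper, to the $2k$-th moment criterion. One small bookkeeping caveat in your sketch: the $r=0$ term $(2p)!\,\|f_n\widetilde{\otimes}_0 f_n\|^2$ does not reproduce exactly $2\E[X_n^2]^2$ --- symmetrization of $f_n\otimes f_n$ produces an additional nonnegative sum $(p!)^2\sum_{r=1}^{p-1}\binom{p}{r}^2\|f_n\otimes_r f_n\|^2$ of \emph{unsymmetrized} contraction norms; the clean identity with only symmetrized contractions, $\E[X_n^4]-3\E[X_n^2]^2=\tfrac{3}{p}\sum_{r=1}^{p-1} r\,(r!)^2\binom{p}{r}^4(2p-2r)!\,\|f_n\widetilde{\otimes}_r f_n\|^2$, is nonetheless true and either version of the positivity suffices for your argument.
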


The main goal of this article is to show that the above theorem is a
particular case of a more general phenomenon. This is the content of
the next theorem.

%th1.2 #&#
\begin{thm}\label{CLTFMquibourre}
Under the assumptions of Theorem~\ref{peccati}, for any integer $k\ge
2$, as $n \to\infty$,
\[
\label{CLTFM-ultimate}
X_n \stackrel{\mathit{law}} {\rightarrow}N \sim
\mathcal{N}(0,1)\quad \mbox{if and only if}\quad \E\bigl[X_n^{2k}
\bigr]\to\E\bigl[N^{2k}\bigr]=(2k-1)!!,
\]
where the double factorial is defined by $(2k-1)!!=\prod_{i=1}^k (2i-1)$.
\end{thm}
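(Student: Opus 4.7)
The plan is to derive Theorem \ref{CLTFMquibourre} from a single quantitative moment inequality inside a fixed Wiener chaos: for $X = I_p(f)$ with $p\ge 2$ and $\E[X^2]=1$, I aim to prove
\begin{equation*}
\E[X^{2k}] - (2k-1)!! \;\ge\; c_{p,k}\bigl(\E[X^4] - 3\bigr), \qquad c_{p,k} > 0.
\end{equation*}
Since $\E[X^4] - 3 \ge 0$ for elements of a chaos of order $p\ge 2$ is exactly the lower bound contained in the proof of Theorem \ref{peccati}, such an inequality would yield simultaneously the strict comparison $\E[X^{2k}] > (2k-1)!!$ promised in the abstract (for nonzero $X$) and the key reduction ``$\E[X_n^{2k}] \to (2k-1)!! \Rightarrow \E[X_n^4] \to 3$.'' Theorem \ref{peccati} then closes the ``if'' direction of Theorem \ref{CLTFMquibourre}. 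The ``only if'' direction is immediate from hypercontractivity on fixed chaos, which makes $\{X_n^{r}\}_n$ uniformly integrable for every $r<\infty$ and upgrades weak convergence $X_n \stackrel{\mathrm{law}}{\rightarrow} N$ to convergence of every moment.

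\textbf{Core computation.} To produce the inequality I would work with the Ornstein--Uhlenbeck generator $L$ and its carr\'e du champ $\Gamma$, using that $-L^{-1}X = X/p$ on the $p$-th chaos. Integration by parts applied to $\E[X \cdot X^{2k-1}]$ gives the exact recursion
\begin{equation*}
\E[X^{2k}] - (2k-1)\,\E[X^{2k-2}] \;=\; \frac{2k-1}{p}\,\E\bigl[(\Gamma(X,X) - p)\, X^{2k-2}\bigr].
\end{equation*}
Subtracting the corresponding Gaussian recursion $\E[N^{2k}] = (2k-1)\E[N^{2k-2}]$ and unwinding by induction on $k$ expresses $\E[X^{2k}] - (2k-1)!!$ as a linear combination of correlations $\E[(\Gamma(X,X)-p)\,Q_j(X)]$, with explicit polynomials $Q_j$ of degree at most $2k-2$. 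The case $k=2$ recovers the Nualart--Peccati identity $\E[X^4] - 3 = \tfrac{3}{p}\,\E[(\Gamma(X,X)-p)X^2]$, so the target estimate reduces to comparing the full combination with its $k=2$ component.

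\textbf{Main obstacle.} The delicate point is positivity: the correlations $\E[(\Gamma(X,X)-p)\,Q_j(X)]$ have no obvious sign, because the product $\Gamma(X,X)\cdot Q_j(X)$ spreads across several chaoses of varying orders. My plan, in the spirit of \cite{a-c-p}, is to re-expand $Q_j$ in a basis adapted to the diffusive structure (Hermite polynomials evaluated at $X$, or elements of the algebra generated by iterated $\Gamma$-operations on $X$), and then to diagonalize the linear functional $\phi \mapsto \E[(\Gamma(X,X)-p)\,\phi(X)]$ in this basis. The combinatorics of the product formula for multiple Wiener--It\^o integrals should reveal each diagonal coefficient as a manifestly non-negative weighted sum of squared contraction norms of $f$; collecting these with the Gaussian recursion then yields both the positivity and an admissible constant $c_{p,k}$. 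Securing the positivity uniformly in the degree $2k$ and in $p$, and avoiding catastrophic cancellation between modes, is the main technical hurdle. Once it is in hand, Theorem \ref{peccati} delivers the conclusion.
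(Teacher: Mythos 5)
Your overall architecture is the same as the paper's: reduce the $2k$-th moment condition to the fourth moment condition by proving a quantitative inequality of the form $\E[X^{2k}]-(2k-1)!!\ \ge\ c_k\,(\E[X^4]-3)$ and then invoking Theorem \ref{peccati}; your handling of the easy direction via hypercontractivity and uniform integrability is also correct, and your integration-by-parts recursion $\E[X^{2k}]-(2k-1)\E[X^{2k-2}]=\frac{2k-1}{p}\E[(\Gamma(X,X)-p)X^{2k-2}]$ is exact. However, there is a genuine gap at the heart of the argument: you reduce everything to the positivity of correlations $\E[(\Gamma(X,X)-p)\,Q_j(X)]$ and then only describe a hoped-for plan for establishing it. The individual terms $\E[(\Gamma(X,X)-p)X^{2j}]$ produced by unwinding your recursion have no definite sign for $j\ge 2$, and the route you sketch --- re-expanding, "diagonalizing" the functional $\phi\mapsto\E[(\Gamma(X,X)-p)\phi(X)]$, and reading off nonnegativity from contraction norms via the product formula --- is precisely the combinatorial approach that becomes intractable beyond the fourth power (this is why the problem stayed open after 2005). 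Nothing in your proposal shows why catastrophic cancellation does not occur, so the key inequality is asserted rather than proved.

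The paper closes this gap with two ingredients that are absent from your proposal. First, the correct nonnegative building blocks are not powers of $X$ but the polynomials $W_k(x)=(2k-1)\bigl(x\int_0^x H_k H_{k-2}-H_k(x)H_{k-2}(x)\bigr)$: the diffusion identity $(\LL+kp\,\Id)H_k(X)=k(k-1)H_{k-2}(X)(\Gamma(X)-p)$ shows that $\E[W_k(X)]$ is, up to a positive factor, the value $\mathcal{M}_k(H_k,H_k)$ of the quadratic form $P\mapsto\E[P(X)(\LL+kp\,\Id)P(X)]$, and this form is positive semidefinite as a consequence of the spectral stability assumption (property (c)), with no product-formula combinatorics at all. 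Second, one must show that $T_k(x)=x^{2k}-k(2k-1)!!\,x^2+(k-1)(2k-1)!!$ expands over $\{W_i\}_{2\le i\le k}$ with strictly positive coefficients $\alpha_{i,k}$; this is a nontrivial Hermite-polynomial identity (an explicit beta-integral formula for $\alpha_{i,k}$, proved in the paper's Appendix by solving a polynomial functional equation and a triangular linear system). Only then does $\E[T_k(X)]\ge\alpha_{2,k}\E[W_2(X)]=\alpha_{2,k}(\E[X^4]-6\E[X^2]+3)$ deliver your target inequality. Without these two steps your text is a strategy outline rather than a proof.
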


The discovery of the fourth moment theorem by  Nualart and  Peccati
(see \cite{n-p-05}) is arguably a major breakthrough in the field of
Gaussian approximation in the Wiener space. It resulted in a
drastic simplification of the so-called \textit{method of moments}
(consisting in checking the convergence of \textit{all} the moments)
which was so far the only alternative to establish a central limit
theorem. We refer to Breuer and Major \cite{BM}, Chambers and Slud \cite
{CS}, Surgailis \cite{SU} and \cite{NS11,GW} for a nonexhaustive
exposition of some results provided by the method of moments. The first
proof of Theorem~\ref{peccati} relies on tools from stochastic analysis (namely the
Dambis, Dubins and Schwartz's theorem; see, e.g.,
\cite{RY}, Chapter V). Later on, in the seminal paper \cite{n-o},  Nualart and
Ortiz-Latorre discovered a fundamental link between the central
convergence of a sequence of elements of a fixed Wiener chaos and the
convergence to a constant of the norms of their Malliavin derivatives.
The role played
by the Malliavin derivative in the fourth moment theorem was later on
confirmed in the landmark paper \cite{n-p-2}. There,  Nourdin and
Peccati combined Stein's method and Malliavin calculus to provide a new
proof of Theorem~\ref{peccati} culminating in sharp estimates for
various distances. As an illustrative example of such estimates, they
could prove the following quantitative version of Theorem~\ref{peccati}.

%th1.3 #&#
\begin{thm}[(Nourdin--Peccati \cite{n-p-2})]\label{quantificationtheo}
Let $p\ge2$. Assume\vspace*{1pt} that $X=I_p(f)$ where $f$ is a symmetric element
of $L^2(\R_+^p,\lambda_p)$ such that $\E[X^2]=1$. Then,
%
%e1.1 #&#
\begin{equation}
\label{quantification}
d_{\mathrm{TV}} \bigl(X,\mathcal{N}(0,1) \bigr) \le
\tfrac{2}{\sqrt{3}} \sqrt{\E \bigl[X^4 \bigr]-3},
\end{equation}
where $d_{\mathrm{TV}}$ stands for the total variation distance.
\end{thm}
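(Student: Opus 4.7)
The plan is to address only the nontrivial direction: the converse ($X_n\to N$ in law forces $\E[X_n^{2k}]\to(2k-1)!!$) is immediate from hypercontractivity on the $p$-th Wiener chaos, which upgrades the $L^2$-bound $\E[X_n^2]\to 1$ into uniform $L^q$-bounds and hence into uniform integrability of $X_n^{2k}$ for every~$k$.

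The strategy for the hard direction is to reduce it to Theorem \ref{peccati} by establishing a comparison inequality of the form
\begin{equation*}
\E[X^{2k}]-(2k-1)!!\;\ge\;c_{p,k}\,\bigl(\E[X^4]-3\bigr)
\end{equation*}
for every normalized $X=I_p(f)$, with a strictly positive constant $c_{p,k}>0$ depending only on $p$ and $k$. Once this is available, renormalizing $X_n$ by $\sqrt{\E[X_n^2]}$ keeps it inside the $p$-th chaos, and the hypothesis $\E[X_n^{2k}]\to(2k-1)!!$ immediately forces $\E[X_n^4]\to 3$, whereupon Theorem \ref{peccati} delivers the central convergence. As a by-product, strict positivity of $c_{p,k}$ will also give the moment inequality $\E[X^{2k}]>(2k-1)!!$ announced in the abstract, since $\E[X^4]>3$ whenever $X$ is a non-Gaussian element of a fixed chaos of order $\ge 2$.

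The proof of the key inequality would be carried out inside the $\Gamma$-calculus of the Ornstein-Uhlenbeck generator $L$, whose action on the $p$-th chaos is multiplication by $-p$. Iterating the integration-by-parts identity $\E[XF(X)]=-\tfrac{1}{p}\E[F'(X)\,\Gamma(X,X)]$ starting from $F(X)=X^{2k-1}$, and repeatedly substituting $\Gamma(X,X)=p\,\E[X^2]+(\Gamma(X,X)-p\,\E[X^2])$, one obtains a decomposition
\begin{equation*}
\E[X^{2k}]\;=\;(2k-1)!!\,\E[X^2]^{k}\;+\;R_{p,k}(X),
\end{equation*}
where the remainder $R_{p,k}(X)$ is a polynomial expression in moments of $\Gamma(X,X)-p\,\E[X^2]$ and its iterated $\Gamma$-transforms. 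Following the technique of \cite{a-c-p}, the spectral decomposition of $L$ on each chaos contained in $\Gamma(X,X)$ (which lives in chaoses of orders $0,2,\dots,2(p-1)$) rewrites $R_{p,k}(X)$ as a sum of squared Hermite coefficients with strictly positive weights. Since the classical Nourdin-Peccati identity expresses $\E[X^4]-3$ as a fixed positive multiple of $\|\Gamma(X,X)-p\,\E[X^2]\|_{L^2}^2$, one of these squares is exactly proportional to $\E[X^4]-3$; discarding the remaining nonnegative squares yields the desired $c_{p,k}$.

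The main obstacle is the combinatorial bookkeeping: the iterated integration by parts generates coefficients that mix contributions from every intermediate chaos inside $\Gamma(X,X)$, and one has to verify that the spectral weight attached to the fourth-moment square neither vanishes nor degenerates as $f$ varies. I would handle this by induction on $k$, at each step isolating the lowest-order Hermite component of $R_{p,k}(X)$ and using positivity of the eigenvalues of $-L$ on chaoses of order $\ge 1$ to preclude any cancellation, thereby producing an explicit $c_{p,k}>0$ and completing the reduction to Theorem \ref{peccati}.
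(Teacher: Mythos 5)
Your proposal does not address the statement in question. Theorem \ref{quantificationtheo} is the quantitative Nourdin--Peccati estimate $d_{\text{TV}}(X,\mathcal{N}(0,1))\le \frac{2}{\sqrt{3}}\sqrt{\E[X^4]-3}$ for a \emph{single} normalized element $X$ of a fixed chaos; it is a total-variation bound, not a convergence criterion. What you have written is a strategy for Theorem \ref{CLTFMquibourre} (the $2k$-th moment criterion for sequences): everything in your argument is aimed at producing the comparison inequality $\E[X^{2k}]-(2k-1)!!\ge c_{p,k}(\E[X^4]-3)$ and then invoking Theorem \ref{peccati}. Nowhere does the total variation distance enter, and no step of your plan could output the constant $\frac{2}{\sqrt{3}}$ or indeed any bound on $d_{\text{TV}}$. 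The missing ingredients are exactly the two links the paper records in Theorems \ref{Gamma-bound-general} and \ref{Steindirichletfinal}: the Stein-method (or $\Gamma$-calculus) bound $d_{\text{TV}}(X,\mathcal{N}(0,1))\le \frac{2}{p}\sqrt{\text{Var}(\Gamma[X])}$, and the spectral inequality $\text{Var}(\Gamma[X])\le \frac{p^2}{3}\left(\E[X^4]-3\E[X^2]^2\right)$, which follows from the fact that $X^2$ expands over chaoses of order at most $2p$. Chaining these two gives \eqref{quantification}; your proposal contains neither.

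Two further cautions even for the theorem you did set out to prove. First, your appeal to ``the classical Nourdin--Peccati identity expressing $\E[X^4]-3$ as a fixed positive multiple of $\|\Gamma(X,X)-p\,\E[X^2]\|_{L^2}^2$'' is wrong as stated: the relation is an \emph{inequality} ($\E[X^4]-3\ge \frac{3}{p^2}\text{Var}(\Gamma[X])$, with the reverse direction only up to a $p$-dependent constant), so you cannot ``identify one of the squares as exactly proportional to $\E[X^4]-3$.'' Second, the positivity of the coefficients in your remainder decomposition is precisely the hard point; the paper resolves it not by induction on iterated integrations by parts but by exhibiting the explicit family $W_k$ built from products $H_kH_{k-2}$ of Hermite polynomials, proving $\E[W_k(X)]\ge 0$ via positivity of the forms $\mathcal{M}_k$, and computing the expansion coefficients of $x^{2k}-k(2k-1)!!x^2+(k-1)(2k-1)!!$ in closed form (Proposition \ref{tenduduslip}). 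Your sketch leaves this combinatorial core unproved.
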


This innovative approach, combining Malliavin calculus and Stein's\break 
method, gave a new impetus to the well studied field of normal
approximation within Gaussian spaces. Indeed, it resulted in
spectacular improvements of many classical results previously obtained
by the
method of moments. For an exposition of this fertile line of research,
one can consult the book \cite{n-p}, the surveys \cite{p14survey,C14}
and the following frequently updated webpage which aims at
referencing all the articles dealing with the so-called \textit{Malliavin--Stein method}:
\surl{https://sites.google.com/site/malliavinstein/home}.
Finally, we mention that Theorem~\ref{quantificationtheo} has been
generalized in various directions such as the optimality of the rate of
convergence \cite{Optimal}, multivariate settings
\cite{Tudor,Entropy}, free probability settings \cite{K12} and general
homogeneous sums \cite{npsp14}.

Unfortunately, if the quantitative aspects of the latter approach are
now quite well understood, the heavily combinatorial nature of the
proofs remains a major stumbling block in tackling the following
central questions:
\begin{longlist}[(C)]
\item[(A)] What are the target distributions for which a moment
criterion similar to~(\ref{CLTFM}) is available?
\item[(B)] What are the moment conditions ensuring the central convergence?
\item[(C)] What are the special properties of Wiener chaos playing a
role in the fourth moment phenomenon?
\end{longlist}
Indeed, most of the aforementioned proofs of the fourth moment theorem
make crucial use of the product formula for multiple Wiener--It\^{o}
integrals together with some properties of the underlying Fock space.
Such an approach becomes already inextricable when one tries to make
explicit the Wiener--It\^{o} decomposition of the $6$th power
of a multiple integral. As such, writing explicitly the Wiener--It\^{o}
decompositions of the successive powers of a given multiple
Wiener--It\^{o} integral, which is the core of the previous strategies,
seems totally hopeless for our purpose. Inspired by the remarkable intuition
that the fourth moment phenomenon could also be explained by the
spectral properties of Markov operators, Ledoux produced a new proof
of Theorem~\ref{peccati} (see \cite{le}). His approach, exclusively
based on the study of some differential operators such as the
Ornstein--Uhlenbeck generator and the iterated gradients, and avoids
completely the product formula for multiple integrals.\vadjust{\goodbreak} Unfortunately,
due to
an inappropriate definition of the chaos of a Markov operator, this attempt
became rather involved and could not produce any of the expected
generalizations of the fourth moment criterion. Later on, in the same
spirit as \cite{le} (i.e., exploiting spectral theory of Markov operators
and Gamma-calculus), the authors of \cite{a-c-p} could produce a very
simple and fully transparent proof of the fourth moment theorem,
henceforth bringing both, a complete answer to question (C), and some
generalizations of the criterion for other Markov operators than
Ornstein--Uhlenbeck. Roughly speaking, the technique used in \cite{a-c-p} consists in exploiting the stability of chaoses under the
product operation to provide some suitable spectral
inequalities, which, after elementary computations, become moment
inequalities. In particular, the latter approach does not need any of
the combinatorics computations required by the product formula. The
present article fully generalizes this idea and builds a complete
methodology enabling to provide a wide range of inequalities for
polynomial functionals of a Gaussian field, hence for Wiener chaoses as well.
In particular, it leads to a partial answer to question (B). Combining
the formalism and the ideas of \cite{a-c-p} together with some fine
deterministic properties of Hermite polynomials, we could prove
Theorem~\ref{CLTFMquibourre}, which is our main achievement.
Interestingly, we could also prove the following quantitative version
which extends the celebrated estimate (\ref{quantification}) to all
even moments. Indeed, taking $k=2$ in the theorem below gives back the
bound in\break (\ref{quantification}).
%
%th1.4 #&#
\begin{thm}\label{quantificationdeouf}
Under the assumptions of Theorem~\ref{quantificationtheo}, for all
$k\ge2$, we have the following general quantitative bound:
%
%e1.2 #&#
\begin{equation}
\label{superquantification}
d_{\mathrm{TV}} \bigl(X,\mathcal{N}(0,1) \bigr)\le C_k
\sqrt{\frac{\E
[X^{2k} ]}{(2k-1)!!}-1},
\end{equation}
where the constant $C_k=\frac{4}{\sqrt{ 2k(k-1) \int_0^1 (({1+t^2})/{2})^{k-2} \,dt}}$.
\end{thm}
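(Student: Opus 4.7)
The bound~(\ref{superquantification}) contains Theorem~\ref{quantificationtheo} as its $k=2$ specialisation: one checks $C_{2}=4/\sqrt{4}=2$ and $\int_{0}^{1}1\,dt=1$, so~(\ref{superquantification}) at $k=2$ is $d_{\text{TV}}(X,N)\le 2\sqrt{\E[X^{4}]/3-1}=\tfrac{2}{\sqrt{3}}\sqrt{\E[X^{4}]-3}$, which is exactly~(\ref{quantification}). A natural strategy is therefore to \emph{reduce} the general-$k$ statement to Theorem~\ref{quantificationtheo} by establishing the moment-comparison inequality
\begin{equation*}
\E[X^{4}] - 3 \;\le\; \frac{6}{k(k-1)\,(2k-1)!!\,I_{k}}\,\bigl(\E[X^{2k}]-(2k-1)!!\bigr), \qquad I_{k}:=\int_{0}^{1}\!\left(\frac{1+t^{2}}{2}\right)^{\!k-2}\!dt,
\end{equation*}
for every normalised element $X$ of a fixed Wiener chaos of order $p\ge 2$. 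This inequality is tight at $k=2$ (both sides equal $\E[X^{4}]-3$), and substituting it into~(\ref{quantification}) yields~(\ref{superquantification}) by a direct algebraic check.

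To prove the comparison inequality without invoking the Wiener-It\^o product formula, I would work within the spectral and $\Gamma$-calculus framework of~\cite{a-c-p}. The starting point is the Taylor-with-integral-remainder identity
\begin{equation*}
y^{k} \;=\; 1 + k(y-1) + k(k-1)\int_{0}^{1}(1-s)\,\bigl(1+s(y-1)\bigr)^{k-2}(y-1)^{2}\,ds,\qquad y\ge 0,
\end{equation*}
applied at $y=X^{2}\ge 0$. Taking expectation kills the linear term since $\E[X^{2}]=1$, and $\E[X^{2k}]-1$ is thereby rewritten as an integral in $s\in[0,1]$ of the weighted expectation $\E\bigl[(1+s(X^{2}-1))^{k-2}(X^{2}-1)^{2}\bigr]$. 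A further Gaussian interpolation $X_{\tau}=\tau X+\sqrt{1-\tau^{2}}\,N'$, with $N'\sim\mathcal{N}(0,1)$ independent of $X$, should allow one to replace the random weight by a deterministic one through explicit Gaussian moment calculations in $\tau$; averaging over $\tau\in[0,1]$ is then expected to produce the integral $I_{k}$. The quadratic factor $(X^{2}-1)^{2}$ is finally converted into $\E[X^{4}]-3$ via the elementary identity $\E[(X^{2}-1)^{2}]=\E[X^{4}]-1$, valid under the normalisation $\E[X^{2}]=1$.

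The main obstacle is the sharp lower bound on $\E\bigl[(1+s(X^{2}-1))^{k-2}(X^{2}-1)^{2}\bigr]$ by a deterministic multiple of $\E[X^{4}]-3$. Since $X^{2}$ lies in $\bigoplus_{r=0}^{p}\mathcal{H}_{2r}$ and is non-constant, the random weight cannot be replaced by a constant without substantial loss; overcoming this requires expanding the weight in Hermite polynomials and carefully combining non-negativity of the resulting Hermite cross-moments for chaotic $X$, a positivity property which is itself a spectral consequence of the Ornstein-Uhlenbeck analysis carried out in the main body of the paper to prove Theorem~\ref{CLTFMquibourre}. Once this quantitative positivity is in hand, combining it with the identities above should deliver exactly the constant $C_{k}$.
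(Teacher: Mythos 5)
There is a genuine gap here. Your whole argument reduces to the unproven moment-comparison inequality
\begin{equation*}
\E[X^4]-3\;\le\;\frac{6}{k(k-1)\,(2k-1)!!\,I_k}\,\bigl(\E[X^{2k}]-(2k-1)!!\bigr),
\end{equation*}
and the ``main obstacle'' you flag at the end is precisely this missing step. The Taylor-plus-Gaussian-interpolation machinery you sketch does not obviously close it: the Taylor identity at $y=X^2$ rewrites $\E[X^{2k}]-1$, whereas the quantity to be controlled is $\E[X^{2k}]-(2k-1)!!$, so after subtracting the Gaussian value you are left needing a lower bound on the \emph{difference} between $\int_0^1(1-s)\,\E\bigl[(1+s(X^2-1))^{k-2}(X^2-1)^2\bigr]\,ds$ and the same functional evaluated at $N$, in terms of $\E[X^4]-3$. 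That is a two-sided comparison between $X$ and $N$, not a positivity statement, and the Hermite positivity results of the paper (Lemma \ref{main1}) only give one-sided inequalities $\E[W_i(X)]\ge 0$ pointing in the same direction as the inequality you want, so they cannot be chained into it. Your arithmetic check that the constants would match if the comparison inequality held is correct, but the inequality itself is not established, and it is not what the paper proves.

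The paper's route uses a different intermediary: it never compares $\E[X^4]-3$ with $\E[X^{2k}]-(2k-1)!!$, but controls everything through $\text{Var}(\Gamma[X])$. Remark \ref{mainrmk} (the form $\mathcal{M}_i$ dominates $P\mapsto\E[\{(\LL+ip\Id)P(X)\}^2]$), combined with $(\LL+ip\Id)H_i(X)=i(i-1)H_{i-2}(X)(\Gamma(X)-p)$, yields $\E[W_i(X)]\ge\frac{(2i-1)(i-1)}{p^2}\,\E[H_{i-2}(X)^2(\Gamma(X)-p)^2]$. Applying this to $T_k=\sum_{i=2}^k\alpha_{i,k}W_i$ from Proposition \ref{tenduduslip} and keeping only the $i=2$ term (where $H_0=1$) gives $\E[X^{2k}]-(2k-1)!!=\E[T_k(X)]\ge\frac{3\alpha_{2,k}}{p^2}\,\E[(\Gamma(X)-p)^2]$; the explicit value of $\alpha_{2,k}$ then produces exactly $C_k$ once one inserts $d_{\text{TV}}\le\frac2p\sqrt{\text{Var}(\Gamma[X])}$ from Theorem \ref{Gamma-bound-general}. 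If you want to salvage your reduction to Theorem \ref{quantificationtheo}, you would additionally need a reverse bound controlling $\E[X^4]-3$ from above by $\text{Var}(\Gamma[X])$; no such bound appears in the paper (Theorem \ref{Steindirichletfinal} only gives the opposite direction), and any version of it would carry $p$-dependent constants that spoil the stated $C_k$.
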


For the sake of clarity, we stated so far our main results in the more
familiar context of the Wiener space. Nevertheless, throughout the
whole article, instead of the Wiener--It\^{o} multiple integrals, we
shall consider a more general concept of eigenfunctions of a diffusive
Markov operator. We refer the reader to Section~\ref{assumptions}
for a precise exposition of our assumptions. We also stress
that this gain of generality enables us to give central limit criteria
in situations far beyond the scope of the usual criteria holding in the
Wiener chaoses.\vadjust{\eject}

%s2 #&#
\section{The setup}\label{setup}
%s2.1 #&#
\subsection{The general setup and assumptions \textup{(a)}--\textup{(b)}--\textup{(c)}}\label{assumptions}
One possible way to study the properties of the elements of the Wiener
space can be through the exploration of the spectral properties of the
so-called \textit{Ornstein--Uhlenbeck operator}. In order to
situate more precisely our purpose, we will restate below its main
properties.

The Wiener space is the space $L^2(E,\mu)$ where $E=\R^\N$ and $\mu$ is
the standard Gaussian measure on $\R^\N$. The Ornstein--Uhlenbeck
operator is the unbounded, symmetric, negative operator $\LL$ acting
on some dense domain of $L^2(E,\mu)$ and defined (on the set of smooth
enough cylindric functionals $\Phi$) by
\[
\LL [\Phi ]=\Delta\Phi- \vec{x}\cdot\vec{\nabla}\Phi
\]
(where $\Delta$ is the usual Laplacian and $\vec{\nabla}$ is the
gradient vector).
We shall denote its domain by $\mathcal{D}(\LL)$ and for a general $X\in
\mathcal{D}(\LL)$, $\LL [X ]$ is defined via standard closure
operations. The associated \textit{carr\'e-du-champ} operator is the
symmetric, positive, bilinear form defined by
\[
\Gamma[\Phi,\Psi]=\vec{\nabla}\Phi\cdot\vec{\nabla}\Psi.
\]

Below, we summarize the fundamental properties of the
Ornstein--Uhlenbeck operator $\LL$.
\begin{longlist}[(a)]
\item[(a)] \textit{Diffusion}: For any $\mathcal{C}^2_b$ function $\phi
\dvtx \R\to\R$, any $X \in\mathcal{D}(\LL)$, it holds that $\phi
(X)\in\mathcal{D}(\LL)$ and
%
%e2.1 #&#
\begin{equation}
\label{diff}
\LL \bigl[\phi(X) \bigr] = \phi'(X) \LL[X] +
\phi''(X) \Gamma[X,X].
\end{equation}
Note that, by taking $\phi=1 \in\mathcal{C}^2_b$, we get $\LL[1]=0$
which is the Markov property. Equivalently, $\Gamma$ is a derivation in
the sense that
\[
\Gamma \bigl[ \phi(X),X \bigr] = \phi'(X) \Gamma[X,X].
\]

\item[(b)] \textit{Spectral decomposition}: The operator $-\LL$ is
diagonalizable on $L^2(E,\mu)$ with $\mathbf{sp}(-\LL)=\N$, that is to say
\[
L^2(E,\mu)=\bigoplus_{i=0}^\infty
\K(\LL+i \Id).
\]
\item[(c)] \textit{Spectral stability}: For any pair of eigenfunctions
$(X,Y)$ of the operator $- \LL$ associated with eigenvalues $(p_1,p_2)$,
%
%e2.2 #&#
\begin{equation}
\label{fundamental-assumtionbis}
X Y\in\bigoplus_{i \le p_1+p_2 } \K ( \LL+ i \Id).
\end{equation}
\end{longlist}

We refer to \cite{BH} for a precise exposition as well as all the
domain and integrability assumptions. Actually, these three properties
are the only one we will use. Thus, we naturally define the following
class of structures for which our results will hold.

%de2.1 #&#
\begin{defn}
A (a)--(b)--(c) structure is a triplet $(E,\mu,\LL)$, with an associated
``carr\'e-du-champ'' operator $\Gamma$, where:
\begin{itemize}
\item $(E,\mu)$ is a probability space,
\item $\LL$ is a symmetric unbounded operator defined on some dense
domain of $L^2(E,\mu)$,
\item$\Gamma$ is
defined by
%
%e2.3 #&#
\begin{equation}
\label{Gamma}
2 \Gamma [X,Y ] = \LL [XY ] - X \LL [Y ] - Y \LL [X ],
\end{equation}
\end{itemize}
such that the aforementioned properties (a), (b) and (c) hold.
In this context, we will sometimes write $\Gamma[X]$ to denote $\Gamma
[X,X]$ and $\E$ for the integration against $\mu$.
\end{defn}

Property (a) is important regarding functional calculus. For instance,
we will use several times the following \textit{integration by parts
formula}: for any $X,Y$ in $\mathcal{D}(\LL)$ and
$\phi\in\mathcal{C}^2_b$:
%
%e2.4 #&#
\begin{equation}
\label{by-parts}
\E \bigl[\phi'(X)\Gamma [X,Y ] \bigr] = - \E \bigl[
\phi(X) \LL [Y ] \bigr] = -\E \bigl[Y \LL \bigl[\phi(X) \bigr] \bigr].
\end{equation}

Property (b) allows to use spectral theory. Actually, we stress that
our results extend under the weaker assumption that $\mathbf{sp}(-\LL
)\subset\R_+$ is simply discrete. However, we stick to the
assumption $\mathbf{sp}(-\LL)=\N$ since it encompasses the most common
cases (Wiener space and Laguerre space). The reader interested in
relaxing this spectral assumption can consult \cite{a-c-p} where the
spectrum is only assumed to be discrete.

Property (c) is our main assumption, which will allow us to obtain
fundamental spectral inequalities. A simple induction on (\ref{fundamental-assumtionbis}) shows that, for any $X\in\K(\LL+p\Id)$ and
any polynomial $P$ of degree $m$, we have
%
%e2.5 #&#
\begin{equation}
\label{fundamental-assumtion}
P(X) \in\bigoplus_{i \le mp } \K ( \LL+ i
\Id ).
\end{equation}
For further details on our setup, we refer to \cite{ba,b-g-l}. We also
refer to Section~\ref{examples-assumptions} for many other examples.

%re2.1 #&#
\begin{rmk}\label{Hyperconractivity}
We remark that under the assumptions (a)--(b)--(c), the eigenspaces are
hypercontractive (see \cite{ba} for sufficient conditions), that is,
for any integer $M$, we have that
%
%e2.6 #&#
\begin{equation}
\label{Hyper1}
\bigoplus_{i \le M} \K ( \LL+ i \Id )
\subseteq\bigcap_{p\ge1} L^{p}(E,\mu).
\end{equation}
Next, by using the open mapping theorem, we see that the embedding (\ref{Hyper1}) is continuous, that is, there exists a constant $C(M,k)$ such
that for any $X\in\bigoplus_{i \le M} \K ( \LL+ i \Id )$:
%
%e2.7 #&#
\begin{equation}
\label{Hyper2}
\E\bigl(X^{2k}\bigr)\leq C(M,k) \E\bigl(X^2
\bigr)^k.
\end{equation}
\end{rmk}

We close this subsection with an useful lemma which will be used
several times in the sequel. This lemma is proved in \cite{n-po-2}, Lemma~2.4,  in the Wiener structure but can be easily adapted to our
framework by taking into
account the Remark~\ref{Hyperconractivity}.

%le2.1 #&#
\begin{lma}\label{Hypercontract}
Let $\{ X_n\}_{n \ge1}$ be a sequence of random variables living in a
finite sum of eigenspaces\vspace*{1pt} $(X_n \in\bigoplus_{i \le M} \K ( \LL+
i \Id ),  \forall n\ge1)$ of a Markov generator $\LL$ such that
our assumptions \textup{(a)}, \textup{(b)}
and \textup{(c)} hold. Assume that the sequence $\{X_n\}_{n \ge1}$ converges in
distribution as $n$ tends to infinity. Then
\[
\sup_{n \ge1} \E\bigl(\vert X_n \vert^r
\bigr) < \infty \qquad  \forall r \ge1.
\]
\end{lma}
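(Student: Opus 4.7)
The plan is to reduce everything to a uniform bound on the second moment and then invoke the hypercontractivity estimate (\ref{Hyper2}) already recorded in Remark \ref{Hyperconractivity}. Concretely, inequality (\ref{Hyper2}) gives a constant $C(M,k)$, depending only on $M$ and $k$ (and not on the particular element of $\bigoplus_{i\le M}\K(\LL+i\Id)$), such that
\begin{equation*}
\E[X_n^{2k}]\le C(M,k)\,\E[X_n^2]^{k}
\end{equation*}
for every $n$. Consequently, by Cauchy--Schwarz (to absorb odd moments into even ones), the whole conclusion $\sup_n \E[|X_n|^r]<\infty$ for every $r\ge 1$ will follow at once as soon as we have established the single bound $\sup_n \E[X_n^2]<\infty$. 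So the heart of the matter is really this one scalar estimate.

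To obtain it, I would argue by contradiction and use a rescaling trick. Suppose $\sup_n \E[X_n^2]=+\infty$; then up to passing to a subsequence we may assume $\sigma_n^2:=\E[X_n^2]\to\infty$. Define the normalized variables $Y_n:=X_n/\sigma_n$. By linearity, each $Y_n$ still belongs to the same finite-dimensional (in the spectral sense) subspace $\bigoplus_{i\le M}\K(\LL+i\Id)$ and satisfies $\E[Y_n^2]=1$. Applying (\ref{Hyper2}) again, now to $Y_n$, yields $\E[Y_n^{4}]\le C(M,2)$, so the family $\{Y_n^2\}$ is uniformly integrable.

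On the other hand, since $\{X_n\}$ converges in distribution it is tight, hence bounded in probability; dividing by $\sigma_n\to\infty$ gives $Y_n\to 0$ in probability, and thus $Y_n^2\to 0$ in probability. Combined with the uniform integrability of $\{Y_n^2\}$, this forces $\E[Y_n^2]\to 0$, which contradicts $\E[Y_n^2]=1$. Therefore $\sup_n \E[X_n^2]<\infty$, and plugging this back into (\ref{Hyper2}) yields the desired uniform bound on every moment.

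The main (minor) obstacle is to make sure that the constant in the hypercontractive estimate is genuinely uniform over the subspace $\bigoplus_{i\le M}\K(\LL+i\Id)$ when we apply it to the rescaled sequence $Y_n$; this is exactly the content of the open mapping argument sketched in Remark \ref{Hyperconractivity}, so no further work is needed. Everything else is a soft probabilistic argument combining tightness (from convergence in distribution) with uniform integrability (from $L^4$-control).
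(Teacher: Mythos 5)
Your proof is correct and follows essentially the same route as the paper's (which simply cites Nourdin--Poly, Lemma 2.4, adapted via Remark \ref{Hyperconractivity}): hypercontractivity reduces everything to a uniform second-moment bound, which is then extracted from tightness. The only cosmetic difference is that you reach the contradiction via rescaling and uniform integrability where the cited argument uses a Paley--Zygmund inequality; both rest on the same $L^4$--$L^2$ comparison supplied by (\ref{Hyper2}).
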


%s2.2 #&#
\subsection{Examples of structures fulfilling assumptions
\textup{(a)}--\textup{(b)}--\textup{(c)}}\label{examples-assumptions}

We refer to the article \cite{a-c-p} for a proof of the validity of the
assumptions (a)--(b)--(c) in the cases of the Wiener and Laguerre
structures. We now show how the validity of the assumptions (a)--(b)--(c)
is preserved by the elementary operations of tensorization and
superposition of structures. This simple fact will allow us to produce
many structures in which our results hold.

\subsubsection*{Tensorization} Let $(E_1,\mu_1,\LL_1)$ and
$(E_2,\mu_2,\LL_2)$ be two Markov triplets fulfilling assumptions
(a)--(b)--(c). On the product space $E_1\times E_2$ with measure
$\mu_1\otimes\mu_2$, we define the following operator $\LL_3$. For $\Psi
\dvtx E_1\times E_2\rightarrow\R$, we set $\Psi_x(y)=\Psi_y(x)=\Psi(x,y)$,
and we define
%
%e2.8 #&#
\begin{equation}
\label{tensor}
\LL_3[\Psi](x,y)=\LL_1[
\Psi_y](x)+\LL_2[\Psi_x](y).
\end{equation}
In (\ref{tensor}), $\LL_3$ is defined on the set of maps $\Psi$ such that:
\begin{longlist}[(2)]
\item[(1)] $\mu_2$-a.s., $\Psi_y\in\operatorname{Dom}(\LL_1)$ and $\mu
_1$-a.s., $\Psi_x\in\operatorname{Dom}(\LL_2)$,
\item[(2)]
\[
\int_{E_1\times E_2} \bigl(\LL_3[\Psi](x,y)
\bigr)^2\,\ud\mu_1 \,\ud\mu _2<\infty.
\]
\end{longlist}
We claim that the triplet $(E_1\times E_2,\mu_1\otimes\mu_2,\LL_3)$
verifies assumptions (a)--(b)--(c). First, it is well known that this
procedure preserves assumption (a); see, for instance, \cite{BH}.
Assumption (b) is also preserved by tensorization taking into account that
%
%e2.9 #&#
\begin{eqnarray}
 \K(\LL_3+k\Id)
& =&  \operatorname{Vect} \bigl\{\phi(x)
\psi(y) |\phi\in\K(\LL _1+k_1\Id), \nonumber\\[-8pt]
\label{chaos-tensor}
 \\[-8pt]
 \nonumber
&& \hspace*{29pt}\qquad\qquad
\psi\in\K(\LL_2+k_2\Id), k_1+k_2=k\bigr\}.
\end{eqnarray}
Finally, we check assumption (c) for $\LL_3$. Let $\Psi_1=\phi_1(x)\psi
_1(y)$ with $\phi_1\in\K(\LL+k_1\Id)$ and $\psi_1\in\K(\LL+k_2\Id)$,
and let $\Psi_2=\phi_2(x)\psi_2(y)$ where $\phi_2\in\K(\LL+k_3\Id)$
and $\psi_2\in\K(\LL+k_4\Id)$. By applying (c) to $\phi_1 \phi_2$ and
$\psi_1 \psi_2$ together with equation (\ref{chaos-tensor}), we infer that
\[
\phi_1(x)\phi_2(x)\psi_1(y)
\psi_2(y)\in\bigoplus_{i\leq
k_1+k_2+k_3+k_4}\K(
\LL_3+i\Id).
\]
Hence, using bilinearity, we see that assumption (c) also holds for
operator $\LL_3$.
\subsubsection*{Superposition}
As before, we are given a Markov
triplet $(E,\LL,\mu)$ satisfying assumptions (a)--(b)--(c). The
superposition procedure consists in adding an independent noise to
$(E,\LL,\mu)$. To do so, we consider a generic probability space
$(\Omega,\mathcal{F},\P)$, which will induce the noise on $(E,\LL,\mu
)$. We define on the set $E\times\Omega$ equipped with the product
probability measure $\mu\otimes\P$:
%
%e2.10 #&#
%e2.11 #&#
\begin{eqnarray}
\label{newdomainmixture}
\quad\qquad\operatorname{Dom}(\LL_\Omega) &=&  \biggl\{\Psi(x,\omega) \Big|
\Psi_\omega\in\mbox {Dom}(\LL), \int_\Omega
\E_\mu \bigl[ (\LL\Psi_\omega )^2 \bigr] \,\ud\P<
\infty \biggr\},
\\
\label{newgeneratormixture}
\LL_\Omega[\Psi](x,\omega)&:=& \LL[\Psi_\omega](x)\qquad
\forall \Psi \in\operatorname{Dom}(\LL_\Omega).
\end{eqnarray}
Preservation of assumption (a) is a well-known consequence of the
superposition procedure. We refer to \cite{BH} where
superposition/product/semidirect product of Markov triplets (i.e.,
Dirichlet forms)
are studied to provide ways of constructing Dirichlet forms. To check
assumption (b), we are given $\Psi(x,\omega)\in L^2(\mu\otimes\P)$. By
assumption (b) on the space $L^2(E,\mu)$, we get
%
%e2.12 #&#
\begin{equation}
\Psi_{\omega}(x)=\sum_{k=1}^\infty
f_{k,\omega}(x), \qquad f_{k,\omega}\in\K (\LL+k\Id).
\end{equation}
Besides,
\[
\int_\Omega\E_\mu \bigl[\Psi(x,
\omega)^2 \bigr]\,\ud\P=\sum_{k=1}^\infty
\int_\Omega\E_\mu \bigl[f_{k}(x,
\omega)^2 \bigr]\,\ud\P<\infty.
\]
This ensures that $\P$-a.s., $f_{k,\omega}\in\K(\LL+k\Id)$ and
that $f_{k}\in\operatorname{Dom}(\LL_\Omega)$. Finally, one can see that
%
%e2.13 #&#
\begin{equation}
\label{newchaosmixt}
\hspace*{9pt}\quad\K(\LL_\Omega+k\Id)= \bigl\{\Psi(x,\omega)\in\operatorname{Dom}(
\LL_\Omega)| \P\mbox{-a.s. } \Psi_\omega\in\K(\LL+k\Id) \bigr\}.
\end{equation}
We infer that $f_k\in\K(\LL_\Omega+k\Id)$ which achieves the proof of
(b). Strictly speaking, assumption (c) is not necessarily preserved
because we need integrability on the product of two eigenfunctions of
$\LL_\Omega$. This integrability, unlike in the tensorization procedure
is not automatically fulfilled in the superposition procedure.
Fortunately, under some slight additional assumption, (c) holds for $\LL
_\Omega$.
More precisely, we have for all $X(x,\omega)\in\K(\LL_\Omega
+k_1\Id)$ and $Y(x,\omega)\in\K(\LL_\Omega+k_2\Id)$ such that $X Y \in
L^2(\mu\otimes\P)$:
%
%e2.14 #&#
\begin{equation}
\label{c-mixt}
X Y\in\bigoplus_{i\leq k_1+k_2}\K(
\LL_\Omega+i\Id).
\end{equation}

%re2.2 #&#
\begin{rmk}
One can consult the reference \cite{b-g-l}, page 515,  to see that the two
aforementioned operations are a particular case of the so-called
wrapped product of symmetric diffusive operators.
\end{rmk}

%\begin{exm}
%Let us illustrate the efficiency of theses two procedures, to
%construct structures with properties (a)-(b)-(c). First we infer that
%both Wiener structure and Laguerre structure satisfy the required
%assumptions (a)-(b)-(c), we refer to \cite{a-c-p} for details. Let $
%\{T_i\}_{i\ge1}$ be \textbf{any} sequence of random variables (not
%necessarily independent nor identically distributed) defined on some
%probability space $(\Omega,\mathcal{F},\P)$. Let $\{X_i\}_{i\ge1}$ be
%an i.i.d. sequence of $\mathcal{N}(0,1)$-distributed random variables
%defined on the first chaos of the Wiener structure, say, $(E,\LL,
%\mu)$. Finally let
%$$F_n=\sum_{i_1<i_2\cdots<i_d}\alpha(i_1,\cdots,i_d)T_{i_1}\cdots
%T_{i_d}X_{i_1}\cdots X_{i_d}$$
% be a multi linear polynomial of degree $d$ in the variables $\{T_i X_i
%\}_{i\ge1}$. We assume that $\int_\Omega\E\left[F_n^2\right]d\P\to1$
%and $\int_\Omega\E\left[F_n^4\right]d\P\to3$ then under the
%probability $\mu\otimes\P$ we have
%$$F_n\xrightarrow[n\to\infty]{\mathrm{law}}~\mathcal{N}(0,1).$$
%The same holds, if, for instance, $\int_\Omega\E\left[F_n^4\right]d\P
%\to3$ and $\int_\Omega\E\left[F_n^8\right]d\P\to105$ as well as any
%result contained in the present article. The proof relies on the
%superposition procedure by taking into account that $F_n\in\K(\LL_
%\Omega+d\Id)$.
%\end{exm}

%s2.3 #&#
\subsection{Some auxiliary results}\label{knownfacts}

To be self-contained, we restate here two well-known facts about
Stein's method applied to eigenfunctions of a diffusive Markov
operator. For more details, the reader can consult, for instance, \cite
{le} or the survey~\cite{CP14}.

%\begin{thm}[Stein]
%Let $X,T$ be two integrable random variables such that $\E[X^2]=1$ and
%such that for any function $f\in\mathcal{C}^1_b(\R)$ we have:
%\begin{equation}\label{IPPS}
%\E\left[f'(X)T\right]=\E\left[X f (X)\right].
%\end{equation}
%Then it holds that
%\begin{equation}\label{Steinbound}
%d_{\mathrm{TV}}\left(X,\mathcal{N}(0,1)\right)\le2 \sqrt{\operatorname{Var}
%\left(T\left|\right. X\right)}.
%\end{equation}
% \end{thm}

%Now, let us translate the Malliavin Stein's method in our context. Let
%us be placed under (a)-(b)-(c). Take $X\in L^2$, with $\E[X]=0$. Then
%$X$ can be expanded in the sum of the eigenspaces of $\LL$:

%$$X=\sum_{k=1}^\infty Y_k,  Y_k\in\K(\LL+k\Id).$$
%You can define (as the serie converges in $L^2$):
%$$-\LL^{-1} X= \sum_{k=1}^\infty\frac{Y_k}{k}.$$
%Besides, the next integration by parts holds

%$$\E\left[\phi'(X)\Gamma[X,-\LL^{-1}X]\right]=\E[X\phi(X)].$$
%And one can use the bound (\ref{Steinbound}) to write
%$$d_{\mathrm{TV}}\left(X,\mathcal{N}(0,1)\right)\le2 \sqrt{\operatorname{Var}
%\left(\Gamma[X,-\LL^{-1}X]\left|\right. X\right)}\le2 \sqrt{\operatorname{Var}
%\left(\Gamma[X,-\LL^{-1}X]\right)}.
%$$
%Of course, when $X$ is an eigenfunction of $\LL$ associated with the
%eigenvalue $p$, this becomes
%\begin{equation}\label{Steindirichlet}
%d_{\mathrm{TV}}\left(X,\mathcal{N}(0,1)\right)\le2\sqrt{\operatorname{Var}\left(
%\frac{1}{p}\Gamma[X,X]\right)}
%\end{equation}
%
%th2.1 #&#
\begin{thm}[(\cite{le})]\label{Gamma-bound-general}
Let $\LL$ be a Markov diffusive operator satisfying the assumptions
\textup{(a)}--\textup{(b)} of Section~\ref{assumptions}, and $X$ be in $\K(\LL+p\Id)$
such that \mbox{$\E[X^2]=1$}. Then
\[
d_{\mathrm{TV}} \bigl(X,\mathcal{N}(0,1) \bigr)\le\frac{2}{p}\sqrt{\operatorname{Var} \bigl(\Gamma[X] \bigr)}.
\]
As a matter of fact, for a given sequence $\{X_n\}_{n \ge1}$ in $\K(\LL
+p\Id)$ such that $\E[X_n^2]\to1$:
\[
\Gamma[X_n]\stackrel{L^2}{\rightarrow}p \quad \Rightarrow \quad
X_n \stackrel {\mathit{law}} {\rightarrow}\mathcal{N}(0,1).
\]
\end{thm}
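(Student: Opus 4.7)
The natural approach is the now-classical combination of Stein's method with the integration by parts formula from property (a). The plan is to bound the total variation distance via a Stein test-function representation, then convert $\E[Xf(X)]$ into an expression involving $\Gamma[X]$ using the fact that $X$ is an eigenfunction of $-\LL$ with eigenvalue $p$.

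First I would recall that, by Stein's lemma, the total variation distance to $\mathcal{N}(0,1)$ satisfies
\begin{equation*}
d_{\text{TV}}(X,\mathcal{N}(0,1)) \le \sup_{f\in\mathcal{F}_{\text{St}}} \bigl|\E[f'(X) - X f(X)]\bigr|,
\end{equation*}
where $\mathcal{F}_{\text{St}}$ denotes the family of solutions to the Stein equation $f'(x) - xf(x) = \mathbf{1}_A(x) - \P(N\in A)$, which are known to satisfy $\|f'\|_\infty \le 2$ (with possibly a mollification step to ensure $f$ is smooth enough to apply (\ref{by-parts})). Because $X\in\K(\LL+p\Id)$, we have $\LL[X] = -pX$, so the integration by parts formula (\ref{by-parts}) yields
\begin{equation*}
\E[Xf(X)] = -\frac{1}{p}\E[f(X)\LL[X]] = \frac{1}{p}\E[f'(X)\Gamma[X]].
\end{equation*}

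Combining these two displays gives
\begin{equation*}
\E[f'(X) - Xf(X)] = \E\!\left[f'(X)\Bigl(1 - \tfrac{1}{p}\Gamma[X]\Bigr)\right],
\end{equation*}
so the bound $\|f'\|_\infty \le 2$ together with Cauchy--Schwarz produces
\begin{equation*}
\bigl|\E[f'(X) - Xf(X)]\bigr| \le 2\,\E\!\left|1 - \tfrac{1}{p}\Gamma[X]\right| \le \frac{2}{p}\sqrt{\E\bigl[(p-\Gamma[X])^2\bigr]}.
\end{equation*}
To identify the right-hand side as a standard deviation, I would compute $\E[\Gamma[X]]$: applying (\ref{by-parts}) again, $\E[\Gamma[X]] = -\E[X\LL[X]] = p\,\E[X^2] = p$, so $\E[(p-\Gamma[X])^2] = \text{Var}(\Gamma[X])$, which proves the quantitative inequality.

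For the qualitative consequence on sequences $\{X_n\}_{n\ge 1}$ with $\E[X_n^2]\to 1$, I would normalize, setting $\widetilde{X}_n := X_n/\sqrt{\E[X_n^2]}\in\K(\LL+p\Id)$, apply the inequality just proved to $\widetilde X_n$, and use $\Gamma[\widetilde X_n] = \Gamma[X_n]/\E[X_n^2] \to p$ in $L^2$ to conclude $d_{\text{TV}}(\widetilde X_n,\mathcal{N}(0,1))\to 0$; then $\widetilde X_n - X_n\to 0$ in $L^2$ (hence in law) finishes the argument. The only technical subtlety I expect is the smoothness of the Stein solutions: they are typically Lipschitz rather than $\mathcal{C}^2_b$, so (\ref{by-parts}) must either be extended by approximation, or invoked via a reference such as \cite{le} where the required closure is already carried out in the (a)-(b)-(c) framework.
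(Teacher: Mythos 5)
Your proof is correct and follows essentially the same route as the source the paper cites for this statement (the paper gives no proof of its own, deferring to \cite{le} and \cite{CP14}): Stein's method for total variation combined with the integration by parts formula (\ref{by-parts}) applied to the eigenfunction relation $\LL[X]=-pX$, together with the observation that $\E\left[\Gamma[X]\right]=p\,\E[X^2]=p$ so that the $L^2$ norm of $p-\Gamma[X]$ is exactly $\sqrt{\text{Var}\left(\Gamma[X]\right)}$. The normalization step for the sequence statement and the caveat about mollifying the Stein solutions before invoking (\ref{by-parts}) are both appropriately handled.
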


%re2.3 #&#
\begin{rmk}\label{gamma-criterion}
In \cite{le}, Proposition~2, given a sequence $\{X_n\}_{n \ge1}$ in $\K
(\LL+p\Id)$ with $\E[X_n^2]\to\theta$, it is shown that
\[
\operatorname{Var} \bigl( \Gamma[X_n] - p X_n \bigr) \to0
\quad \Rightarrow\quad  X_n+\theta\stackrel{\mathrm{law}} {\rightarrow}\gamma(
\theta),
\]
where $\gamma(\theta)$ stands for the gamma distribution of parameter
$\theta$. This fact will be used only in the proof of Theorem~\ref{unify-criteria}.
\end{rmk}

Furthermore, we restate below the fourth moment theorem under the
assumptions (a)--(b)--(c). Actually, it can be proved under the weaker
assumption that, for any eigenfunction $X\in\K(\LL+p\Id)$, we have
\[
X^2\in\bigoplus_{k\le2p} \K(\LL+k\Id),
\]
which in fact is a very particular case of the assumption (c). The
stronger assumption (c) will allow us to establish analogous
statements for higher moments.
%
%th2.2 #&#
\begin{thm}[(\cite{le,CP14})]\label{Steindirichletfinal}
Let $\LL$ be a Markov diffusive operator satisfying the assumptions
\textup{(a)}--\textup{(b)}--\textup{(c)} and $X\in\K(\LL+p\Id)$ with $\E[X^2]=1$. Then
%
%e2.15 #&#
\begin{equation}
d_{\mathrm{TV}} \bigl(X,\mathcal{N}(0,1) \bigr)\le\frac{2}{p} \sqrt{\operatorname{Var}
\bigl(\Gamma[X] \bigr)}\le\frac{2}{\sqrt{3}} \sqrt{\E\bigl[X^4
\bigr]-3}.
\end{equation}
Thus, for a given sequence $\{X_n\}_{n\ge1}$ in $\K(\LL+p\Id)$ such
that $\E[X_n^2]\to1$ and $\E[X_n^4] \to3$, we have
\[
X_n \stackrel{\mathit{law}} {\rightarrow} \mathcal{N}(0,1).
\]
\end{thm}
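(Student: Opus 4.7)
Since the first inequality is exactly Theorem \ref{Gamma-bound-general}, the plan is to establish the second inequality, namely $\mathrm{Var}(\Gamma[X])\le \tfrac{p^2}{3}\bigl(\E[X^4]-3\bigr)$, by combining the carr\'e-du-champ identity, the chaos truncation provided by assumption (c), and a single integration by parts. The central convergence statement then drops out of Theorem \ref{Gamma-bound-general}.

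First, from $\LL[X]=-pX$ and the definition (\ref{Gamma}) of $\Gamma$, one immediately gets $\Gamma[X]=\tfrac12 \LL[X^2]+pX^2$. Applying (\ref{fundamental-assumtion}) to the polynomial $t\mapsto t^2$ yields a \emph{finite} spectral expansion $X^2=\sum_{k=0}^{2p}Y_k$ with $Y_k\in\K(\LL+k\Id)$; since (a) forces $\K(\LL)=\R$, one has $Y_0=\E[X^2]=1$. Substituting and invoking orthogonality of the eigenspaces, I obtain the three basic identities
\begin{equation*}
\Gamma[X]=\sum_{k=0}^{2p}\Bigl(p-\tfrac{k}{2}\Bigr)Y_k,\quad \mathrm{Var}(\Gamma[X])=\sum_{k=1}^{2p}\Bigl(p-\tfrac{k}{2}\Bigr)^2\E[Y_k^2],\quad \E[X^4]=1+\sum_{k=1}^{2p}\E[Y_k^2].
\end{equation*}

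To close the system I use the integration-by-parts formula (\ref{by-parts}) with $\phi(t)=t^3$ and $Y=X$, which gives $p\E[X^4]=3\E[X^2\Gamma[X]]$. Expanding the right-hand side via the spectral decomposition above and using $\E[X^2 Y_k]=\E[Y_k^2]$ for $k\ge 1$, this furnishes the additional linear relation $\sum_{k=1}^{2p} k\,\E[Y_k^2]=\tfrac{4p}{3}\E[X^4]$.

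Setting $a_k:=\E[Y_k^2]\ge 0$ and substituting all four relations into the target inequality, a short algebraic manipulation reduces the bound to
\begin{equation*}
\sum_{k=1}^{2p} k(2p-k)\,a_k \ge 0,
\end{equation*}
which is trivially non-negative term by term. The only genuinely delicate point, rather than a computational obstacle, is the appeal to assumption (c) guaranteeing that the spectral expansion of $X^2$ is supported in $\{0,\dots,2p\}$; this is precisely what forces the factor $2p-k$ to be non-negative and makes the reduction to a sum of non-negative terms go through. I would also briefly note that the finite-chaos decomposition together with hypercontractivity (Remark \ref{Hyperconractivity}) justifies all integrability statements used in the integration by parts.
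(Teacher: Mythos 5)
Your proof is correct. All the ingredients check out: $\Gamma[X]=\tfrac12\LL[X^2]+pX^2=\sum_{k=0}^{2p}(p-\tfrac k2)Y_k$, the orthogonality identities for $\mathrm{Var}(\Gamma[X])$ and $\E[X^4]$, the integration-by-parts relation $3\E[X^2\Gamma[X]]=p\E[X^4]$ giving $\sum_{k\ge1}k\,\E[Y_k^2]=\tfrac{4p}{3}\E[X^4]$, and the final reduction: one finds that $\tfrac{p^2}{3}(\E[X^4]-3)-\mathrm{Var}(\Gamma[X])=\tfrac14\sum_{k=1}^{2p}k(2p-k)\E[Y_k^2]\ge0$, with assumption (c) indeed being exactly what keeps the factors $2p-k$ non-negative. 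Note that the paper does not actually prove this theorem (it is quoted from \cite{le,CP14}), so there is no internal proof to compare against; but your argument is precisely the concrete, $k=2$ instance of the paper's own machinery. Indeed, Remark \ref{mainrmk} applied to $P=H_2$, together with the identity $(\LL+2p\Id)H_2(X)=2(\Gamma(X)-p)$ from \eqref{computation1} and the evaluation $\mathcal{M}_2(H_2)=\tfrac{2p}{3}\E[W_2(X)]=\tfrac{2p}{3}(\E[X^4]-3)$ from Lemma \ref{main1}, gives $4\,\mathrm{Var}(\Gamma[X])\le 2p\,\mathcal{M}_2(H_2)$, which is the same bound reached by the abstract route; your version simply unwinds the projections $J_k(X^2)=Y_k$ by hand. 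Two minor points you should make explicit: the identification $Y_0=\E[X^2]$ uses that $\K(\LL)$ reduces to the constants (harmless here, since even if $Y_0$ had positive variance its contribution cancels in the final computation, as the whole identity only ever uses $\sum_k\E[Y_k^2]=\E[X^4]$); and for the sequence statement one should either record the homogeneous form $\mathrm{Var}(\Gamma[X])\le\tfrac{p^2}{3}\bigl(\E[X^4]-3\E[X^2]^2\bigr)$ or normalize $X_n$ before invoking Theorem \ref{Gamma-bound-general}, since $\E[X_n^2]$ is only assumed to tend to $1$.
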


%s3 #&#
\section{Algebraic framework}\label{algebraic}

The aforementioned assumptions (a)--(b)--(c) on the Markov generator $\LL
$ can be suitably used to build an algebraic framework in order to
study properties of eigenfunctions of the generator $\LL$. Throughout
this section, we shall use these assumptions in a natural way in order
to introduce a family of bilinear, symmetric and positive forms
$\mathcal{M}_k$. The fundamental assumption
(\ref{fundamental-assumtion}) is the crucial element yielding the
positivity of the bilinear forms $\mathcal{M}_k$.

Let $\R_{k}[T]$ stand for the ring of all polynomials of $T$ of degree
at most $k$ over~$\R$. Let $X$ be an eigenfunction of the generator $\LL
$ with eigenvalue $-p$, that is, $- \LL X = p X$. We consider
the following map:
\[
%\label{bilinearM}
\mathcal{M}_k \dvtx  \cases{
\ds
\R_{k}[T] \times\R_{k}[T] \longrightarrow \R,
\vspace*{3pt}\cr
\ds (P,Q) \longmapsto   \E \bigl[ Q(X) (\LL+ k p \Id) P(X) \bigr].}
\]

%re3.1 #&#
\begin{rmk}
Notice that the mapping $\mathcal{M}_k$ strongly depends on the
eigenfunction $X$. We also remark that thanks to Remark~\ref{Hyperconractivity}, $\mathcal{M}_k$ is well defined.
\end{rmk}

The following theorem is the cornerstone of our approach.

%th3.1 #&#
\begin{thm}\label{thm:matrix}
The mapping $\mathcal{M}_k$ is bilinear, symmetric and nonnegative.
Moreover, its matrix representation over the canonical basis $\{ 1, T,
T^2, \break \ldots, T^k \}$ is given by $p \mathbf{M}_{k}$\vspace*{-2pt} where
%
%e3.1 #&#
\begin{equation}
\label{moment-matrix}
\mathbf{M}_{k} =
\biggl(\biggl(k - \frac{ij}{i+j-1}\biggr) \E\bigl[X^{i+j}\bigr]
\biggr)_{0 \le i,j \le k}
\end{equation}
with the convention that $\frac{ij}{i+j-1}=0$ for $(i,j)=(0,1)$ or $(1,0)$.
\end{thm}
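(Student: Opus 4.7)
The plan is to verify the four claims (bilinearity, symmetry, non-negativity, matrix formula) in that order, each relying on one of the structural properties (a)--(c).

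Bilinearity is immediate from the linearity of $\LL$ and of the expectation, so no work is needed there. Symmetry reduces to $\E[Q(X) \LL P(X)] = \E[P(X) \LL Q(X)]$, which I would derive directly from the integration by parts identity (\ref{by-parts}) (applied twice, or equivalently from the self-adjointness of $\LL$ on its domain). The hypercontractivity observation of Remark~\ref{Hyperconractivity} guarantees that $P(X), Q(X), \LL P(X), \LL Q(X)$ all lie in $L^{2}(E,\mu)$, so every bracket in sight is well defined.

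For non-negativity, I would invoke assumption (c) in its iterated form (\ref{fundamental-assumtion}): since $X \in \K(\LL + p\Id)$ and $P$ has degree at most $k$, the variable $P(X)$ decomposes as $P(X) = \sum_{i=0}^{kp} Y_i$ with $Y_i \in \K(\LL + i\Id)$. The eigenspaces being pairwise orthogonal, a direct computation gives
\begin{equation*}
\mathcal{M}_k(P,P) = \E\!\left[P(X)(\LL + kp\,\Id)P(X)\right] = \sum_{i=0}^{kp} (kp-i)\,\E[Y_i^2] \ge 0,
\end{equation*}
where the nonnegativity is the content of the inequality $i \le kp$ coming from (\ref{fundamental-assumtion}). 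This is really the heart of the theorem, and the step where assumption (c) is used in an essential way.

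Finally, for the explicit matrix, I would evaluate $\mathcal{M}_k(T^i,T^j) = \E[X^j(\LL + kp\Id)X^i]$ by first applying the diffusion property (\ref{diff}) to $X^i$: since $\LL X = -pX$, one gets $\LL X^i = -ip X^i + i(i-1) X^{i-2}\Gamma[X]$, and hence $(\LL + kp\Id)X^i = p(k-i) X^i + i(i-1) X^{i-2}\Gamma[X]$. Multiplying by $X^j$ and taking expectations, the first piece contributes $p(k-i)\E[X^{i+j}]$, while the $\Gamma$-term is handled by the integration by parts formula (\ref{by-parts}) with $\phi(X) = X^{i+j-1}/(i+j-1)$ and $Y=X$, yielding $\E[X^{i+j-2}\Gamma[X]] = \frac{p}{i+j-1}\E[X^{i+j}]$. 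A one-line algebraic simplification then collapses $p(k-i) + \frac{p\,i(i-1)}{i+j-1}$ into $p\bigl(k - \frac{ij}{i+j-1}\bigr)$, matching (\ref{moment-matrix}). The corner cases $(i,j) \in \{(0,0),(0,1),(1,0)\}$ have to be checked separately to justify the stated convention; they are consistent because $\E[X]=0$ for any eigenfunction with nonzero eigenvalue. I do not anticipate any real obstacle beyond organizing these manipulations and making sure all integrability issues are discharged by Remark~\ref{Hyperconractivity}.
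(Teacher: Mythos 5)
Your proof is correct and, in the bilinearity, symmetry and matrix-computation steps, follows the paper's own argument essentially verbatim: the evaluation of $\mathcal{M}_k(T^i,T^j)$ via the diffusion property and the integration by parts formula (\ref{by-parts}) is exactly the computation in the paper, and your observation that the corner cases of the convention rest on $\E[X]=0$ is a point the paper leaves implicit. The one place where you genuinely diverge is non-negativity: you diagonalize directly, writing $\mathcal{M}_k(P,P)=\sum_{i=0}^{kp}(kp-i)\E[Y_i^2]\ge 0$ from the decomposition (\ref{fundamental-assumtion}) and orthogonality of the eigenspaces, whereas the paper expands $\E\big[\big((\LL+kp\Id)P(X)\big)^2\big]$ and deduces the stronger inequality $kp\,\mathcal{M}_k(P,P)\ge \E\big[\big((\LL+kp\Id)P(X)\big)^2\big]$. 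Both arguments use the same key input --- assumption (c) forces the spectrum of $P(X)$ to sit below $kp$ --- and yours is marginally cleaner for the bare statement of the theorem; but the paper's version is precisely what records the quantitative domination of Remark \ref{mainrmk}, which is invoked repeatedly later (in Theorems \ref{unify-criteria}, \ref{last-thm} and \ref{thm:main1}). If one adopted your proof, that remark would need the extra one-line observation that $(kp-i)^2\le kp(kp-i)$ for $0\le i\le kp$, which your decomposition in fact delivers immediately.
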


\begin{pf}
Expectation is a linear operator, so the bilinearity property follows.
Symmetry proceeds from the symmetry of the diffusive generator $\LL$.
To prove positivity of the matrix $\mathbf{M}_k$, using the
fundamental assumption (\ref{fundamental-assumtion}) we obtain that
for any polynomial $P$ of degree $\le k$,
\[
P(X)\in\bigoplus_{i \leq k p}\K(\LL+ i \Id).
\]
Therefore, denoting by $J_i\dvtx L^2(E,\mu)\rightarrow\K ( \LL+ i \Id
 )$ the orthogonal projections,
%
%e3.2 #&#
\begin{eqnarray}
\E\bigl[ \bigl((\LL+kp\Id)P(X) \bigr)^2
\bigr]&=& \E\bigl[\LL P(X) (\LL+kp\Id)P(X)\bigr]
\nonumber\\[-2pt]
&&{}+kp \E\bigl[P(X) (\LL+kp\Id)P(X)\bigr]
\nonumber\\[-2pt]
\label{Main-ineq}
& = &\sum_{i=0}^{kp} (- i) (kp - i) \E
\bigl[J_{i}^{2}\bigl(P(X)\bigr)\bigr]
\\[-2pt]
&&{}+ kp \E\bigl[P(X) (\LL+kp\Id)P(X)\bigr]
\nonumber\\[-2pt]
\nonumber
&\le&  kp  \mathcal{M}_k(P,P).
\end{eqnarray}
Hence, $\mathcal{M}_k$ is a positive form. To complete the proof,
notice that the $(i,j)$-component of the matrix $\mathbf{M}_k$ is given
by $\E [ X^j (\LL+ kp \Id) X^i  ]$. So, using the diffusive
property of the generator $\LL$, we obtain
\begin{eqnarray*}
%\label{combination3}
\nonumber
X^j (\LL+ kp \Id) X^i
& =& i (i-1) X^{i+j-2} \Gamma(X) + p (k -i) X^{i+j}
\\
&= &\frac{i (i-1)}{i+j -1} \Gamma\bigl(X^{i+j-1}, X\bigr) + p (k -i)
X^{i+j}. %
\end{eqnarray*}
Therefore,
\begin{eqnarray*}
%\label{combination4}
%\nonumber
%
\mathcal{M}_k\bigl(X^i,X^j
\bigr)&= &\frac{i (i-1)}{i+j -1} \E\bigl[\Gamma\bigl(X^{i+j-1}, X\bigr)\bigr] + p
(k -i) \E\bigl[X^{i+j}\bigr]
\\
&=& p\frac{ i (i-1)}{i+j-1} \E \bigl[X^{i+j} \bigr] + p (k-i) \E
\bigl[X^{i+j} \bigr]
\\
&=& p \biggl( \frac{i(i-1) + (k-i)(i+j-1)}{i+j -1} \biggr) \E \bigl[X^{i+j} \bigr]
\\
&=& p \biggl(k - \frac{ij}{i+j-1} \biggr) \E \bigl[X^{i+j} \bigr].
\end{eqnarray*}
\upqed\end{pf}

%re3.2 #&#
\begin{rmk}\label{mainrmk}
In Theorem~\ref{thm:matrix}, we only stated the positivity of the
family of quadratic forms $\mathcal{M}_k$. However, it is worth
mentioning that, thanks to the inequality~(\ref{Main-ineq}), each quadratic
form $\mathcal{M}_k$ dominates the nonnegative quadratic form
\[
P \longmapsto\E\bigl[ \bigl\{(\LL+ kp \Id)P(X) \bigr\}^2\bigr].
\]
\end{rmk}

%The great advantage of encoding the spectral assumption in term of
%positiveness of a quadratic form enables us to use matrix theory to
%provide non trivial facts about the moments of eigenfunction $X$.
%As an example we have the following corollary.

%co3.1 #&#
\begin{cor}\label{determinant}
For any eigenfunction $X$ of the generator $\LL$ with eigenvalue $-p$,
that is, $- \LL(X)=pX$:
\begin{longlist}[(ii)]
\item[(i)] All the eigenvalues of matrix $\mathbf{M}_k$ are nonnegative.
\item[(ii)] All the $l$th leading principal minor of the
matrix $\mathbf{M}_k$ are nonnegative for $l\le k$.
\end{longlist}
\end{cor}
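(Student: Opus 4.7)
The plan is to derive both statements as almost immediate consequences of Theorem \ref{thm:matrix}, which already supplies the essential fact that the bilinear form $\mathcal{M}_k$ is symmetric and non-negative, with matrix $p\mathbf{M}_k$ in the basis $\{1,T,\dots,T^k\}$. Since $X$ is an eigenfunction with eigenvalue $-p$ coming from assumption (b), we have $p \in \N$, and the only case in which $p=0$ is when $X$ is a constant, which can be excluded (or handled trivially since then all moment matrices reduce to rank-one non-negative matrices). So in what follows we assume $p \ge 1$.

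For part (i), I would argue as follows. Because $\mathcal{M}_k$ is a symmetric non-negative bilinear form on the finite-dimensional space $\R_k[T]$, its matrix representation in any basis is a symmetric positive semi-definite matrix. Theorem \ref{thm:matrix} identifies this representation in the canonical basis as $p\mathbf{M}_k$. Hence $p\mathbf{M}_k$ is PSD, and dividing by the positive scalar $p$, the matrix $\mathbf{M}_k$ is itself PSD. In particular, all its eigenvalues are non-negative.

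For part (ii), I would use the standard fact that every principal submatrix of a symmetric positive semi-definite matrix is itself positive semi-definite. Concretely, the $l$-th leading principal submatrix of $\mathbf{M}_k$ represents the restriction of $\mathcal{M}_k$ (divided by $p$) to the subspace $\R_{l-1}[T] \subset \R_k[T]$ spanned by $\{1,T,\dots,T^{l-1}\}$, which is obviously still a non-negative bilinear form. Hence this submatrix is PSD, and its determinant, being the product of its non-negative eigenvalues, is non-negative. This gives exactly the non-negativity of the $l$-th leading principal minor for each $l \le k$.

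There is essentially no obstacle: the corollary is a direct translation of the algebraic content of Theorem \ref{thm:matrix} (non-negativity of a symmetric bilinear form) into the language of matrix spectral data and principal minors, via standard linear algebra. The only minor care to take is to separate the factor $p>0$ so that the conclusions transfer cleanly from $p\mathbf{M}_k$ to $\mathbf{M}_k$, and to make explicit the interpretation of the $l$-th leading principal submatrix as the matrix of $\mathcal{M}_k$ restricted to $\R_{l-1}[T]$.
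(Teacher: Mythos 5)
Your proof is correct and follows the same route as the paper, which simply states that the corollary follows directly from Theorem \ref{thm:matrix} by standard linear algebra (positive semi-definiteness of $p\mathbf{M}_k$, hence of $\mathbf{M}_k$, hence non-negativity of eigenvalues and of all leading principal minors). You merely make explicit the standard facts that the paper leaves to the reference \cite{S}.
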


\begin{pf}
The proof follows directly from standard linear algebra (see, e.g.,
\cite{S}).
\end{pf}

The moments matrix $\mathbf{M}_k$ can help one to give nontrivial
moment inequalities, sometimes sharper than the existing estimates so
far, involving the moments of the eigenfunctions of a generator $\LL$.
Here is an application where we sharpen the standard fourth moment
inequality $\E[X^4]\geq3\E[X^2]^2$. We mention that the next theorem
unifies the two well-known criteria of convergence in law
(i.e., Gaussian and Gamma approximation) for a sequence of random
variables inside a fixed Wiener chaos; see \cite{n-p-05,n-pe-2}.

%th3.2 #&#
\begin{thm}\label{unify-criteria}
If $X$ is a nonzero eigenfunction of generator $\LL$, then
%
%e3.3 #&#
\begin{equation}
\label{4moment}
\frac{\E[X^4]}{3} - \E\bigl[X^2\bigr]^2
\ge\frac{\E[X^3]^2}{2 \E[X^2]}.
\end{equation}
Moreover, assume that $X_n \in\K(\LL+p\Id)$ for each $n \ge1$ and
%
%e3.4 #&#
\begin{equation}
\label{equalimit}
\frac{\E[X_n^4]}{3} - \E\bigl[X_n^2
\bigr]^2 -\frac{\E[X^3]^2}{2 \E[X^2]}\to0.
\end{equation}
Then all the adherence values in distribution of the sequence $\{X_n\}
_{n\ge1}$ is either a Gaussian or a scaling of a centered Gamma random
variable.
\end{thm}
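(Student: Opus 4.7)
The inequality (\ref{4moment}) should follow directly from the positive-semidefiniteness of the moment matrix $\mathbf{M}_2$ given by Corollary \ref{determinant}. Since $X$ is a non-zero eigenfunction with eigenvalue $-p\neq 0$, integration by parts gives $\E[X]=0$, so the matrix from Theorem \ref{thm:matrix} with $k=2$ reduces to
\[
\mathbf{M}_2 = \begin{pmatrix} 2 & 0 & 2\E[X^2] \\ 0 & \E[X^2] & \E[X^3] \\ 2\E[X^2] & \E[X^3] & \tfrac{2}{3}\E[X^4] \end{pmatrix}.
\]
A direct expansion yields $\det(\mathbf{M}_2)=4\E[X^2]\bigl(\tfrac{1}{3}\E[X^4]-\E[X^2]^2-\tfrac{\E[X^3]^2}{2\E[X^2]}\bigr)$, and the non-negativity of this determinant (Corollary \ref{determinant}(ii)) is exactly (\ref{4moment}).

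For the second assertion, I would fix an arbitrary subsequence along which $X_{n_k}\stackrel{\text{law}}{\to} X_\infty$. By Lemma \ref{Hypercontract}, all moments of $X_{n_k}$ are uniformly bounded and hence, after a further extraction, converge to those of $X_\infty$. The hypothesis (\ref{equalimit}), combined with the determinant identity above, forces $\det(\mathbf{M}_2^\infty)=0$, where $\mathbf{M}_2^\infty$ is the limit of the moment matrices. Excluding the trivial case $X_\infty\equiv 0$, set $a:=\E[X_\infty^2]>0$: inspecting the first two rows of the equation $\mathbf{M}_2^\infty(a_0,a_1,a_2)^T=0$ shows that any non-zero kernel vector must satisfy $a_2\neq 0$ (if $a_2=0$ then the first row forces $a_0=0$ and the second forces $a_1=0$ since $a>0$). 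After normalizing $a_2=1$, I obtain a polynomial $P(T)=a_0+a_1T+T^2$ with $\mathcal{M}_2(P,P)\to 0$ along the subsequence.

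By Remark \ref{mainrmk}, this in turn implies $(\LL+2p\,\Id)P(X_{n_k})\to 0$ in $L^2$. Using $\LL X_n=-pX_n$ together with the diffusion identity $\LL(X_n^2)=-2pX_n^2+2\Gamma[X_n]$, a direct computation yields
\[
(\LL+2p\,\Id)P(X_n)=2pa_0+pa_1X_n+2\Gamma[X_n],
\]
so that $\Gamma[X_{n_k}]\to \alpha X_\infty+\beta$ in $L^2$, with $\alpha=-pa_1/2$ and $\beta=-pa_0$. Since $\Gamma\geq 0$ almost surely and $X_\infty$ is non-degenerate, necessarily $\alpha\geq 0$.

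It then remains to split two regimes. If $\alpha=0$, then $\Gamma[X_{n_k}]\to\beta$ in $L^2$, so $\mathrm{Var}(\Gamma[X_{n_k}])\to 0$ and Theorem \ref{Gamma-bound-general} identifies $X_\infty$ as a centered Gaussian (the constant $\beta$ must equal $p\,\E[X_\infty^2]$ by the integration-by-parts identity $\E[\Gamma[X_{n_k}]]=p\,\E[X_{n_k}^2]$). If $\alpha>0$, rescaling $Y_{n_k}:=(p/\alpha)X_{n_k}$ gives $\mathrm{Var}(\Gamma[Y_{n_k}]-pY_{n_k})\to 0$, so Remark \ref{gamma-criterion} identifies the limit of $Y_{n_k}$ as a centered Gamma and hence $X_\infty$ as a scaling thereof. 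The only delicate point, to my mind, is the kernel analysis guaranteeing $a_2\neq 0$, which allows the $\Gamma[X_n]$ term to actually appear in the $L^2$-identity above; the rest is a plug-and-play of the previously established tools.
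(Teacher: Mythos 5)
Your proposal is correct and follows essentially the same route as the paper: positivity of $\det \mathbf{M}_2$ for the inequality \eqref{4moment}, then extraction of a degree-two polynomial $P$ annihilating the limiting quadratic form (you take a kernel vector of the limit matrix, where the paper uses the first row of the adjugate of $\mathbf{M}_2(X_n)$ --- the same vector up to normalization, with the same observation that its leading coefficient is nonzero when $X_\infty\neq 0$), followed by the dichotomy on the linear coefficient of $(\LL+2p\,\Id)P(X_n)=2\Gamma[X_n]+pa_1X_n+2pa_0$ and an appeal to Theorem \ref{Gamma-bound-general} and Remark \ref{gamma-criterion}. The only blemish is the side-claim that necessarily $\alpha\ge 0$: positivity of $\Gamma$ only gives $\alpha X_\infty+\beta\ge 0$ in the limit, which constrains the sign of $\alpha$ only when $X_\infty$ is unbounded on one side; this is harmless, however, since the rescaling $Y_{n}=(p/\alpha)X_{n}$ works for any $\alpha\neq 0$ and still yields a (possibly negative) scaling of a centered Gamma, exactly as in the paper's Case (2).
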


\begin{pf}
The moments matrix $\mathbf{M}_2$ associated to $X$ is given by
%
%e3.5 #&#
\begin{equation}
\mathbf{M}_2(X)=
\pmatrix{ 2 & 0 & 2 \E
\bigl[X^2\bigr] \vspace*{3pt}
\cr
0 & \E\bigl[X^2\bigr] &
\E\bigl[X^3\bigr]\vspace*{3pt}
\cr
2 \E\bigl[X^2\bigr]& \E
\bigl[X^3\bigr] & \ds\tfrac{2}{3} \E\bigl[X^4\bigr] }.
\end{equation}

Using Corollary~\ref{determinant}, we infer that
\[
%\label{determi}
\operatorname{det}(\mathbf{M}_2) = 4 \E\bigl[X^2
\bigr] \biggl\{ \frac{\E[X^4]}{3} - \E \bigl[X^2\bigr]^2
\biggr\} - 2 \E\bigl[X^3\bigr]^{2} \ge0,
\]
which immediately implies (\ref{4moment}).
Up to extracting a
subsequence, we may assume that $X_n \to X_\infty$ in distribution. We
further assume that $X_\infty\neq0$. Assumption (\ref{equalimit})
entails that
\[
\det\mathbf{M}_2(X_n)\to0.
\]
Let $V_n= (\frac{2}{3}\E[X_n^4] \E[X_n^2]-\E[X_n^3]^2,   2 \E
[X_n^2] \E[X_n^3],   -2\E[X_n^2]^2)$ be the first line of the
adjugate matrix of $\mathbf{M}_2(X_n)$. Since $X_n$ converges in
distribution, we have
$V_n \to V_\infty=(a,b,c)$. We set $P(X)=c X^2+bX+a$. As a result, we have:
%
%e3.6 #&#
\begin{equation}
\label{polynomialstuff}
\mathcal{M}_2(X_n) (P,P)\to0.
\end{equation}

Using Remark~\ref{mainrmk}, we see that
\[
\E \bigl[ \bigl\{(\LL+2p\Id)P(X_n) \bigr\}^2 \bigr]
\to0.
\]

Next,
\begin{eqnarray*}
(\LL+2p\Id)P(X_n)&=&c(\LL+2p\Id)X_n^2+b p
X_n+2 a p
\\
&=& 2 c \Gamma(X_n)+b p X_n+2 a p.
%&\to0.
\end{eqnarray*}
We notice that $c\neq0$ since $X_\infty\neq0$. Now two possible
cases can happen.
\begin{longlist}[\textit{Case} (2):]
\item[\textit{Case} (1):] If $\E[X_n^3]\to0$, then $b=0$. Hence, we have $\E
 [\Gamma(X_n)+\frac{a p}{c} ]^2\to0$ and therefore the
sequence $\{X_n\}_{n \ge1}$ converges toward a Gaussian random variable.
See Theorem~\ref{Gamma-bound-general}.
\item[\textit{Case} (2):]  If $\E[X_n^3] \not\to0$, then $b\neq0$. Hence, we
have $\E[\Gamma(X_n)+\frac{b p}{2c} X_n+\frac{a p}{c}]^2\to0$. We set
$X_n=\lambda Y_n$ and we may choose $\lambda$ in such way that
\[
\operatorname{Var} \bigl(\Gamma(Y_n)-p Y_n \bigr)\to0.
\]
This enables us to use the content of the Remark~\ref{gamma-criterion}
and assert that $Y_n+\E[Y_n^2]$ converges in distribution toward a gamma
random variable. Hence, $X_n$ converges in distribution toward a
scaling of a centered gamma law.\quad\qed
\end{longlist}
\noqed\end{pf}

It is clear that Theorem~\ref{unify-criteria} also gives back the
fourth moment theorem in (a)--(b)--(c) structures from the fact that a
random variable $G$ satisfying $\E[G^2]=1$, $\E[G^3]=0$ and $\E[G^4]=3$
cannot have a gamma distribution.

The following proposition states a nontrivial inequality between the
second, fourth and sixth moments of eigenfunctions of $\LL$.

%pr3.1 #&#
\begin{prop}\label{6moment-prop}
If $X$ is an eigenfunction of $\LL$, then
%
%e3.7 #&#
\begin{equation}
\label{6moment1}
\E\bigl[X^4\bigr]^2\leq\tfrac{3}{5}
\E\bigl[X^6\bigr] \E\bigl[X^2\bigr].
\end{equation}
\end{prop}

%re3.3 #&#
\begin{rmk}
Notice that this inequality is an equality when the distribution of $X$
is\vspace*{-1pt} Gaussian.
\end{rmk}

\begin{pf*}{Proof of Proposition~\ref{6moment-prop}}
The moments matrix $\mathbf{M}_3$ associated to $X$ has the form\vspace*{-3pt}
%
%e3.8 #&#
\begin{equation}
\mathbf{M}_3=
\pmatrix{ 3 & \star& 3 \E
\bigl[X^2\bigr] & \star\vspace*{1.5pt}
\cr
\star& 2 \E
\bigl[X^2\bigr] & \star& 2 \E\bigl[X^4\bigr]\vspace*{1.5pt}
\cr
3 \E\bigl[X^2\bigr] &\star &\ds \tfrac{5}{3} \E
\bigl[X^4\bigr] & \star\vspace*{1.5pt}
\cr
\star& 2 \E
\bigl[X^4\bigr] & \star& \ds\tfrac{6}{5} \E\bigl[X^6
\bigr] }.
\end{equation}
Since this matrix is positive, we have in particular
\[
\left\llvert %
\matrix{ 2\E\bigl[X^2\bigr] & 2\E
\bigl[X^4\bigr]
\vspace*{1.5pt}\cr
2\E\bigl[X^4\bigr] & \ds\tfrac{6}{5} \E\bigl[X^6
\bigr] }
\right\rrvert \geq0,
\]
which gives the claimed inequality.\vspace*{-1pt}
\end{pf*}

Using Proposition~\ref{6moment-prop}, we can already prove the
following sixth moment theorem, that is, Theorem~\ref{CLTFMquibourre}
in the case $k=3$. Note that we will get back this result when we will
prove our main result (Section~\ref{sectioevnemoment}).\vspace*{-1pt}

%co3.2 #&#
\begin{cor}\label{6moment}
A sequence $\{X_n\}_{n \ge1}$ such that $X_n \in\K(\LL+ p \Id)$ for
each $n \ge1$, converges in distribution toward the standard Gaussian
law if and only if $\E[X_n^2]\to1$ and $ \E[X_n^6]\to15$.
\end{cor}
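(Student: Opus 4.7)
The plan is to use Proposition 3.1 to squeeze the fourth moment and then invoke the fourth moment theorem (Theorem 2.6).

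For the easy direction, assume $X_n \to \mathcal{N}(0,1)$ in law. Since $X_n$ lives in the fixed eigenspace $\K(\LL+p\Id)$, Lemma 2.1 gives $\sup_n \E[|X_n|^r] < \infty$ for every $r \geq 1$, so convergence in distribution upgrades to convergence of all moments; in particular $\E[X_n^2] \to 1$ and $\E[X_n^6] \to 15$.

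For the nontrivial direction, suppose $\E[X_n^2]\to 1$ and $\E[X_n^6]\to 15$. By Proposition 3.1 applied to each $X_n$,
\begin{equation*}
\E[X_n^4]^2 \leq \frac{3}{5}\,\E[X_n^6]\,\E[X_n^2] \longrightarrow \frac{3}{5}\cdot 15 \cdot 1 = 9,
\end{equation*}
so $\limsup_n \E[X_n^4] \leq 3$. For the matching lower bound, I would extract it from the positivity of $\mathbf{M}_2$ in Theorem 3.1. Since all eigenvalues of $\mathbf{M}_2$ are non-negative, the matrix is positive semi-definite, and hence every principal minor is non-negative. In particular, the $2\times 2$ principal minor obtained from rows and columns $\{0,2\}$ of $\mathbf{M}_2$ gives
\begin{equation*}
\det\begin{pmatrix} 2 & 2\E[X_n^2] \\ 2\E[X_n^2] & \tfrac{2}{3}\E[X_n^4]\end{pmatrix} = \frac{4}{3}\E[X_n^4] - 4\,\E[X_n^2]^2 \geq 0,
\end{equation*}
i.e.\ $\E[X_n^4] \geq 3\E[X_n^2]^2 \to 3$, so $\liminf_n \E[X_n^4] \geq 3$. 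Together with the upper bound this yields $\E[X_n^4] \to 3$. Then Theorem 2.6 (the fourth moment theorem in (a)-(b)-(c) structures) immediately gives $X_n \stackrel{\text{law}}{\to}\mathcal{N}(0,1)$.

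The only potentially delicate point is the passage from the algebraic inequality in Proposition 3.1 to the asymptotic one; this is routine provided the second moments stay bounded, which is granted by the assumption $\E[X_n^2]\to 1$. Everything else is just chaining the already-available ingredients (Proposition 3.1, positivity of $\mathbf{M}_2$, Theorem 2.6, Lemma 2.1), so there is no real obstacle — the content of the corollary is essentially the observation that the sixth moment controls the fourth moment from above, while the $\mathbf{M}_2$-matrix controls it from below.
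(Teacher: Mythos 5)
Your proof is correct and follows essentially the same route as the paper's: both sandwich $\E[X_n^4]$ between the upper bound coming from Proposition \ref{6moment-prop} (via $\E[X_n^4]^2\le\frac{3}{5}\E[X_n^6]\E[X_n^2]\to 9$) and the standard lower bound $\E[X_n^4]\ge 3\E[X_n^2]^2$, then invoke the fourth moment theorem. The only cosmetic difference is that you rederive the lower bound from a principal minor of $\mathbf{M}_2$, whereas the paper simply quotes the standard inequality; this is a valid and self-contained variant within the same framework.
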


\begin{pf}
By Proposition  (\ref{6moment-prop}), for $X \in\K ( \LL+ p \Id
 )$, we have
\[
\E\bigl[X^6\bigr]\geq\frac{5}{3}\frac{\E[X^4]^2}{\E[X^2]}\geq
\frac{5}{3}\frac
{(3\E[X^2]^2)^2}{\E[X^2]}=15\E\bigl[X^2
\bigr]^3.
\]
Therefore, for the sequence $\{X_n\}_{n\geq1}$ in $\K ( \LL+ p
\Id )$, if $\E[X_n^2]\to1$ and $\E[X_n^6]\to15$, then from the
previous chain of inequalities, we deduce that $\E[X_n^4]\to3$.
Hence, the sequence $\{X_n\}_{n \ge1}$ converges in distribution toward
$\mathcal{N}(0,1)$ according to Theorem~\ref{Steindirichletfinal}.
\end{pf}

%s4 #&#
\section{New central limit theorems}\label{Central-limit-main}

In this section, we will establish our main criteria for central
convergence. In a first subsection, we will first focus on the main
theorem of the paper, the so-called \textit{even moment criterion}. In a
second subsection, we will give additional criteria of central
convergence. As before, we work under assumptions (a)--(b)--(c) stated in\vspace*{-2pt}
Section~\ref{setup}.
% and the Theorem~\ref{CLTFM-ultimate} will follow by choosing $\LL$
%the Ornstein-Uhlenbeck generator.~\\

%s4.1 #&#
\subsection{The even moment criterion}\label{sectioevnemoment}

We state below our main result. Note that Theorem~\ref{CLTFMquibourre}
is a particular case of Theorem~\ref{2-2k-moment}, by simply choosing
$\LL$ to be the Ornstein--Uhlenbeck generator.

%th4.1 #&#
\begin{thm}\label{2-2k-moment}
Let $\LL$ be a Markov operator satisfying \textup{(a)}--\textup{(b)}--\textup{(c)}, $p\ge1$ be an
eigenvalue of $-\LL$, and $\{X_n\}_{n\geq1}$ a sequence\vspace*{1pt} of elements in
$\K ( \LL+ p \Id )$ for all $n \ge1$, such that
$\lim_{n \to\infty}\E [X_n^2 ]=1$. Then, for any integer
$k\ge2$, as $n \to\infty$, we have
%
%e4.1 #&#
\begin{equation}
\label{2-2k-criterion} X_n \stackrel{\mathit{law}} {\rightarrow}
\mathcal{N}(0,1)\quad \mbox{if and only if} \quad\E\bigl[X_n^{2k}
\bigr] \to\E\bigl[N^{2k}\bigr]=(2k-1)!!.
\end{equation}
\end{thm}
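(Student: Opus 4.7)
The forward implication is immediate: Lemma 2.1 ensures that the sequence $\{X_n\}$ is bounded in every $L^r$, so convergence in distribution to $N$ upgrades via uniform integrability to $\E[X_n^{2k}] \to (2k-1)!!$.

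For the converse, the plan is to deduce the conclusion from a quantitative estimate of the form
\[
\mathrm{Var}(\Gamma[X]) \leq C_k'\left(\frac{\E[X^{2k}]}{(2k-1)!!} - 1\right)
\]
valid for every $X \in \K(\LL+p\Id)$ with $\E[X^2]=1$, at which point Theorem 2.1 delivers $d_{\mathrm{TV}}(X_n, \mathcal{N}(0,1))\to 0$. The cornerstone of the argument is the identity obtained by combining the diffusion property (2.1) with the Hermite ODE $H_k''(x) - xH_k'(x) + kH_k(x) = 0$, equivalently $H_k''(x) = k(k-1)H_{k-2}(x)$:
\[
(\LL + kp\Id)\,H_k(X) = k(k-1)\,H_{k-2}(X)\,\bigl(\Gamma[X] - p\bigr).
\]
Using the spectral estimate $\E\bigl[\{(\LL + kp\Id)Q(X)\}^2\bigr] \leq kp\,\mathcal{M}_k(Q,Q)$ extracted from the proof of Theorem 3.1, this immediately yields
\[
k(k-1)^2\,\E\bigl[H_{k-2}(X)^2(\Gamma[X]-p)^2\bigr] \leq p\,\mathcal{M}_k(H_k,H_k).
\]

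The right-hand side $\mathcal{M}_k(H_k,H_k)$ is, thanks to the parity of $H_k$, an explicit linear combination of the even moments $\E[X^{2j}]$ for $j\leq k$, which vanishes identically at the Gaussian moments. To convert the display above into the desired variance bound one must: (i) control $\mathcal{M}_k(H_k,H_k)$ by $\E[X^{2k}]/(2k-1)!! - 1$ alone, using the iterated sub-minor inequalities of $\mathbf{M}_j$ for $j\leq k$ (in the spirit of Corollary 3.3) to tame the intermediate moments; and (ii) remove the $H_{k-2}(X)^2$ weight on the left-hand side.

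The main obstacle lies in (ii): a naive splitting around the $k-2$ real zeros of $H_{k-2}$ loses too much. The form of $C_k = 4/\sqrt{2k(k-1)\int_0^1 ((1+t^2)/2)^{k-2}\,dt}$ in Theorem 1.4 strongly suggests proceeding by a Mehler-type averaging argument: after tensorizing $(E,\mu,\LL)$ with an independent Gaussian component (a construction provided in Section 2.2), one applies the spectral estimate to the one-parameter family $tX + \sqrt{1-t^2}\,Z$, $t\in[0,1]$, and integrates in $t$ against an appropriate weight. The integration replaces the pointwise weight $H_{k-2}(X)^2$ by an explicit polynomial expression with strictly positive expectation, whose $t$-integral produces precisely the factor $\int_0^1 ((1+t^2)/2)^{k-2}\,dt$, thereby closing the chain of inequalities and yielding the CLT.
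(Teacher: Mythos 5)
Your treatment of the easy direction and your identification of the two key ingredients --- the identity $(\LL + kp\Id)H_k(X) = k(k-1)H_{k-2}(X)(\Gamma[X]-p)$ and the spectral domination $\E\bigl[\{(\LL+kp\Id)P(X)\}^2\bigr]\le kp\,\mathcal{M}_k(P,P)$ of Remark \ref{mainrmk} --- coincide with the paper's starting point. But the converse is not proved: both steps (i) and (ii) of your plan are left as sketches, and the resolution you propose for (ii) does not work as stated. The interpolation $tX+\sqrt{1-t^2}\,Z$ is not an eigenfunction of the tensorized generator unless $p=1$ (it is a sum of eigenfunctions with eigenvalues $p$ and $1$), so neither $\mathcal{M}_k$ nor the estimate of Remark \ref{mainrmk} applies to it; the factor $\int_0^1(\frac{1+t^2}{2})^{k-2}dt$ in $C_k$ does not come from a Mehler-type $t$-integration but from the change of variables $u=1-t^2$ in an explicit Beta-type formula for a coefficient (see below). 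Step (i) is also genuinely problematic: $\mathcal{M}_k(H_k,H_k)$ is proportional to $\E[W_k(X)]$, which involves all intermediate moments $\E[X^{2j}]$, $j\le k$, while the hypothesis controls only $\E[X^2]$ and $\E[X^{2k}]$, and the minor inequalities of Corollary \ref{determinant} bound these quantities from below, not from above. Most seriously, even granting $\E[H_{k-2}(X_n)^2(\Gamma[X_n]-p)^2]\to 0$, the weight $H_{k-2}(X_n)^2$ cannot be removed in a general (a)-(b)-(c) structure: the example following Theorem \ref{thm:main1} exhibits eigenfunctions with $\E[W_3(X_n)]=\E[W_5(X_n)]=0$ whose law is $\frac12\mathcal{N}(0,1)+\frac12\delta_0$, so no argument relying on the single quantity $\E[W_k(X_n)]\to 0$ for one $k\ge 3$ can succeed.

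The idea you are missing is the reduction to the fourth moment theorem through a positive expansion. The paper sets $W_i(x)=(2i-1)\bigl(x\int_0^x H_iH_{i-2}\,dt-H_i(x)H_{i-2}(x)\bigr)$, shows via your spectral inequality and an integration by parts that $\E[W_i(X)]\ge 0$ for every eigenfunction, with $\E[W_2(X)]=\E[X^4]-6\E[X^2]+3$, and then proves (Proposition \ref{tenduduslip}, the technical heart of the argument, via Hermite expansions and a generating-function computation in the Appendix) that
\[
T_k(x)=x^{2k}-k\,(2k-1)!!\,x^2+(k-1)\,(2k-1)!!=\sum_{i=2}^k\alpha_{i,k}W_i(x)\quad\text{with all }\alpha_{i,k}>0.
\]
Under the hypotheses $\E[T_k(X_n)]\to 0$, and since every summand has nonnegative expectation, $\E[W_2(X_n)]\to 0$, whence $\E[X_n^4]\to 3$ and Theorem \ref{Steindirichletfinal} concludes. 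Discarding the terms $i\ge 3$ is precisely what disposes of the $H_{i-2}$ weights (for $i=2$ the weight is $H_0\equiv 1$), so the obstacle you flag in (ii) never has to be confronted; and the constant $C_k$ is obtained by retaining only the $i=2$ term with its explicit coefficient $\alpha_{2,k}$.
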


The proof of Theorem~\ref{2-2k-moment} is rather lengthy; it is thus
divided in three steps which are detailed below.
\begin{pf*}{Sketch of the proof}
\textit{Step} (1):  We find a family $\PP=\{ W_k \vert  k \ge2\}$ of
real polynomials which satisfies the two following properties:
\begin{longlist}[(ii)]
\item[(i)]  $\E \bigl[W_k(X_n) \bigr]\ge0, \forall k
\ge2, \forall n \ge1$,
\item[(ii)]   $X_n \stackrel{\mathrm{law}} {\rightarrow}
\mathcal{N}(0,1) \mbox{ if and only if } \E \bigl[W_2(X_n)
\bigr]\to0$, as $n \to\infty$.
\end{longlist}

\textit{Step} (2):  In the second step, we construct a polynomial $T_k$
such that, under the assumptions of Theorem~\ref{2-2k-moment}, we have
\[
T_k  = \sum_{i=2}^k
\alpha_{i,k} W_i \qquad \mbox{such that for all }i,\alpha_{i,k}>0,
\]
and
\[
\E \bigl[T_k(X_n) \bigr] \to 0 \qquad  \mbox{as } n \to\infty.
\]

\textit{Step} (3):  In the last step, using the fact that $\alpha
_{i,k}>0$ and property (i) of step~(1), we obtain that $\E
[W_2(X_n) ]\to0$. Finally, using property (ii) of step (1), we
complete the proof.
\end{pf*}

\begin{pf*}{Proof of Theorem~\protect\ref{2-2k-moment}}
The ``if'' part is a
simple consequence of  Lemma~\ref{Hypercontract}. For the ``only if''
part, we go into the details of the three aforementioned steps.
\begin{longlist}[\textit{Step} (1):]
\item[\textit{Step} (1):]  First, we introduce the suitable family $\PP$ of
polynomials. To this end, we denote by $\{H_k\}_{k\ge0}$ the family of
Hermite polynomials defined by the recursive relation
%
%e4.2 #&#
\begin{equation}
H_{0}(x) =1,\qquad  H_1(x)=x,\qquad  H_{k+1}(x) = x
H_k (x)-k H_{k-1}(x).
\end{equation}
For any $k \ge2$, we define the polynomial $W_k$ as
%
%e4.3 #&#
\begin{equation}
\label{Wk}
W_k(x) = (2k-1) \biggl(  x \int_{0}^{x}
H_{k}(t)H_{k-2}(t)\, \ud t - H_{k}(x)H_{k-2}(x)
\biggr),
\end{equation}
and the family $\PP$ as
%
%e4.4 #&#
\begin{equation}
\label{eq:polynomial-family}
\PP= \Biggl\{ P  \Big\vert P(x)= \sum_{k=2}^{m}
\alpha_k W_k(x);  m \ge2,  \alpha_k \ge0, 2\le k \le m \Biggr\}.
\end{equation}

The family $\PP$ encodes interesting properties of central convergence
which are the content of the two next lemmas. Below, Lemma~\ref{main1}
will provide the answer to property (i) of step (1).
\end{longlist}
%\noqed\end{pf*}

%le4.1 #&#
\begin{lma}\label{main1}
Let $\LL$ be a general Markov generator satisfying assumptions
\textup{(a)}--\textup{(b)}--\textup{(c)} in Section~\ref{setup}, and let $P$ be a polynomial
belonging to $\PP$. Then:
\begin{longlist}[(2)]
\item[(1)] If $N\sim\mathcal{N}(0,1)$, $\E[P(N)]=0$.
\item[(2)] If $X$ is an eigenvalue of $\LL$, $\E[P(X)]\geq0$.
\end{longlist}
\end{lma}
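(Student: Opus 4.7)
My plan is to reduce both parts to a statement about a single polynomial $W_k$, since $\PP$ is the cone generated by the $W_k$'s and expectation is linear. Setting $F_k(x) = \int_{0}^{x} H_k(t) H_{k-2}(t)\,\ud t$, the definition of $W_k$ becomes
$$W_k(x) = (2k-1)\bigl(xF_k(x) - F_k'(x)\bigr),$$
which puts the Stein operator $f \mapsto xf - f'$ in evidence.

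For Part (1), I would apply the classical Gaussian integration by parts $\E[N g(N)] = \E[g'(N)]$ with $g = F_k$, which immediately yields $\E[W_k(N)] = 0$. For Part (2), the first step is to mimic this identity in the $\LL$-setting: using \eqref{by-parts} with $Y = X$ and $-\LL X = pX$,
$$p\,\E[X F_k(X)] = \E[F_k'(X)\,\Gamma[X]] = \E[H_k(X) H_{k-2}(X)\,\Gamma[X]],$$
so that
$$\E[W_k(X)] = \frac{2k-1}{p}\,\E\!\left[H_k(X) H_{k-2}(X)\bigl(\Gamma[X] - p\bigr)\right].$$
The crucial second step is to recognise this quantity as a value of the positive quadratic form $\mathcal{M}_k$ of Section~\ref{algebraic}. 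Applying the diffusion property (a), together with $H_k' = k H_{k-1}$, $H_k'' = k(k-1) H_{k-2}$, and the three-term recurrence from the paper, I would check directly that
$$(\LL + kp\,\Id)[H_k(X)] = k(k-1)\,H_{k-2}(X)\bigl(\Gamma[X] - p\bigr).$$
Multiplying by $H_k(X)$ and integrating gives $\mathcal{M}_k(H_k, H_k) = k(k-1)\,\E[H_k(X) H_{k-2}(X)(\Gamma[X] - p)]$, and combining the two displays yields the key identity
$$\E[W_k(X)] = \frac{2k-1}{p\,k(k-1)}\,\mathcal{M}_k(H_k, H_k).$$
Since $\deg H_k = k$, the non-negativity of $\mathcal{M}_k$ on $\R_k[T]$ supplied by Theorem~\ref{thm:matrix} concludes the argument.

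The main obstacle here is conceptual rather than technical: one must \emph{guess} that the somewhat opaque polynomial $W_k$ has been engineered so that its expectation is proportional (with a positive constant) to $\mathcal{M}_k(H_k, H_k)$. A more naive approach, based on the Hermite expansion $W_k = \sum_{i=2}^{k} c_{k,i} H_{2i}$ with $c_{k,i} > 0$ and the hope that each $\E[H_{2i}(X)]$ be non-negative, is bound to fail since these individual expectations need not be non-negative. Only the specific weighting dictated by the Hermite three-term recurrence, matching the action of $\LL + kp\,\Id$ on $H_k(X)$, makes the final positivity work.
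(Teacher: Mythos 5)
Your argument is correct and coincides with the paper's own proof: both rest on the identity $(\LL+kp\,\Id)H_k(X)=k(k-1)H_{k-2}(X)(\Gamma[X]-p)$, the integration by parts \eqref{by-parts} relating $\E[H_k(X)H_{k-2}(X)(\Gamma[X]-p)]$ to $\frac{p}{2k-1}\E[W_k(X)]$, and the non-negativity of $\mathcal{M}_k$ applied to $H_k$. The only cosmetic difference is in Part (1), where you invoke the classical Stein identity $\E[Ng(N)]=\E[g'(N)]$ directly, while the paper realizes $N$ as a first-chaos eigenfunction with $\Gamma[N]=p=1$ and reuses the same computation; these are equivalent.
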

\begin{pf}
It is enough to prove that $E[W_k(X)]\geq0$ and $E[W_k(N)]=0$. Using
the diffusive property (\ref{diff}), the fact that $-\LL X = p X$ and
the recursive property of Hermite polynomials, we obtain that
%
%e4.5 #&#
\begin{eqnarray}
 (\LL+ k p  \Id) H_{k}(X)& =&  H_{k}^{\prime\prime}(X)
\Gamma(X) + H_{k}^{\prime}(X) \LL(X) + k p H_{k}(X)
\nonumber\\
& = & H_{k}^{\prime\prime}(X) \Gamma(X) - p X H_{k}^{\prime}(X)
+k p H_{k}(X)
\nonumber
\\[-8pt]
\label{computation1}
\\[-8pt]
\nonumber
&= & H_{k}^{\prime\prime}(X) \bigl( \Gamma(X) - p \bigr)
\\
\nonumber
&=&  k(k-1) H_{k-2}(X) \bigl( \Gamma(X) - p \bigr).
\end{eqnarray}
Therefore,
%
%e4.6 #&#
\begin{eqnarray}
\mathcal{M}_k(H_k)&=& \E \bigl[
H_{k}(X) ( \LL+ k p  \Id ) H_{k}(X) \bigr]
\nonumber
\\[-8pt]
\label{combination22}
\\[-8pt]
\nonumber
&= & k (k-1) \E \bigl[ H_{k}(X) H_{k-2}(X) \bigl( \Gamma(X) -
p \bigr) \bigr].
\end{eqnarray}
Next, by the integration by parts formula (\ref{by-parts}), we have
%
%e4.7 #&#
\begin{eqnarray}
&& \E \bigl[ H_{k}(X) H_{k-2}(X)
\bigl( \Gamma(X) - p \bigr) \bigr]
\nonumber\\
&&\qquad = \E \biggl[ \Gamma \biggl(\int
_{0}^{X} H_{k}(t) H_{k-2}(t)
\,\ud t , X \biggr) \biggr]
- p \E \bigl[ H_{k}(X) H_{k-2}(X) \bigr]
\nonumber
\\[-8pt]
\label{combination2}
\\[-8pt]
\nonumber
&&\qquad=p \E \biggl[ X \int_{0}^{X}
H_{k}(t) H_{k-2}(t)\, \ud t - H_{k}(X)
H_{k-2}(X) \biggr]
\\
\nonumber
&&\qquad= \frac{p}{2k-1}  \E \bigl[ W_k(X) \bigr].
\end{eqnarray}
Hence,
\[
\mathcal{M}_k(H_k)=\frac{pk(k-1)}{2k-1}  \E \bigl[
W_k(X) \bigr],
\]
and the inequality $\E[W_k(X)]\geq0$ follows from the positivity of
the bilinear form $\mathcal{M}_k$. Finally, choosing $\LL$ to be the
Ornstein--Uhlenbeck generator and $X=N$ a standard Gaussian random
variable living in the first Wiener chaos (i.e., $p=1$) with variance
$1$, then $\Gamma(N)=p=1$ and computation (\ref{combination2}) shows
that $\E[W_k(N)]=0$ for every $k\geq2$. Hence,
$\E[P(N)]=0$ for every $P \in\PP$.
\end{pf}

The next lemma is central in the proof of the even moment Theorem~\ref
{2-2k-moment}. In fact, the next lemma will provide answer to property
(ii) of step (1).

%le4.2 #&#
\begin{lma}\label{carac}
Assume that $\LL$ be a general Markov generator satisfying assumptions
\textup{(a)}--\textup{(b)}--\textup{(c)} of Section~\ref{setup}. Let $p \ge1$ and $\{X_n\}_{n\geq
1}$ a sequence\vspace*{1pt} of elements in $\K ( \LL+ p \Id )$
for all $n \ge1$. Let $P=\sum_{k=2}^m \alpha_k W_k \in\PP$ such that
$\alpha_2\neq0$. Then, as $n \to\infty$, we have
\[
X_n \stackrel{\mathit{law}} {\rightarrow} \mathcal{N}(0,1)\quad \mbox{if
and only if}\quad \E\bigl[P(X_n)\bigr] \to\E\bigl[P(N)\bigr]=0.
\]
\end{lma}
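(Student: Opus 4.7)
The plan is to separate the two directions, noting that the `only if' is a quick consequence of hypercontractivity while the `if' reduces to the fourth moment theorem already established in Theorem~\ref{Steindirichletfinal}.

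For the `only if' direction, I would argue that since $X_n$ lies in the single eigenspace $\K(\LL + p\Id)$, Lemma~\ref{Hypercontract} yields uniform $L^r$-boundedness for every $r \ge 1$. Coupled with the assumed convergence in distribution $X_n \to N$, this promotes itself to convergence of all moments, so $\E[P(X_n)] \to \E[P(N)]$, which equals $0$ by Lemma~\ref{main1}(1).

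For the `if' direction, the main input is the non-negativity structure guaranteed by Lemma~\ref{main1}(2): since each $\alpha_k \ge 0$ (from $P \in \PP$) and $\E[W_k(X_n)] \ge 0$, the expectation $\E[P(X_n)] = \sum_{k=2}^m \alpha_k \E[W_k(X_n)]$ is a sum of non-negative terms. Consequently, $0 \le \alpha_2 \E[W_2(X_n)] \le \E[P(X_n)] \to 0$, and since $\alpha_2 > 0$ by hypothesis, this forces $\E[W_2(X_n)] \to 0$. A direct computation from (\ref{Wk}), using $H_0 \equiv 1$ and $H_2(x) = x^2 - 1$, gives $W_2(x) = x^4 - 6x^2 + 3$. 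Combining this with the standard fourth moment inequality $\E[X_n^4] \ge 3\E[X_n^2]^2$---a direct consequence of the positivity of the matrix $\mathbf{M}_2$ from Theorem~\ref{thm:matrix}---one obtains the chain
\[
0 \le 3\bigl(\E[X_n^2] - 1\bigr)^2 \le \E[X_n^4] - 6\E[X_n^2] + 3 = \E[W_2(X_n)] \to 0.
\]
Hence $\E[X_n^2] \to 1$, and then automatically $\E[X_n^4] \to 3$. An application of the fourth moment theorem (Theorem~\ref{Steindirichletfinal}) concludes that $X_n \to \mathcal{N}(0,1)$ in law.

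No serious technical obstacle is anticipated: the whole argument is a short chain of reductions, turning on the fact that the polynomial $W_2$ is---essentially by design---the defect polynomial $x^4 - 6x^2 + 3$ whose vanishing expectation drives the classical fourth moment criterion. The hypothesis $\alpha_2 \neq 0$ is precisely what ensures that the $k=2$ component can be isolated from the non-negative sum defining $\E[P(X_n)]$.
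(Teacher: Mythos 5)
Your proof is correct and follows essentially the same route as the paper's: isolate $\alpha_2\E[W_2(X_n)]$ from the non-negative sum via Lemma~\ref{main1}, identify $W_2(x)=x^4-6x^2+3$, use the decomposition $\E[X_n^4]-6\E[X_n^2]+3=\bigl(\E[X_n^4]-3\E[X_n^2]^2\bigr)+3\bigl(\E[X_n^2]-1\bigr)^2$ to extract $\E[X_n^2]\to1$ and $\E[X_n^4]\to3$, and conclude with Theorem~\ref{Steindirichletfinal}. The converse direction via Lemma~\ref{Hypercontract} and moment convergence is also exactly what the paper intends (it states this only once, for the main theorem, and leaves it implicit here).
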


\begin{pf}
In virtue of Lemma~\ref{main1},
\begin{eqnarray*}
\E\bigl[P(X_n)\bigr]&=&\sum_{k=2}^m
\alpha_k \E\bigl[W_k(X_n)\bigr]
\\
&\ge& \alpha_2 \E\bigl[W_2(X_n)\bigr]
\\
&=&\alpha_2 \bigl(\E\bigl[X_n^4\bigr]-6\E
\bigl[X_n^2\bigr]+3 \bigr).
\end{eqnarray*}
This leads to
\[
0\leq\E\bigl[X_n^4\bigr]-6\E\bigl[X_n^2
\bigr]+3 \leq\frac{1}{\alpha_2} \E\bigl[P(X_n)\bigr].
\]
By assumption, $\E[P(X_n)]\to0$, so $\E[X_n^4]-6\E[X_n^2]+3 \to0$. On
the other hand,
\[
\E\bigl[X_n^4\bigr]-6\E\bigl[X_n^2
\bigr]+3=\E\bigl[X_n^4\bigr]-3 \E\bigl[X_n^2
\bigr]^2+3\bigl(\E\bigl[X_n^2\bigr]-1
\bigr)^2.
\]
Thus, we obtain that $\E[X_n^2] \to1$ and $\E[X_n^4]\to3$, and we can
use Theorem~\ref{Steindirichletfinal} to conclude.
\end{pf}
\begin{longlist}[\textit{Step} (2):]
\item[\textit{Step} (2):] This step consists in finding a suitable polynomial
$T_k \in\PP$ of the form
%
%e4.8 #&#
\begin{equation}
\label{Tk}
T_k (x) = x^{2k} - \alpha_k
x^2 + \beta_k,\qquad  \alpha_k, \beta_k
\in \R.
\end{equation}
\end{longlist}

To find such a polynomial, notice that according to step (1), the
function $\phi_k\dvtx x\mapsto\E[T_k(xN)]$ must be positive and vanish at
$x=1$. Hence, we must have
$\phi_k(1)=\phi_k '(1)=0$. This leads us to the following system of equations:
\[
\cases{
(2k-1)!!-\alpha_k+
\beta_k=0,
\vspace*{2pt}\cr
2k (2k-1)!! -2\alpha_k=0.}
\]
Therefore, the coefficients $\alpha_k$ and $\beta_k$ are necessarily
given by
\[
\alpha_k = k (2k-1)!! \quad\mbox{and}\quad \beta_k= (k-1)
(2k-1)!!.
\]
It remains to check that the corresponding polynomial $T_k(x)=x^{2k} -
k  (2k-1)!!  x^2 + (k-1) (2k-1)!! \in\PP$. To this end, one needs
to show that $T_k$ can be expanded over the basis
$\{W_k\}_{k \geq2}$ with positive coefficients. We answer to this by
the affirmative with the next proposition, which also provides an
explicit formula for the coefficients.

%pr4.1 #&#
\begin{prop}\label{tenduduslip}
Let $k \geq2$, and $T_k(x)=x^{2k} - k  (2k-1)!!  x^2+ (k-1)
(2k-1)!!$. Then
%
%e4.9 #&#
\begin{equation}
\label{eq:expx2kk}
T_k(x)=\sum_{i=2}^k
\alpha_{i,k} W_i(x),
\end{equation}
where
\[
\alpha_{i,k} = \frac{(2k-1)!!}{2^{i-1}(2i-1)(i-2)!}
\pmatrix{k
\cr
i}
\int_0^1(1-u)^{-1/2}
u^{i-2} \biggl(1-\frac{u}{2} \biggr)^{k-i} \,du.
\]
In particular, $T_k\in\PP$ and $\alpha_{2,k}>0$ for all $k\ge1$.
\end{prop}

The proof of this proposition is rather involved and can be found in
the \hyperref[app]{Appendix}.
\begin{longlist}[\textit{Step} (3):]
\item[\textit{Step} (3):]  Let $p \ge1$. Assume that $\{X_n\}_{n\geq1}$ is a
sequence\vspace*{1pt} of elements of $\K(\LL+p\Id)$ for all $n \geq1$ such that
$\lim_{n\to\infty} \E [X_n^2 ]=1$. We further assume that
$\E [X_n^{2k} ]\to(2k-1)!!$. Using step (2), we have
\begin{eqnarray*}
\E \bigl[T_k(X_n) \bigr]&=&\E \bigl[X_n^{2k}
\bigr]-k (2k-1)!! \E \bigl[X_n^2\bigr]+(k-1) (2k-1)!!
\\
%&=&\E\left[X_n^{2k}\right]-(2k-1)!!\\
&\to& 0.
\end{eqnarray*}
To finish the proof, by step (2), we know that $T_k\in\PP$ and
$c_{2,k}>0$. Thus, Lemma~\ref{carac} applies and one gets the desired
conclusion.\quad\qed
\end{longlist}
\noqed\end{pf*}

We end this section with the following result containing a quantitative
version of the Theorem~\ref{2-2k-moment}. We remark that item (1) of
Theorem~\ref{last-thm} contains Theorem~\ref{superquantification} in
the \hyperref[sec1]{Introduction} by assuming $\LL$ to be Ornstein--Uhlenbeck operator.
%
%th4.2 #&#
\begin{thm}\label{last-thm}
Let $\LL$ be a Markov operator satisfying assumptions \textup{(a)}--\textup{(b)}--\textup{(c)} of
Section~\ref{setup}. Let $p\ge1$ and $X$ be an eigenfunction of $\LL$
with eigenvalue $p$ such that $\E[X^2]=1$. Assume that $k \ge2$. Then
\begin{longlist}[(2)]
\item[(1)] We have the following general quantitative bound:
%
%e4.10 #&#
\begin{equation}
\label{boundclt}
d_{\mathrm{TV}} \bigl(X,\mathcal{N}(0,1) \bigr)\le
C_k\sqrt{\frac{\E
[X^{2k} ]}{(2k-1)!!}-1},
\end{equation}
where the constant $C_k=\frac{4}{\sqrt{ 2k(k-1) \int_0^1 (({1+t^2})/{2})^{k-2} \,dt}}$.
\item[(2)] The moment estimate $\E[X^{2k}] \ge\E[N^{2k}]=(2k-1)!!$ holds.
\end{longlist}
\end{thm}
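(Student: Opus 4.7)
The plan is to deduce both statements from the decomposition $T_k = \sum_{i=2}^k \alpha_{i,k} W_i$ established in Proposition \ref{tenduduslip}, combined with the non-negativity $\E[W_i(X)]\geq 0$ from Lemma \ref{main1} and the Stein-Dirichlet bound of Theorem \ref{Gamma-bound-general}. Since under the hypothesis $\E[X^2]=1$ one has $\E[T_k(X)] = \E[X^{2k}] - (2k-1)!!$, and since all $\alpha_{i,k}>0$, item (2) is an immediate consequence: $\E[T_k(X)] = \sum_{i=2}^k \alpha_{i,k}\E[W_i(X)]\geq 0$.

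For item (1), I would start from Theorem \ref{Gamma-bound-general}, which gives $d_{\text{TV}}(X,\mathcal{N}(0,1))\leq \frac{2}{p}\sqrt{\mathrm{Var}(\Gamma[X])}$. The integration by parts formula \eqref{by-parts} with $\phi(x)=x$ yields $\E[\Gamma[X]] = -\E[X\LL X] = p$, so $\mathrm{Var}(\Gamma[X]) = \E[(\Gamma[X]-p)^2]$. The key step is then to dominate this variance by $\E[W_2(X)]$, which is exactly the estimate underlying Theorem \ref{Steindirichletfinal}: specializing the computation \eqref{computation1} to $k=2$ gives $(\LL+2p\,\Id)H_2(X) = 2(\Gamma(X)-p)$ (since $H_0=1$), so applying \eqref{Main-ineq} with $P=H_2$ together with $\mathcal{M}_2(H_2,H_2) = \tfrac{2p^2}{3}\E[W_2(X)]$ furnishes $\E[(\Gamma(X)-p)^2] \leq \tfrac{p^2}{3}\E[W_2(X)]$.

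Combining with the decomposition of Proposition \ref{tenduduslip}, positivity of the remaining coefficients gives $\alpha_{2,k}\,\E[W_2(X)] \leq \E[T_k(X)]$, hence
\begin{equation*}
d_{\text{TV}}(X,\mathcal{N}(0,1)) \;\leq\; \frac{2}{\sqrt{3\alpha_{2,k}}}\sqrt{\E[X^{2k}]-(2k-1)!!} \;=\; \frac{2\sqrt{(2k-1)!!}}{\sqrt{3\alpha_{2,k}}}\sqrt{\tfrac{\E[X^{2k}]}{(2k-1)!!}-1}.
\end{equation*}
The remaining task is purely computational: verify that $2\sqrt{(2k-1)!!}/\sqrt{3\alpha_{2,k}}$ equals the stated $C_k$. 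Plugging $i=2$ into the formula of Proposition \ref{tenduduslip} and performing the substitution $u=1-t^2$ transforms $\int_0^1 (1-u)^{-1/2}(1-u/2)^{k-2}\,du$ into $2\int_0^1 ((1+t^2)/2)^{k-2}\,dt$, yielding $3\alpha_{2,k} = \tfrac{(2k-1)!!\,k(k-1)}{2}\int_0^1((1+t^2)/2)^{k-2}\,dt$, and the constant $C_k = 4/\sqrt{2k(k-1)\int_0^1((1+t^2)/2)^{k-2}\,dt}$ follows after simplification.

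The conceptual ingredients are already assembled in earlier sections; the main delicate point is bookkeeping the constants, in particular ensuring that the substitution above recovers the exact form of $C_k$, and verifying that for $k=2$ the bound collapses back to $\frac{2}{\sqrt{3}}\sqrt{\E[X^4]-3}$ (here $\alpha_{2,2}=1$ and the integrand becomes the constant $1$, so $C_2=2/\sqrt{2\cdot 2\cdot 1} = 2/\sqrt{3}\cdot\sqrt{3/2}$... indeed one obtains $C_2=\tfrac{4}{\sqrt{4}}=2$, which after normalization by $\sqrt{(2k-1)!!}=\sqrt{3}$ reproduces \eqref{quantification}). No step of the plan is difficult individually; the whole argument is essentially an assembly of Lemma \ref{main1}, Proposition \ref{tenduduslip}, Remark \ref{mainrmk}, and Theorem \ref{Gamma-bound-general}.
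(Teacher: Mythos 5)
Your proof is correct and follows essentially the same route as the paper: both items reduce to the decomposition $T_k=\sum_i\alpha_{i,k}W_i$ of Proposition \ref{tenduduslip}, the positivity supplied by Lemma \ref{main1} and Remark \ref{mainrmk}, and the bound $d_{\text{TV}}\le\frac{2}{p}\sqrt{\mathrm{Var}(\Gamma[X])}$, with the paper likewise discarding all terms of the decomposition except the one in $W_2$ and using the same substitution $u=1-t^2$ to put the constant in the stated form. The only slip is the quoted value $\mathcal{M}_2(H_2,H_2)=\frac{2p^2}{3}\E[W_2(X)]$ (it should be $\frac{2p}{3}\E[W_2(X)]$), but the inequality $\E[(\Gamma(X)-p)^2]\le\frac{p^2}{3}\E[W_2(X)]$ you derive from it is the correct one, so nothing downstream is affected.
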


\begin{pf}
Taking into account Remark~\ref{mainrmk}, for any polynomial $P=\break \sum_{k=2}^m \alpha_k W_k$ in family $\PP$, we obtain that
\[
\E\bigl[P(X)\bigr]\geq\frac{1}{p^2}\sum_{k=2}^m
(2k-1) (k-1)\alpha_k\E \bigl[ H_{k-2} (X)^2
\bigl( \Gamma(X) - p \bigr)^2 \bigr].
\]
By applying the latter bound to $P=T_k$ and using Proposition~\ref
{tenduduslip}, we infer that
\begin{eqnarray*}
\E\bigl[T_k(X)\bigr]&\geq& \frac{1}{p^2}\sum
_{i=2}^m (2i-1) (i-1)\alpha_{i,k}\E
\bigl[ H_{i-2} (X)^2 \bigl( \Gamma(X) - p
\bigr)^2 \bigr]
\\
&\ge& \frac{3 \alpha_{2,k}}{p^2} \E \bigl[ \bigl( \Gamma(X) - p \bigr)^2
\bigr].
\end{eqnarray*}
On the other hand, Proposition~\ref{tenduduslip} shows that
\[
\alpha_{2,k}=\frac{(2k-1)!!}{6}\pmatrix{k
\cr
2}\int
_0^1 (1-u)^{-{1}/{2}} \biggl(1-
\frac{u}{2}\biggr)^{k-2}\,du.
\]
This leads us to
%
%e4.11 #&#
\begin{eqnarray}
 \qquad\E\bigl[X_n^{2k}\bigr]-(2k-1)!!
&\geq & \biggl(
\frac{(2k-1)!! }{4}k (k-1) \int_0 ^1
\frac{1}{\sqrt{1-u}} \biggl(1-\frac{u} 2 \biggr)^{k-2}\,du \biggr)
\nonumber
\\[-8pt]
\\[-8pt]
\nonumber
&&{}\times
\E \biggl[ \biggl(\frac{\Gamma(X_n)}{p}-1 \biggr)^2 \biggr].
\end{eqnarray}
Now,\vspace*{-1pt} the desired inequality follows from 
Theorem~\ref{Steindirichletfinal} and identity\break $\int_0 ^1 \frac{1}{\sqrt
{1-u}}(1-\frac{u} 2)^{k-2}\,du=2 \int_0^1 (\frac{1+t^2}{2})^{k-2} \,dt$. We
stress that with taking
$k=2$ in \eqref{boundclt}, we recover the well-known bound (see,
e.g., \cite{Optimal,n-p-2}):
\[
d_{\mathrm{TV}} \bigl(X_n,\mathcal{N}(0,1) \bigr)\le
\frac{2}{\sqrt
{3}}\sqrt{\E \bigl[X_n^4
\bigr]-3}.
\]
The\vspace*{1pt} second item (2) easily follows from the fact that $\E[T_k(X)] \ge
0$. When $\E[X^2]\neq1$, using the normalized random variable $\tilde
{X}=\frac{X}{\sqrt{\E[X^2]}}$, we obtain the inequality
$\E[X^{2k}] \ge\E^k[X^2] \E[N^{2k}]$ for all $k \ge1$.
\end{pf}

%re4.1 #&#
\begin{rmk}
The statement (2) of Theorem~\ref{last-thm} does not hold for any kind
of Markov operators. Below, we present a simple counterexample.\vspace*{1pt} Let $U$
denote a uniform random variable on the interval $(-1,1)$. Set
$X=U^2- \frac{1}{3}$. Then\vspace*{1pt} $X$ belongs to the second Wiener chaos of
the Jacobi structure (see \cite{a-c-p}, Section~4) with parameters
$\alpha=\beta=1$. Besides, $\E[X^2]=\frac{4}{45}$. Then it is
straightforward to check that the inequality $\E[X^{2k}] \ge\E[N^{2k}]
\E^k[X^2]$ in the item (2) of Theorem~\ref{last-thm} does not hold even
for $k=2$.
This is mainly because the assumption (c) fails in this setup. Roughly
speaking, the spectrum of Jacobi operators has a quadratic growth
whereas our assumption suggests a linear growth.
\end{rmk}

%re4.2 #&#
\begin{rmk}\label{studentexample}
Here, we give a concrete application of Theorem~\ref{last-thm} in some
situation where the usual criteria in the Wiener space fail. Let $\nu
\ge1$ be an integer number. Assume that $\{Q_n\}_{n \ge1}$ is a
sequence of i.i.d. random variables having chi-squared distribution
with $\nu$ degrees of freedom. We are also given $\{N_n\}_{n\ge1}$ an
independent sequence of i.i.d. standard Gaussian random variables. As a result,
$ \{ S_n \}_{n \ge1}= \{ N_n \times\sqrt{\frac{\nu
}{Q_n}}  \}_{n \ge1}$ is a sequence of i.i.d. Student random
variables with $\nu$ degrees of freedom. Now, set
\[
X=\sum_{1= i_1<i_2<\cdots<i_p}^{\infty} \alpha(i_1,
\ldots,i_p) S_{i_1}\cdots S_{i_p},
\]
such that $\E [X^2 ]=1$. Relying on the superposition
procedure (see Section~\ref{examples-assumptions}) and Theorem~\ref
{last-thm}, if $\nu> 2k$, it can be shown that
%
%e4.12 #&#
\begin{equation}
\label{student-ex}
d_{\mathrm{TV}} \bigl(X,\mathcal{N}(0,1) \bigr)\le
C_k \sqrt{\frac{\E
[X^{2k} ]}{(2k-1)!!}-1}.
\end{equation}
In addition, since $X$ does not have moments of all orders, $X$ does
not belong to any Wiener chaos and therefore the estimate (\ref
{student-ex}) is strictly beyond existing moments-based total-variation
estimates on Wiener space.
\end{rmk}

\subsection{Other polynomial criteria for central convergence}

In the previous section, in order to prove the even moment theorem, we
use heavily the fourth moment Theorem~\ref{Steindirichletfinal}. The
reason is that in the decomposition of $T_k$ over the
basis $\{W_k\}_{k\geq2}$, the coefficient $\alpha_2$ in front of $W_2$
is strictly positive. It is then natural to consider the cases where
$\alpha_2 =0$, which turns out to be more delicate. The main result
of this section is the following.

%th4.3 #&#
\begin{thm}\label{thm:main1}
Let $\LL$ be a general Markov generator satisfying assumptions
\textup{(a)}--\textup{(b)}--\textup{(c)} in Section~\ref{setup}. Assume that $\{X_n\}_{n \ge1}$ is
a sequence of eigenfunctions of~$\LL$ with eigenvalue $-p$, that is, $-
\LL X_n = p X_n$ for
each $n$. We suppose that $P=\sum_{k=2}^m \alpha_k W_k$ is a nonzero
polynomial belonging to the family $\PP$, such that as $n \to\infty$,
we have
%
%e4.13 #&#
\begin{equation}
\label{poly-condi}
\E\bigl[P(X_n)\bigr] \to\E\bigl[P(N)\bigr]=0.
\end{equation}

Then, as $n \to\infty$, the two following statements hold:
\begin{longlist}[(2)]
\item[(1)] If there exist at least two indices $2 <i < j$ such that
$\alpha_i \alpha_j>0$ and $i$ or $j$ is even, then
\[
X_n \stackrel{\mathit{law}} {\rightarrow}\mathcal{N}(0,1).
\]
\item[(2)] If there exist at least two indices $2< i < j$ such that
$\alpha_i \alpha_j>0$ and both $i$ and $j$ are odd integers, then each
accumulation point of sequence $\{X_n\}_{n \ge1}$ in distribution is
in the form
\[
\alpha\mathcal{N}(0,1)+(1-\alpha)\delta_0
\]
for some $\alpha\in[0,1]$.
\end{longlist}
\end{thm}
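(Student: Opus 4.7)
My plan is to distil from the hypothesis $\E[P(X_n)] \to 0$ a \emph{restricted Stein-type identity} for any distributional limit of $\{X_n\}$, and then to identify the admissible limits through the polynomial ideal generated by $H_{i-2}$ and $H_{j-2}$.

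First, I would exploit the non-negativity of each $\E[W_k(X_n)]$ granted by Lemma~\ref{main1}: since $P = \sum_k \alpha_k W_k$ has non-negative coefficients, the assumption $\E[P(X_n)] \to 0$ forces $\E[W_k(X_n)] \to 0$ for every $k$ with $\alpha_k > 0$, in particular for $k \in \{i, j\}$. Combining the spectral bound of Remark~\ref{mainrmk} with the identity $(\LL + kp\,\Id) H_k(X_n) = k(k-1) H_{k-2}(X_n)(\Gamma(X_n) - p)$ established in~\eqref{computation1}, I deduce that both $H_{i-2}(X_n)(\Gamma(X_n) - p)$ and $H_{j-2}(X_n)(\Gamma(X_n) - p)$ converge to $0$ in $L^2$.

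Next, I would extract a nice subsequential limit. The polynomial $P$ has positive leading coefficient in degree $2k_{\max}$ (where $k_{\max}$ is the largest index with $\alpha_{k_{\max}}>0$), so $\E[P(X_n)]\to 0$ combined with the hypercontractive estimate in Remark~\ref{Hyperconractivity} forces $\E[X_n^2]$ to remain bounded and hence controls all moments of $X_n$. Property~(c) places $\Gamma(X_n)$ in the fixed finite sum of chaoses $\bigoplus_{\ell\le 2p}\K(\LL+\ell\,\Id)$, so the pair $(X_n,\Gamma(X_n))$ is tight. Along a subsequence, $(X_n,\Gamma(X_n))\to (X_\infty,G_\infty)$ in distribution with convergence of every moment; passing to the limit in the $L^2$ convergence above yields $H_{i-2}(X_\infty)(G_\infty-p)=0$ and $H_{j-2}(X_\infty)(G_\infty-p)=0$ almost surely. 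Using the integration-by-parts formula~\eqref{by-parts} on $X_n$, I obtain $\E[F'(X_n)(\Gamma(X_n)-p)] = p\,\E[X_n F(X_n)-F'(X_n)]$ for every polynomial $F$; when $F'$ belongs to the ideal $I:=(H_{i-2},H_{j-2})\subset \R[T]$, a direct Cauchy--Schwarz shows the left-hand side tends to $0$, giving the restricted Stein identity
\[
\E\big[X_\infty F(X_\infty) - F'(X_\infty)\big]=0 \quad \text{for every polynomial } F \text{ with } F'\in I.
\]

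Finally, I would identify the limit via the principal ideal $I=(\gcd(H_{i-2},H_{j-2}))$. A classical property of Hermite polynomials, easily deduced from the three-term recurrence together with the fact that $H_m$ and $H_{m-1}$ never share a zero, is that for $m\ne n$ the polynomials $H_m,H_n$ have no common real root except possibly $0$, with $0$ a common root if and only if both $m$ and $n$ are odd. In case~(1), at least one of $i,j$ is even, hence $I=\R[T]$; the Stein identity above holds for every polynomial $F$ and, by density, characterizes the standard Gaussian law, so $X_\infty\sim\mathcal{N}(0,1)$. In case~(2), both $i,j$ are odd, hence $I=(T)$ and the identity holds for every polynomial $F$ with $F'(0)=0$. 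Applying it to polynomial approximants of $F(x)=e^{itx}-1-itx$ and using $\E[X_\infty]=0$ (inherited from $\E[X_n]=0$), I obtain for $\phi(t):=\E[e^{itX_\infty}]$ the linear ODE $\phi'(t)+t\phi(t)=t(1-\sigma^2)$ with $\sigma^2:=\E[X_\infty^2]\in[0,1]$; solving under $\phi(0)=1$ yields $\phi(t)=(1-\sigma^2)+\sigma^2 e^{-t^2/2}$, the characteristic function of $\sigma^2\mathcal{N}(0,1)+(1-\sigma^2)\delta_0$. I expect the main obstacle to be the Hermite coprimality statement together with the density argument needed to upgrade the polynomial Stein identity into a genuine distributional characterization.
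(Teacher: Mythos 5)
Your proposal is correct in its overall architecture and coincides with the paper's proof in its first half: both extract $\E[W_i(X_n)],\E[W_j(X_n)]\to 0$ from the non-negativity in Lemma \ref{main1}, convert this via Remark \ref{mainrmk} and \eqref{computation1} into $H_{i-2}(X_n)(\Gamma(X_n)-p)\to 0$ and $H_{j-2}(X_n)(\Gamma(X_n)-p)\to 0$ in $L^2$, pass to a joint subsequential limit $(X_\infty,G_\infty)$, and invoke the root structure of Hermite polynomials. You then diverge. In case (1) the paper concludes that $G_\infty=p$ a.s., hence $\Gamma(X_n)\to p$ in $L^2$, and applies Theorem \ref{Gamma-bound-general}; you instead run the method of moments through the Stein identity with $I=\R[T]$ and moment-determinacy of the Gaussian. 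Both work; the paper's route is what makes the argument quantitative, yours is not, but that is irrelevant for the stated conclusion. In case (2) the paper derives the limiting identity $\E[\phi'(U)V]=p\E[U\phi(U)]$ for bounded test functions and splits over $\{V=p\}$ and $\{V\neq p\}$ using $U(V-p)=0$; your restricted Stein identity over the ideal $(H_{i-2},H_{j-2})=(T)$ is a clean and genuinely different packaging of the same information, and it leads to the same ODE for the characteristic function.

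Two points need care. First, in case (2) you cannot directly pass from polynomial approximants of $F(x)=e^{itx}-1-itx$ to the limit: eigenfunctions of order $p\ge 3$ (and their distributional limits, a priori) need not have exponential moments, so dominated convergence for the Taylor series is not available at the outset. The fix stays inside your framework: first apply the identity with $F=T^k$, $k\ge 2$, to get $\E[X_\infty^{k+1}]=k\,\E[X_\infty^{k-1}]$, whence $\E[X_\infty^{2m}]=\sigma^2(2m-1)!!$ and vanishing odd moments; this moment growth \emph{then} yields $\E[e^{|tX_\infty|}]<\infty$ and you can sum the series (or solve the ODE) to get $\phi(t)=(1-\sigma^2)+\sigma^2e^{-t^2/2}$, with $\sigma^2\le 1$ forced by $\P(X_\infty=0)=1-\sigma^2\ge 0$. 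Second, your claim that the common-root statement for Hermite polynomials is ``easily deduced from the three-term recurrence'' is optimistic: the recurrence only rules out common zeros of \emph{consecutive} Hermite polynomials, while the general statement (no common zeros except $0$, which occurs only for two odd indices) rests on deeper facts (essentially Schur's irreducibility theorems). The paper asserts the same fact without proof, so this is not a gap relative to the paper, but you should not present it as elementary.
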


\begin{pf}
We will consider each case separately.
\begin{longlist}[\textit{Case} (1):]
\item[\textit{Case} (1):]  Let us notice that there exist $A>0$ and $B\in\R$
such that $\forall x\in\R,  P(x)\ge Ax^2+B$. Then $Ax^2<P(x)-B$. By
assumption, $\E[P(X_n)]\rightarrow0$, so $\E[P(X_n)-B]$ is bounded and
$\E[X_n^2]$ is bounded as
well. Hence, by Lemma~\ref{Hypercontract}, the sequence $\{X_n\}_{n\ge
1}$ is bounded in $L^p(E,\mu)$ for each $p\ge1$. Since $\Gamma
(X_n)=\frac{1}{2}(\LL+2p\Id)[X_n^2]$, and because of the fact that $\LL
$ is a continuous operator when its domain is restricted to a finite
sum of eigenspaces of $\LL$, $\Gamma(X_n)$ is also bounded in any
$L^p(E,\mu)$. Finally, up to extracting a subsequence, we may assume
that the sequence of random vectors $ \{(X_n,\Gamma(X_n)) \}
_{n\ge1}$ converges in distribution toward a random vector $(U,V)$. As
a consequence of Remark~\ref{mainrmk}, we have
\begin{eqnarray*}
\E\bigl[H_{i-2}(X_n)^2 \bigl(
\Gamma[X_n]-p \bigr)^2\bigr] &\to & 0,
\\
\E\bigl[H_{j-2}(X_n)^2 \bigl(
\Gamma[X_n]-p \bigr)^2\bigr] &\to& 0.
\end{eqnarray*}

Recalling that $\{(X_n,\Gamma(X_n))\}_{n\ge1}$ converges in
distribution toward $(U,V)$, we infer that almost surely
%
%e4.14 #&#
\begin{equation}
\label{intermediarystep}
H_{i-2}(U) (V-p )=H_{j-2}(U) (V-p )=0.
\end{equation}
Thus, on the set $\{V\neq p\}$, we have $H_{i-2}(U)=H_{j-2}(U)=0$. But
the roots of two Hermite polynomials of different orders are distinct
if at least one of the orders is even. By assumption, either
$i-2$ or $j-2$ is even, and we conclude that $\P (V\neq p )=0$.
This proves that any accumulation point (in distribution) of the
sequence $\{\Gamma(X_n)\}_{n\ge1}$ is $p$, and, as a consequence, the
sequence $\Gamma(X_n)$ converges to $p$ in~$L^2$. Now, we can conclude
by using Theorem~\ref{Gamma-bound-general}.

\item[\textit{Case} (2):]  Following the same line of reasoning as in case (1),
we obtain:
\[
H_{i-2}(U) (V-p )=H_{j-2}(U) (V-p )=0,\qquad  \mbox{a.s.}
\]
On the set $\{V\neq p\}$, we have $H_{i-2}(U)=H_{j-2}(U)=0$. But the
roots of two Hermite polynomials with odd orders only coincide at $0$.
This implies $U (V-p)=0$ almost surely. Now, let $\phi$ be any test
function. Using the integration by parts formula \eqref{by-parts} with
$Y=X_n$ and $X=\phi(X_n)$ and letting $n \rightarrow+\infty$, one
leads to
%
%e4.15 #&#
\begin{equation}
\label{intermediary2}
\E\bigl[\phi'(U) V\bigr]=p\E\bigl(U\phi(U)\bigr].
\end{equation}
Splitting the expectations in \eqref{intermediary2} into the disjoint
sets $\{V=p\}$ and $\{V\neq p\}$, we obtain
%
%e4.16 #&#
\begin{equation}
\label{intermediary3}
p\E\bigl[ \bigl(\phi'(U)-U\phi(U) \bigr)
\mathbh{1}_{\{V=p\}}\bigr]+\phi'(0)\E[V\mathbh{1}_{\{V\neq p\}}]=0.
\end{equation}
Take $\phi(x)=e^{i\xi x}$. Then \eqref{intermediary3} reads
\[
p i\xi\E\bigl[e^{i\xi U}\mathbh{1}_{\{V=p\}}\bigr]-p\E\bigl[U
e^{i\xi U}\mathbh{1}_{\{
V=p\}}\bigr]+i \xi\E[V\mathbh{1}_{\{V\neq p\}}]=0.
\]
Setting $f(\xi)=\E[e^{i\xi U}\mathbh{1}_{\{V=p\}}]$, we obtain that
\begin{eqnarray*}
&\ds p \xi f(\xi)+p f'(\xi)+\xi\E[V\mathbh{1}_{\{V\neq p\}}]=0,&
\\
&\ds f(\xi)= \biggl(\P(V=p)- \frac{1}{p} \E[V] \biggr)+ \frac{1}{p}
\E[V] e^{-{\xi^2}/{2}}.&
\end{eqnarray*}
It is straightforward to deduce from above equations that the
characteristic function of random variable $U$ is given by
\begin{eqnarray*}
\E\bigl[e^{i\xi U}\bigr]&=&\P(V\neq p)+f(\xi)
\\
%&=&\P(V\neq p)+\Big( \P(V=p) - \frac{1}{p} \E[V] \Big) + \frac{1}{p}
%\E[V] e^{-\frac{\xi^2}{2}}\\
&=& \biggl(1- \frac{1}{p} \E[V] \biggr) +
\frac{1}{p} \E[V] e^{-{\xi^2}/{2}}.
\end{eqnarray*}
\end{longlist}
%\upqed
\upqed\end{pf}

Although case (2) in Theorem~\ref{thm:main1} seems less interesting
than case (1), we point out that a Dirac mass at zero may appear
naturally under assumptions (a)--(b)--(c). Here is a simple example of this
phenomenon.

%ex4.1 #&#
\begin{exm}
Set $E=\R^2$ and $\mu=\mathcal{N}(0,1)\otimes (\frac{1}{2}\delta
_0+\frac{1}{2}\delta_1 )$. Define
%
%e4.17 #&#
\begin{equation}
\label{newgene}
\LL[\phi](x,y)=y \biggl(\frac{\partial^2\phi}{\partial x ^2} - x\frac
{\partial\phi}{\partial x}
\biggr).
\end{equation}
One can check that $\LL$ fulfills assumptions (a)--(b)--(c) in Section~\ref{setup}. Consider the sequence
\[
X_n(x,y)=x y \in\K(\LL+\Id),\qquad n\ge1.
\]
Then $X_n\sim\frac{1}{2}\mathcal{N}(0,1)+\frac{1}{2}\delta_0$ for each
$n\ge1$. Moreover, $\E [W_3(X_n) ]=\break \E [W_5(X_n)
]=0$. As a matter of fact, the conclusions of Theorem~\ref{thm:main1}
are sharp when applied to $P=W_3+W_5 \in\PP$.
\end{exm}

However, we show that in the particular setting of the Wiener space,
that is, when $\LL$ is the Ornstein--Uhlenbeck operator, the case (2)
of Theorem~\ref{thm:main1} cannot take place. Furthermore, condition
(\ref{poly-condi}) will be a necessary and sufficient condition for
central convergence. To this end, we need the following lemma, which
has an interest on its own.

%le4.3 #&#
\begin{lma}\label{product-lemma}
Let $\{ U_n\}_{n \ge1}$ and $\{V_n\}_{n \ge1}$ be two bounded
sequences such that for some integer $M>0$, we have
\[
U_n, V_n \in \bigoplus_{i=0}^M
\K(\LL+ i \Id)\qquad \forall n\in\N.
\]
If $\E [ U_n^2 V_n^2 ] \to0$ as $n$ tends to infinity, then
$\E[U_n^2]\E[V_n^2] \to0$ as $n$ tends to infinity.
\end{lma}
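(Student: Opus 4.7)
I would argue by contradiction, using normalisation, hypercontractivity (Remark \ref{Hyperconractivity}), and a small-ball estimate for normalised elements of $\bigoplus_{i\le M}\K(\LL+i\Id)$.

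Suppose toward contradiction that $\E[U_n^2 V_n^2]\to 0$ while $\E[U_n^2]\,\E[V_n^2]\not\to 0$. Up to extracting a subsequence, $\|U_n\|_2,\|V_n\|_2\ge c>0$. Setting $\tilde U_n := U_n/\|U_n\|_2$ and $\tilde V_n := V_n/\|V_n\|_2$, I obtain two sequences in $\bigoplus_{i\le M}\K(\LL+i\Id)$ with $\|\tilde U_n\|_2=\|\tilde V_n\|_2=1$ and $\|\tilde U_n\tilde V_n\|_2\to 0$. The workhorse inequality is, for every $\eta>0$,
\[
1 \;=\; \|\tilde V_n\|_2^2 \;\le\; \eta^{-2}\,\E[\tilde U_n^2\tilde V_n^2] \;+\; \|\tilde V_n\|_4^2\,\mu\!\bigl(|\tilde U_n|\le \eta\bigr)^{1/2},
\]
obtained by a Markov bound on $\{|\tilde U_n|>\eta\}$ and Cauchy--Schwarz on the complement. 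Hypercontractivity applied inside $\bigoplus_{i\le M}\K(\LL+i\Id)$ yields $\|\tilde V_n\|_4^2\le K_M$, uniformly in $n$.

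The remaining ingredient is a uniform small-ball estimate
\[
\mu\!\bigl(|\tilde U_n|\le \eta\bigr)\le \psi_M(\eta), \qquad \psi_M(\eta)\xrightarrow[\eta\to 0^+]{}0,
\]
valid for all normalised elements of the same space. Given such a $\psi_M$, picking $\eta$ with $K_M\,\psi_M(\eta)^{1/2}<1/2$ lets me absorb the second term and conclude $1\le 2\eta^{-2}\,\E[\tilde U_n^2\tilde V_n^2]\to 0$, the desired contradiction.

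The main obstacle is precisely producing $\psi_M$. In the Wiener setup this is the Carbery--Wright inequality applied to polynomials of degree at most $M$ in Gaussian variables, and the Laguerre structure as well as the tensorisation/superposition constructions of Subsection \ref{examples-assumptions} inherit an analogous bound. In the purely axiomatic (a)--(b)--(c) framework, however, hypercontractivity alone does \emph{not} suffice: Paley--Zygmund applied to $\tilde U_n^2$ only gives $\mu(|\tilde U_n|>\eta)\ge 1/K_M^2$ for small $\eta$, which stays bounded away from $1$ and so cannot feed the absorption. One therefore has to complement (a)--(b)--(c) with a non-degeneracy statement -- no nonzero element of $\bigoplus_{i\le M}\K(\LL+i\Id)$ vanishes on a set of positive measure -- together with a quantitative version of it to obtain $\psi_M$. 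As an alternative route I could first pass to a distributional limit $(U,V)$ of $(\tilde U_n,\tilde V_n)$ along a subsequence, free from hypercontractivity by tightness and uniform integrability, deduce $UV=0$ a.s.\ with $\E[U^2]=\E[V^2]=1$, and derive a contradiction from the no-atom-at-zero property of the limit marginals; but this property once again reduces to the same small-ball input.
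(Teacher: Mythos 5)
Your proposal is correct and follows essentially the same route as the paper's proof: the same splitting over $\{|U_n|\le \alpha\}$ and its complement, hypercontractivity to bound the fourth moment, and the Carbery--Wright inequality (applied after approximating elements of $\bigoplus_{i\le M}\K(\LL+i\Id)$ by polynomials of degree $M$) to supply the small-ball estimate. Your observation that axioms (a)--(b)--(c) alone do not furnish this small-ball bound is consistent with the paper, whose proof explicitly restricts to the Wiener structure, the only setting in which the lemma is subsequently used.
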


We will make use of the next theorem, due to Carbery--Wright, restated
here for convenience. More precisely, we will apply it to Gaussian
distribution, which is log-concave.
%We emphasize that this is the main ingredient of our proof because of
%the fact that the Normal and Gamma distributions are log-concave, so
%that one can use the Carbery-Wright inequality.

%th4.4 #&#
\begin{thm}[(\cite{c-w}, Carbery--Wright)]\label{cw-thm}
Assume that $\mu$ is a log-concave probability measure on $\R^m$. Then
there exists an absolute constant $c>0$ (\textrm{independent of $m$ and
$\mu$}) such that for any polynomial $Q\dvtx \R^m\to\R$ of degree at most
$k$ and any
$\alpha>0$, the following estimate holds:
%
%e4.18 #&#
\begin{equation}
\label{cw-ineq}
\biggl(\int Q^2\,d\mu \biggr)^{{1}/({2k})}\times\mu
\bigl\{ x\in\R^m\dvtx  \bigl|Q(x)\bigr|\leq\alpha \bigr\} \leq c k
\alpha^{{1}/k}.
\end{equation}
\end{thm}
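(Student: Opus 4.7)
The inequality (\ref{cw-ineq}) is homogeneous of degree zero in the pair $(Q,\alpha)$: after replacing $Q$ by $Q/\|Q\|_{L^{2}(\mu)}$ and $\alpha$ by $\alpha/\|Q\|_{L^{2}(\mu)}$, it suffices to prove that whenever $\int Q^{2}\,d\mu=1$, one has $\mu\{|Q|\leq\alpha\}\leq c\,k\,\alpha^{1/k}$. I would attack this reduced statement in two stages: first a dimension reduction from $\R^{m}$ down to the real line, then a factorization-based argument in one variable.

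For the first stage, the plan is to apply the localization principle of Kannan--Lov\'asz--Simonovits (equivalently, iterated Pr\'ekopa--Leindler): bisect $\R^{m}$ by hyperplanes chosen so that both $\int Q^{2}\,d\mu$ and $\mu\{|Q|\leq\alpha\}$ split in the same ratio on the two half-spaces. Since conditional distributions of log-concave measures remain log-concave, iterating these bisections concentrates the mass onto a one-dimensional ``needle'' on which $\mu$ restricts to a log-concave probability measure. The target inequality is preserved under such convex bisections, so the theorem reduces to its $m=1$ version.

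In the second stage I would factor the one-variable polynomial as $Q(x)=a\prod_{j=1}^{k}(x-z_{j})$ with $z_{j}\in\C$ and apply Markov's inequality at the critical exponent $s=1/k$:
\[
\mu\{|Q|\leq\alpha\}\,\leq\,\alpha^{1/k}\int|Q|^{-1/k}\,d\mu\,=\,\alpha^{1/k}|a|^{-1/k}\int\prod_{j=1}^{k}|x-z_{j}|^{-1/k}\,d\mu(x).
\]
To control the remaining product, one would apply H\"older's inequality with exponents slightly larger than $k$ so that each $\int|x-z_{j}|^{-p}\,d\mu$ converges (log-concave densities are bounded and have exponential tails, so negative powers strictly weaker than $1$ are integrable). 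Combining the resulting bounds with Borell's reverse-H\"older inequalities for log-concave measures (``the $L^{p}$ and $L^{q}$ norms of a degree-$k$ polynomial are comparable, up to a factor $C^{k}$'') allows one to re-absorb the $|a|^{-1/k}$ prefactor into a quantity equivalent to $\|Q\|_{L^{2}(\mu)}^{1/k}$, producing the desired bound.

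The real subtlety lies in this last stage, in extracting the sharp linear factor $c\,k$ rather than $k^{\beta}$ with $\beta>1$. A crude combination of H\"older and AM--GM loses a factor polynomial of higher degree in $k$; reaching the claimed $ck$ forces one to treat carefully the regime in which several roots $z_{j}$ cluster near $\mathrm{supp}(\mu)$, where the local small-ball bound ``$\mu\{|x-z|\leq t\}\leq t\,\|\rho_{\mu}\|_{\infty}$'' must be combined with Borell--Berwald-type reverse-H\"older inequalities and with a careful geometric accounting of the locations of the roots. This delicate balancing is the technical heart of Carbery and Wright's original argument and the main obstacle one has to overcome to obtain the inequality in the form stated.
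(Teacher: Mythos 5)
This statement is not proved in the paper at all: it is the Carbery--Wright inequality, imported verbatim from \cite{c-w} and used as a black box in the proof of Lemma \ref{product-lemma}. So there is no internal proof to compare yours against; what you have written is a sketch of how the external result itself might be established, and it should be judged on those terms.

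As such a sketch, your outline captures the broad architecture found in the literature (reduction to one dimension, then a distributional estimate for one-variable polynomials via their roots), but two steps are asserted rather than proved, and both are where the actual difficulty lives. First, the localization step: KLS-type bisection applies to assertions that are linear in the measure (``$\int f\,d\mu\le 0$ for all needles implies the inequality''), whereas (\ref{cw-ineq}) is a \emph{product} of $\left(\int Q^{2}\,d\mu\right)^{1/2k}$ and $\mu\{|Q|\le\alpha\}$; you cannot bisect so that both quantities ``split in the same ratio'' and conclude directly, and turning the product form into a localizable statement (e.g.\ by fixing one integral and extremizing the other) is itself a nontrivial reduction that you do not carry out. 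Second, the one-dimensional endgame: after Markov's inequality you face $\int\prod_{j}|x-z_{j}|^{-1/k}\,d\mu$, and generalized H\"older with exponents near $k$ puts you exactly at the borderline power $|x-z_{j}|^{-1}$, which is \emph{not} integrable against a bounded density when $z_{j}$ lies in the support; backing off to power $1-\delta$ introduces a factor involving $\|\rho_{\mu}\|_{\infty}^{1-\delta}$, which is not scale-invariant and is not controlled by $\|Q\|_{L^{2}(\mu)}$ alone. Reconciling this with the normalization, and extracting the linear factor $c\,k$, is precisely the content of Carbery and Wright's argument, and your proposal explicitly defers it. In short: the plan is directionally reasonable but contains no complete proof of either of its two stages; for the purposes of this paper the correct move is simply to cite \cite{c-w}, as the authors do.
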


\begin{pf*}{Proof of Lemma~\ref{product-lemma}}
Let us denote $E=\R^\N, \mu=\mathcal{N}(0,1)^{\otimes\N}$ and let $\LL
$ be the Ornstein--Uhlenbeck generator. We assume that $\E [U_n^2
]$ does not converge to zero. Up to extracting a subsequence, we can
suppose that
$\E [U_n^2 ]>\theta>0$ for each $n\ge1$. Following the method of
\cite{n-po-2}, page~659, inequality (3.21), we can approximate in
$L^2(E,\mu)$ the random variable $U_n$ by polynomials of degree $M$.
Hence, applying the Carbery--Wright inequality for the approximating
sequence, and taking the limit, we obtain
%
%e4.19 #&#
\begin{equation}
\label{ineqCW}
\mu\bigl\{x\in E\dvtx  \bigl|U_n(x)\bigr|\leq\alpha\bigr\}\leq
\frac{c M \alpha^{1/M}}{\theta
^{1/2M}}\leq K \alpha^{1/M},
\end{equation}
with $K=\frac{cM}{\theta^{1/2M}}$. Next, we have the following inequalities:
\begin{eqnarray*}
\E \bigl[V_n^2 \bigr]&=&\E \biggl[V_n^2
\frac{U_n^2}{U_n^2} \textbf{1}_{\bigl\{
|U_n|>\alpha\bigr\}} \biggr]+\E \bigl[V_n^2
\textbf{1}_{\bigl\{|U_n|\leq\alpha\bigr\}} \bigr]
\\
&\leq&\frac{1}{\alpha^2}\E \bigl[U_n^2
V_n^2 \bigr]+\sqrt{\E \bigl[V_n^4
\bigr]}\sqrt{\mu\bigl\{x\in E\dvtx  \bigl|U_n(x)\bigr|\leq\alpha\bigr\}}
\\
&\le&\frac{1}{\alpha^2}\E \bigl[U_n^2
V_n^2 \bigr]+C K \alpha^{1/2M},
\end{eqnarray*}
where $K$ is the constant from the Carbery--Wright inequality and $C$
is such that $\sup_{n\ge1}\E [V_n^4 ]\leq C^2$. Note\vspace*{1pt} that
constant $C$ exists by hypercontractivity
(see Remark~\ref{Hyperconractivity}). We immediately deduce that
\[
\limsup_{n\to\infty}\E \bigl[V_n^2 \bigr]
\leq C K \alpha^{1/2M},
\]
which is valid for any $\alpha>0$. Let $\alpha\to0$ to achieve the proof.
\end{pf*}

%th4.5 #&#
\begin{thm}\label{main}
Let $\LL$ stand for the Ornstein--Uhlenbeck operator and let $\{X_n\}
_{n\geq1}$ be a sequence of elements of $\K(\LL+p\Id)$ with variance
bounded from below by some positive constant. Then, for any nonzero
polynomial $P\in\PP$, as $n \to\infty$, we have
\[
X_n \stackrel{\mathit{law}} {\rightarrow} \mathcal{N}(0,1)\quad \mbox{if
and only if}\quad \E\bigl[P(X_n)\bigr]\to0.
\]
\end{thm}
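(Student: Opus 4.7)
The ``if'' direction is immediate: once $X_n \stackrel{\text{law}}{\to}\mathcal{N}(0,1)$, Lemma \ref{Hypercontract} gives uniform integrability of all powers, so $\E[P(X_n)] \to \E[P(N)]$, which vanishes by Lemma \ref{main1}. For the ``only if'' direction, write $P = \sum_{k=2}^m \alpha_k W_k$ with $\alpha_k \ge 0$ and at least one strictly positive. Because $\E[W_k(X_n)] \ge 0$ for every $k$ (Lemma \ref{main1}), the hypothesis $\E[P(X_n)] \to 0$ forces $\E[W_k(X_n)] \to 0$ for each $k$ with $\alpha_k > 0$. Combining the diffusion identity $(\LL + kp\,\Id) H_k(X_n) = k(k-1)\,H_{k-2}(X_n)(\Gamma(X_n) - p)$ with Remark \ref{mainrmk} then yields the crucial bound
\begin{equation*}
\E\!\left[H_{k-2}(X_n)^2\,(\Gamma(X_n) - p)^2\right] \,\le\, \frac{p^2}{(k-1)(2k-1)} \,\E[W_k(X_n)] \,\longrightarrow\, 0.
\end{equation*}

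The next step is to decouple the two factors via Lemma \ref{product-lemma}. Applying it to $U_n = H_{k-2}(X_n)$ and $V_n = \Gamma(X_n) - p$, which both sit in a finite sum of eigenspaces and are uniformly bounded in every $L^q$ by hypercontractivity (the a priori upper bound on $\E[X_n^2]$ coming from the fact that $P(x) \ge Ax^2 + B$ for some $A > 0$ since the top-degree term of $P$ is positive and of even degree), gives
\begin{equation*}
\E[H_{k-2}(X_n)^2]\cdot \E[(\Gamma(X_n) - p)^2] \,\longrightarrow\, 0.
\end{equation*}
If $\alpha_2 > 0$, taking $k = 2$ makes $H_{k-2} \equiv 1$ and directly yields $\Gamma(X_n) \to p$ in $L^2$. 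Otherwise I fix any $k > 2$ with $\alpha_k > 0$ and argue by contradiction: if $\E[(\Gamma(X_n) - p)^2] \not\to 0$, one extracts a subsequence along which $\E[H_{k-2}(X_n)^2] \to 0$ and, by tightness from the moment bounds, $X_n \to X_\infty$ in law; then $\E[H_{k-2}(X_\infty)^2] = 0$ and the variance lower bound $\E[X_\infty^2] \ge \theta > 0$ force $X_\infty$ to be a non-trivial law supported on the finite root set of $H_{k-2}$, hence carrying at least one atom.

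The hard part is exactly to rule out this last possibility. I would proceed in the spirit of the proof of Lemma \ref{product-lemma}: approximate $X_n - a$ in $L^2$ by polynomials of degree at most $p$ in the underlying Gaussians, apply Carbery--Wright (Theorem \ref{cw-thm}) to each approximation, and pass to the limit to obtain the uniform estimate
\begin{equation*}
\P\bigl(|X_n - a| \le \varepsilon\bigr) \,\le\, \frac{c\,p\,\varepsilon^{1/p}}{\bigl(\E[X_n^2] + a^2\bigr)^{1/(2p)}} \,\le\, \frac{c\,p\,\varepsilon^{1/p}}{\theta^{1/(2p)}}
\end{equation*}
valid for every $n \ge 1$ and every $a \in \R$. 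The Portmanteau theorem transfers this inequality to $X_\infty$, yielding $\P(X_\infty = a) = 0$ for every $a$, which contradicts the atomicity obtained above.

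Consequently $\E[(\Gamma(X_n) - p)^2] \to 0$. Since integration by parts gives $\E[\Gamma(X_n)] = p\,\E[X_n^2]$, this forces $\E[X_n^2] \to 1$, and Theorem \ref{Gamma-bound-general} delivers the desired central convergence $X_n \stackrel{\text{law}}{\to} \mathcal{N}(0,1)$.
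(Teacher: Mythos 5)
Your proof is correct, and it shares the paper's overall skeleton---reduce to $\E\big[H_{k-2}(X_n)^2(\Gamma(X_n)-p)^2\big]\to 0$ via Remark \ref{mainrmk}, decouple the two factors with Lemma \ref{product-lemma}, and finish with Theorem \ref{Gamma-bound-general}---but it diverges at the crucial decoupling step. The paper takes $m$ to be the top index with $\alpha_m>0$, writes the monic polynomial $H_{m-2}$ as the product of its linear factors over its real roots $t_1,\dots,t_{m-2}$, and applies Lemma \ref{product-lemma} iteratively to peel these off one at a time; since $\E[(X_n-t_j)^2]\ge \text{Var}(X_n)\ge\theta$ for every root, the variance hypothesis kills each factor directly and yields $\E[(\Gamma(X_n)-p)^2]\to 0$ with no case distinction and no subsequence extraction. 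You instead apply the lemma once to $U_n=H_{k-2}(X_n)$ as a whole, and must then exclude the alternative $\E[H_{k-2}(X_n)^2]\to 0$ by a soft argument: tightness, passage to a weak limit $X_\infty$ supported on the finite zero set of $H_{k-2}$, and a second, direct invocation of Carbery--Wright plus Portmanteau to show $X_\infty$ is atomless. This is valid---the uniform anti-concentration bound does follow from $\E[(X_n-a)^2]=\E[X_n^2]+a^2\ge\theta$, using that $\E[X_n]=0$ for eigenfunctions with $p\ge1$---but it duplicates the Carbery--Wright machinery already buried in the proof of Lemma \ref{product-lemma}, whereas the paper's factorization extracts everything it needs from that single lemma and stays quantitative. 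On the other hand, your write-up is more careful on two points the paper leaves implicit: the uniform integrability needed to pass from $X_n\stackrel{\text{law}}{\rightarrow}N$ to $\E[P(X_n)]\to\E[P(N)]$ in the ``if'' direction, and the observation that $\E[\Gamma(X_n)]=p\,\E[X_n^2]$ forces $\E[X_n^2]\to1$ so that Theorem \ref{Gamma-bound-general} actually applies.
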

%
%\begin{rmk}
%More particularly, if $P(0)\neq0$ then $\E\left[P(X_n)\right]\to0~
%\Rightarrow X_n\xrightarrow[n\to\infty]{\mbox{Law}}~\mathcal{N}(0,1).$
%\end{rmk}

\begin{pf}
Although in Theorem~\ref{main} we assume that $\LL$ is the
Ornstein--Uhlenbeck generator, we stress that the proof works in the
Laguerre structure or any tensor products of Laguerre and Wiener structures.
The ``if'' part is straightforward by using the continuous mapping
theorem. To show the ``only if'' part, we take a nonzero polynomial
$P\in\PP$ of the form
\[
P(x)=\sum_{k=2}^m \alpha_k
W_{k}(x),
\]
with $\alpha_m >0$. Thanks to Remark~\ref{mainrmk}, as $n \to\infty$,
we know that
%
%e4.20 #&#
\begin{equation}
\E \bigl[H_{m-2}(X_n)^2\bigl(
\Gamma(X_n)-p\bigr)^2 \bigr] \rightarrow0.
\end{equation}
Let $Z_{m-2}=\{ t_1, t_2, \ldots, t_{m-2}\}$ be the set of the (real)
roots of the Hermite polynomial $H_{m-2}$. Then, as $n \to\infty$, we have
\[
\E \Biggl[ \Biggl(\prod_{k=1}^{m-2}
(X_n - t_k)^2 \Biggr) \bigl(\Gamma
(X_n)-p\bigr)^2 \Biggr] \to0.
\]

From the fact that $\Gamma(X_n)=\frac{1}{2}(\LL+p\Id)(X_n^2)$ together
with fundamental assumption (\ref{fundamental-assumtion}) (which
holds in the Wiener structure), we deduce that $H_{m-2}(X_n)$ and
$\Gamma(X_n)-p$ are both finitely expanded over the eigenspaces of the
generator $\LL$. Besides, repeating the same argument as in the proof
of Theorem~\ref{thm:main1}, we can show that the sequence
$\{X_n\}_{n\ge1}$ is bounded in $L^2(E,\mu)$, as well as $\{ \Gamma
(X_n)-p\}_{n\ge1}$. Thus, from Lemma~\ref{product-lemma}, as $n \to
\infty$, we obtain
\[
\Biggl(\prod_{k=1}^{m-2} \E
\bigl[(X_n - t_k)^2 \bigr] \Biggr) \E \bigl[
\bigl(\Gamma (X_n)-p\bigr)^2 \bigr] \to0.
\]

Since $\E [(X_n - t_k)^2 ]\geq\operatorname{Var}(X_n)$ is bounded from
below by assumption, we conclude that $\Gamma(X_n)\to p$ in $L^2(E)$.
Hence, using Theorem~\ref{Gamma-bound-general}, we obtain that the
sequence $\{X_n\}_{n \ge1}$ converges in distribution toward $\mathcal{N}(0,1)$.
\end{pf}

%s5 #&#
\section{Conjectures}

The main motivation of this article is to provide an answer to the
question (B) stated in the \hyperref[sec1]{Introduction}. We have shown that the
convergence of any even moment guarantees the central convergence of a
normalized sequence (i.e., $\E[X_n^2] \to1$) living inside $\K(\LL+p
\Id)$. In the latter criterion, we have dealt with normalized sequences
because it seems more natural from the probabilistic
point of view. However, one could also try to replace this assumption
by the convergence of another even moment. Indeed, our framework could
provide a wider class of polynomial conditions ensuring central
convergence, namely through the family $\PP$. Then it is natural to
check whether the family $\PP$ is rich enough to produce other pair of
even moments ensuring a criterion for central convergence. To be more
precise, assume that for some pair $(k,l)$ ($k <l$) of positive
integers, we have $\E[X_n^{2k}]\to\E[N^{2k}]$ and $\E[X_n^{2l}]\to\E
[N^{2l}]$, we want to know if this implies a central convergence. Our
method would consist in deducing the
existence of a nontrivial polynomial $T_{k,l} \in\PP$ such that
$\E [ T_{k,l}(X_n)  ] \rightarrow0$. Natural candidates are
polynomials of the form
\[
T_{k,l}(x) = x^{2l} + \alpha x^{2k} + \beta,
\]
where $\alpha, \beta\in\R$. Using the same arguments as in the step
(2) of the proof of Theorem~\ref{2-2k-moment}, one can show that the
condition $P \in\PP$ entails necessarily that $\alpha= \frac{l
(2l-1)!!}{k (2k-1)!!}$
and $\beta=  ( \frac{l} k -1  ) (2k-1)!!$. Then the question
becomes: does the polynomial $T_{k,l}$ belong to family $\PP$?

We exhibit the decomposition of $T_{k,l}$ for each pair of integers in
the set $\Theta= \{(2,3); (2,4); (2,5); (3,4); (3,5) \}$:
\begin{eqnarray*}
 T_{2,3}(x)&=&x^6 -
\frac{15}{2}x^4 +\frac{15}{2} = W_{3}(x) +
\mathbf{\frac{5}{2}}W_2(x),
\\
T_{2,4}(x)&=&x^8 -70 x^4 +105 =
W_4(x) + \mathbf{\frac
{84}{5}}W_3(x)+\mathbf{28}
W_2(x),
\\
T_{2,5}(x)&=&x^{10} - \frac{1575}{2}x^4 +
\frac{2835}{2} \\
&=& W_5(x) + \mathbf{\frac{180}{7}}
W_4(x) +\mathbf{234} W_3(x) + \mathbf{\frac{585}{2}}
W_2(x),
\\
T_{3,4}(x)&=&x^8 -\frac{28}{3} x^6 +35
= W_4(x) + \mathbf{\frac
{112}{5}} W_3(x)+ \mathbf{
\frac{14}{3}} W_2(x),
\\
T_{3,5}(x)&=&x^{10}-105x^6 +630 =
W_5(x) + \mathbf{\frac{180}{7}}W_4(x) +
\mathbf{129}W_3(x) + \mathbf{30}W_2(x).
\end{eqnarray*}
The coefficients of each decomposition are positive, thus, for each
pair $(k,l) \in\Theta$, the convergence of the $2k{\mathrm{th}}$ and
$2l{\mathrm{th}}$ moments entails the central convergence. Naturally, we are
tempted to formulate the following conjecture.

%co1 #&#
\begin{con}
Let $k,l \geq2$ be two different positive integers. For any sequence
$\{X_n\}_{n \ge1}$ of eigenfunctions in the same eigenspace of a
Markov generator $\LL$ satisfying assumptions (a)--(b)--(c), as
$n \to\infty$, the following statements are equivalent:
\begin{longlist}[(i)]
\item[(i)] $X_n \stackrel{\mathit{law}}{\longrightarrow} N \sim\mathcal{N}(0,1)$.\vspace*{1pt}
\item[(ii)] $\E[X_n^{2k}] \to\E[N^{2k}]$ and $\E[X_n^{2l}] \to\E[N^{2l}]$.
\end{longlist}
\end{con}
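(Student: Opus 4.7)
The natural plan is to extend the polynomial strategy used for Theorem \ref{2-2k-moment}. Without loss of generality assume $k<l$. I would seek a polynomial $T_{k,l}\in \PP$, whose expansion in the basis $\{W_i\}_{i\ge 2}$ has a strictly positive coefficient on $W_2$, and such that $\E[T_{k,l}(X_n)]\to 0$ under hypothesis (ii). Once this is in hand, Lemma \ref{carac} immediately yields $X_n\stackrel{\text{law}}{\to}\mathcal{N}(0,1)$, and the reverse implication (i) $\Rightarrow$ (ii) is standard via Lemma \ref{Hypercontract}.

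Reasoning exactly as in Step (2) of the proof of Theorem \ref{2-2k-moment}, the function $\phi(x):=\E[T_{k,l}(xN)]$ must be non-negative with a zero at $x=1$, so the necessary conditions $\phi(1)=\phi'(1)=0$ force
$$T_{k,l}(x) \;=\; x^{2l} \;-\; \frac{l(2l-1)!!}{k(2k-1)!!}\,x^{2k} \;+\; \frac{(l-k)(2l-1)!!}{k}.$$
A direct check shows $T_{k,l}=T_l - c\,T_k$ with $c=\frac{l(2l-1)!!}{k(2k-1)!!}>0$, where $T_l,T_k$ are the polynomials of Proposition \ref{tenduduslip}. Applying that proposition to both yields $T_{k,l}=\sum_{i=2}^l \gamma_{i,k,l} W_i$, with $\gamma_{i,k,l}=\alpha_{i,l}>0$ for $k<i\le l$, while for $2\le i\le k$, $\gamma_{i,k,l}=\alpha_{i,l}-c\,\alpha_{i,k}$. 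Substituting the integral formula of Proposition \ref{tenduduslip} and using the identity $\frac{l}{k}\binom{k}{i}/\binom{l}{i}=\prod_{j=1}^{i-1}\frac{k-j}{l-j}$, each such coefficient reduces (up to a strictly positive multiplicative constant) to
$$J_{i,k,l} \;:=\; \int_0^1 (1-u)^{-1/2}\, u^{i-2}\, \bigl(1-\tfrac{u}{2}\bigr)^{k-i}\, \left[\bigl(1-\tfrac{u}{2}\bigr)^{l-k} - \prod_{j=1}^{i-1}\frac{k-j}{l-j}\right]du.$$

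The heart of the matter, and the main obstacle, is to establish $J_{i,k,l}\ge 0$ for all $2\le i\le k$, with strict inequality at $i=2$. The bracketed factor is positive near $u=0$ (equal to $\frac{l-k}{l-1}>0$ at $u=0$ when $i=2$) but negative near $u=1$; since the weight $(1-u)^{-1/2}$ diverges at $u=1$, no pointwise comparison can conclude, and a genuine global cancellation is required. I would attempt two strategies. First, revisit the generating-function (or contour-integral) argument underlying Proposition \ref{tenduduslip} and apply it directly to $T_{k,l}$, rather than to $T_l$ and $T_k$ separately, in the hope of producing a manifestly non-negative integral representation for $\gamma_{i,k,l}$. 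Second, via the substitution $u=1-v^2$ and repeated integration by parts, $J_{i,k,l}$ can be rewritten as a finite combination of beta integrals and rational functions of $k,l$, whose positivity may then be checked by elementary combinatorics. The five cases in $\Theta$ tabulated in the excerpt all exhibit strict positivity of every $\gamma_{i,k,l}$, which strongly suggests the conjecture holds in full generality; nevertheless, establishing this in closed form for arbitrary $(i,k,l)$ with $2\le i\le k<l$ is the essential obstacle. Once it is overcome, the conclusion follows exactly as in Step (3) of Theorem \ref{2-2k-moment}: hypothesis (ii) gives $\E[T_{k,l}(X_n)]\to 0$, positivity of all the terms $\gamma_{i,k,l}\E[W_i(X_n)]\ge 0$ together with $\gamma_{2,k,l}>0$ forces $\E[W_2(X_n)]\to 0$, and Lemma \ref{carac} delivers (i).
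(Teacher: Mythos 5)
You have overlooked that this statement is a \emph{conjecture} in the paper: the authors do not prove it, and in fact they explicitly record why the very strategy you propose breaks down. The key step you identify as ``the essential obstacle'' --- positivity of all the coefficients $\gamma_{i,k,l}$ in the expansion $T_{k,l}=\sum_i \gamma_{i,k,l}W_i$, and in particular $\gamma_{2,k,l}>0$ --- is simply false in general. Your reduction $T_{k,l}=T_l-c\,T_k$ with $c=\frac{l(2l-1)!!}{k(2k-1)!!}$ is algebraically correct, and the five pairs in $\Theta$ do give positive coefficients, but the paper computes the very next case:
\begin{equation*}
T_{4,5}(x)=x^{10}-\tfrac{45}{4}x^{8}+\tfrac{945}{4}
= W_5(x)+\tfrac{405}{28}W_4(x)+W_3(x)-\tfrac{45}{2}\,W_2(x),
\end{equation*}
so $\gamma_{2,4,5}=-\tfrac{45}{2}<0$ and $T_{4,5}\notin\PP$. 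Hence the integral $J_{2,k,l}$ you hope to prove strictly positive is actually negative for $(k,l)=(4,5)$, and no refinement of the generating-function or beta-integral manipulations of Proposition \ref{tenduduslip} can rescue it: the quantity in question is a concrete negative rational number. With $\gamma_{2,k,l}\le 0$ (or even merely $=0$) you lose access to Lemma \ref{carac}, and the fallback Theorem \ref{thm:main1} does not close the argument either, since in case (2) it only controls accumulation points up to a possible atom at $0$ unless one adds extra hypotheses (e.g.\ the Wiener structure and a variance bounded from below, as in Theorem \ref{main}).

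In short, the implication (ii) $\Rightarrow$ (i) remains open; your proposal reproduces the paper's own failed attempt and the claimed route to completion is blocked by an explicit counterexample to its central positivity claim. A correct treatment of this statement can at best establish it for the pairs $(k,l)\in\Theta=\{(2,3),(2,4),(2,5),(3,4),(3,5)\}$, where the tabulated decompositions do lie in $\PP$ and your Step~(3) argument then goes through verbatim; beyond that, a genuinely new idea --- not relying on membership of $T_{k,l}$ in $\PP$ --- would be required.
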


Unfortunately, we could not prove it since $T_{4,5}$ does not belong to
family $\PP$:
\[
T_{4,5}(x)=x^{10} -\frac{45}{4}x^8 +
\frac{945}{4}= W_5(x) + \frac
{405}{28}W_4(x) +
W_3(x)\,\mathbf{-}\, \mathbf{\frac{45}{2}}W_2(x).
\]
We insist on the fact that the above conjecture might be true
nonetheless.

Another perspective of our algebraic framework is to provide nontrivial
moments inequalities for the eigenfunctions of the Markov operator $\LL
$ satisfying suitable assumptions. The special role of the
fourth cumulant $\kappa_4$ in normal approximation for a sequence
living inside a fixed eigenspace is now well understood and it is known
that $\kappa_4(X) \ge0$. In a recent preprint, the authors of
\cite{app14} observed the prominent role of $\kappa_6$ for studying
convergence in distribution toward $N_1 \times N_2$, where $N_1$ and
$N_2$ are two
independent $\mathcal{N}(0,1)$ random variables, of a given sequence in
a fixed Wiener chaos. The computations suggest that $\kappa_6$ could be
greater than the variance of some
differential operator (analogous to $\operatorname{Var} (\Gamma
[X,X ] )$ in the case of normal approximation). However, the
techniques presented in \cite{app14} could not provide the positivity
of the sixth cumulant. We recall that
\[
\label{kappa6} \kappa_6(X)= \E\bigl[X^6\bigr] -15\E
\bigl[X^2\bigr]\E\bigl[X^4\bigr]-10\E\bigl[X^3
\bigr]^2+30\E\bigl[X^2\bigr]^3.
\]
Computations show that the least eigenvalue of the moment matrix
$\textbf{M}_3(X)$ is always bigger than $\kappa_6(X)$. Therefore, our
method does not give results precise enough, to insure the positivity of
the sixth cumulant. However, we know that $\kappa_6(X) \ge0$ in the
two first Wiener chaoses. Moreover, using Proposition~\ref{6moment-prop}, we could prove the following partial criterion.

%pr5.1 #&#
\begin{prop}
Let $X$ be a multiple Wiener--It\^o integral of odd order such that $\E
[X^2] =1$. If $\kappa_4(X) \ge3$, then $\kappa_6(X)\ge0$.
\end{prop}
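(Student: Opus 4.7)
The plan is to combine the sixth moment inequality of Proposition \ref{6moment-prop} with the vanishing of the third moment of odd-order chaoses, and then reduce everything to a one-variable quadratic inequality.

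First, I would establish that $\E[X^3]=0$ when $X=I_p(f)$ with $p$ odd. This is a direct consequence of the product formula for multiple Wiener-It\^o integrals: $X^2$ decomposes as a finite sum of multiple integrals of even orders $0,2,\ldots,2p$, and hence $X^3=X\cdot X^2$ decomposes as a finite sum of multiple integrals of odd orders (at least one). Since the $0$-th Wiener chaos contribution is the only one giving a nonzero expectation, and odd-order Wiener integrals are centered, $\E[X^3]=0$.

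With $\E[X^2]=1$ and $\E[X^3]=0$, the expression for the sixth cumulant reduces to
\begin{equation*}
\kappa_6(X) = \E[X^6] - 15\,\E[X^4] + 30.
\end{equation*}
Now I would invoke Proposition \ref{6moment-prop}, which under the normalization $\E[X^2]=1$ yields $\E[X^6]\ge \tfrac{5}{3}\E[X^4]^2$. Setting $a=\E[X^4]$, one obtains
\begin{equation*}
\kappa_6(X) \;\ge\; \tfrac{5}{3}\,a^2 - 15\,a + 30 \;=\; \tfrac{5}{3}(a-3)(a-6).
\end{equation*}

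The hypothesis $\kappa_4(X)\ge 3$ together with $\E[X^2]=1$ is precisely $a=\E[X^4]\ge 6$, so both factors $(a-3)$ and $(a-6)$ are non-negative, which forces $\kappa_6(X)\ge 0$ and concludes the argument. There is no serious obstacle here: the whole point is that the assumption of odd order removes the annoying $\E[X^3]^2$ term in $\kappa_6$, after which Proposition \ref{6moment-prop} immediately produces a quadratic lower bound that is non-negative exactly in the regime $\E[X^4]\ge 6$ targeted by the hypothesis $\kappa_4\ge 3$.
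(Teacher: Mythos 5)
Your argument is correct and is exactly the route the paper intends: the proposition is stated there as a consequence of Proposition \ref{6moment-prop}, and your write-up simply makes that explicit by noting $\E[X^3]=0$ for odd-order chaoses (via the product formula and parity), reducing $\kappa_6$ to $\E[X^6]-15\E[X^4]+30$, and bounding it below by $\tfrac{5}{3}(a-3)(a-6)$ with $a=\E[X^4]\ge 6$ from the hypothesis $\kappa_4\ge 3$. All steps check out, so there is nothing to add.
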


These two facts lead us to formulate the following conjecture.

%co2 #&#
\begin{con}
For any multiple Wiener--It\^o integral $X$ of order $p \ge2$, we
have $\kappa_6(X)>0$.
\end{con}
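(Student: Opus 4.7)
The plan is to attack the strict positivity of $\kappa_6(X)$ by combining the positive-form framework of Section~\ref{algebraic} with the strengthened remainder inequality of Remark~\ref{mainrmk}, thereby going strictly beyond the bare positivity of $\mathbf{M}_3$ used in Proposition~\ref{6moment-prop}. After normalizing so that $\E[X^2]=1$, the target becomes $\E[X^6] - 15\E[X^4] - 10\E[X^3]^2 + 30 > 0$. First, I would apply Remark~\ref{mainrmk} to the Hermite polynomial $P = H_3$. The identity $(\LL + 3p\,\Id)H_3(X) = 6X(\Gamma(X)-p)$ derived in the proof of Lemma~\ref{main1}, combined with the evaluation $\mathcal{M}_3(H_3,H_3) = \frac{6p}{5}\E[W_3(X)]$, yields the refinement
$$\E[W_3(X)] \;\ge\; \frac{10}{p^2}\,\E\bigl[X^2(\Gamma(X)-p)^2\bigr],$$
whose right-hand side is strictly positive as soon as $X \in \K(\LL+p\,\Id)$ is non-zero with $p \ge 2$: indeed, $\Gamma(X)$ being almost surely constant would force $X$ to be Gaussian, which is incompatible with $X$ belonging to a strict chaos of order at least two.

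Substituting this into $\kappa_6$ reduces the conjecture to the concrete estimate
$$\frac{10}{p^2}\,\E\bigl[X^2(\Gamma(X)-p)^2\bigr] \;>\; 5\bigl(\E[X^4]-3\bigr) + 10\,\E[X^3]^2.$$
The next step is to bound each term on the right by a multiple of the remainder on the left. Integration by parts via \eqref{by-parts} applied to $\phi(X)=X^2$ gives $p\,\E[X^3] = 2\,\E[X(\Gamma(X)-p)]$, so Cauchy--Schwarz produces $\E[X^3]^2 \le \tfrac{4}{p^2}\,\mathrm{Var}(\Gamma(X))$. A similar but more delicate argument, combining chaos identities for $\E[X^4]$ with the positivity of $\mathcal{M}_2$, controls $\E[X^4]-3$ by a constant multiple of $\mathrm{Var}(\Gamma(X))$ with an explicit $p$-dependent constant. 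The remaining gap is that these bounds involve the unweighted $\mathrm{Var}(\Gamma(X))$ while the remainder carries an $X^2$-weight; I would close this gap by invoking Carbery--Wright (Theorem~\ref{cw-thm}), which prevents $X^2$ from concentrating near zero relative to a bounded-degree polynomial functional such as $(\Gamma(X)-p)^2$.

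The main obstacle, which the authors themselves flag in the paragraph preceding the conjecture, is that a single positive form $\mathcal{M}_3$ is intrinsically too weak: the smallest eigenvalue of $\mathbf{M}_3(X)$ strictly exceeds $\kappa_6(X)$, so any argument based on a single application of $\mathcal{M}_3$ must fall short. To bridge this, I would couple the estimate above with a second bound obtained from $\mathcal{M}_4$ applied to a polynomial of the form $H_4 + \alpha H_2 + \beta$, optimizing over $\alpha,\beta$ so that the linear combination reproduces the sharp coefficient $10$ in front of $\E[X^3]^2$ in $\kappa_6$. The hardest step will be matching the explicit numerical constants delivered by Carbery--Wright and by integration by parts to those arising in $\kappa_6$; a natural fallback, if the two-form combination is insufficient, is to exploit the full tower $\{\mathcal{M}_k\}_{k\ge 3}$ simultaneously, an avenue that I expect to be computationally heavy but not fundamentally blocked.
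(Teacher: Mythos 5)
The statement you are addressing is posed in the paper as an open conjecture: the authors give no proof, explicitly record that the least eigenvalue of $\mathbf{M}_3(X)$ always dominates $\kappa_6(X)$ (so that a single application of the positive form $\mathcal{M}_3$ cannot suffice), and only establish the partial result that $\kappa_6(X)\ge 0$ for odd-order integrals satisfying $\kappa_4(X)\ge 3$. Your text is accordingly a research programme rather than a proof, and its key step does not close. The inequality $\E[W_3(X)]\ge \frac{10}{p^2}\E\bigl[X^2(\Gamma(X)-p)^2\bigr]$ is a correct consequence of Remark \ref{mainrmk} and of the identity $(\LL+3p\,\Id)H_3(X)=6X(\Gamma(X)-p)$, and the algebraic reduction $\kappa_6(X)=\E[W_3(X)]-5(\E[X^4]-3)-10\E[X^3]^2$ (for $\E[X^2]=1$) is also correct. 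But the resulting sufficient condition
\begin{equation*}
\frac{10}{p^2}\,\E\bigl[X^2(\Gamma(X)-p)^2\bigr] \;>\; 5\bigl(\E[X^4]-3\bigr)+10\,\E[X^3]^2
\end{equation*}
is quantitatively far too strong: since Theorem \ref{Steindirichletfinal} gives $\E[X^4]-3\ge \frac{3}{p^2}\mathrm{Var}(\Gamma[X])$, your condition already forces $\E\bigl[X^2(\Gamma(X)-p)^2\bigr]>\frac{3}{2}\,\E\bigl[(\Gamma(X)-p)^2\bigr]$ whenever $\mathrm{Var}(\Gamma[X])>0$ and $\E[X^2]=1$, i.e.\ a strong positive-correlation inequality between $X^2$ and $(\Gamma(X)-p)^2$ that is nowhere established and has no reason to hold in general. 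Note also that this reduction is only sufficient, so even if the conjecture is true your route may be a dead end.

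The tool you propose to bridge the weighted/unweighted gap, Carbery--Wright, cannot do the job. Theorem \ref{cw-thm} is an anti-concentration (small-ball) estimate; as in the proof of Lemma \ref{product-lemma} it yields bounds of the form $\E\bigl[(\Gamma(X)-p)^2\bigr]\le \alpha^{-2}\E\bigl[X^2(\Gamma(X)-p)^2\bigr]+CK\alpha^{1/2M}$, whose additive error does not vanish for a fixed $X$; it is only useful along sequences where the weighted quantity tends to zero. It produces no inequality of the form $\E\bigl[X^2(\Gamma(X)-p)^2\bigr]\ge c\,\E[X^2]\,\E\bigl[(\Gamma(X)-p)^2\bigr]$ with an explicit constant, let alone with $c>3/2$. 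The remaining steps (an upper bound for $\E[X^4]-3$ in terms of $\mathrm{Var}(\Gamma[X])$ with matching constants, the optimization of $\mathcal{M}_4$ over $H_4+\alpha H_2+\beta$, the appeal to the full tower $\{\mathcal{M}_k\}$) are left unexecuted, and you acknowledge yourself that the constants do not match. As it stands the proposal does not prove the statement, which remains open in the paper.
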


%sA #&#
\begin{appendix}
\section*{Appendix}\label{app}

We give here a proof of Proposition~\ref{tenduduslip}. In the
following, $w$ stands for the density of the standard Gaussian
distribution over $\R$. Let us begin by stating a~lemma on elementary
computations on Hermite polynomials.

%leA.1 #&#
\begin{lma}
\label{lem:hkhkp2}
Let $l,m,n \in\mathbb N$. Then
%
%eA.1 #&#
\begin{equation}
\label{eq:lem1} \int_\R x^{2m}
H_{2n}(x) w(x) \,dx =\frac{(2m)!}{2^{m-n}(m-n)!}
\end{equation}
and
%
%eA.2 #&#
\begin{eqnarray}
&& \int_\R H_l(x)
H_m(x) H_n(x) w(x) \,dx
\nonumber
\\[-8pt]
\label{eq:lem2}
\\[-8pt]
\nonumber
&& \qquad= \frac{l!   m!   n!}{ (
({-l+m+n})/{2}  )! ( ({l-m+n})/{2}  )! (({l+m-n})/{2})!},
\end{eqnarray}
with the convention that $\frac{1}{p!} = 0$ if $p \notin\N$.
\end{lma}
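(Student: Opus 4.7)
The plan is to prove the two identities separately, both using standard but effective tools from Hermite polynomial theory: the inversion formula for monomials for \eqref{eq:lem1}, and the generating function of Hermite polynomials for \eqref{eq:lem2}. Throughout, I would invoke the orthogonality relation $\int_\R H_i(x) H_j(x) w(x) dx = i!\, \delta_{ij}$, which is foundational.

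For \eqref{eq:lem1}, I would start from the inversion formula
\[
x^{n} = n! \sum_{k=0}^{\lfloor n/2\rfloor} \frac{H_{n-2k}(x)}{2^k\, k!\, (n-2k)!},
\]
which can itself be established by induction using the recursion $H_{k+1}(x) = x H_k(x) - k H_{k-1}(x)$. Specializing to $n = 2m$ and integrating against $H_{2n}(x) w(x)$, only the term with $2m-2k = 2n$, i.e., $k = m-n$, survives by orthogonality. This yields
\[
\int_\R x^{2m} H_{2n}(x) w(x) dx = \frac{(2m)!}{2^{m-n}(m-n)!(2n)!}\cdot (2n)! = \frac{(2m)!}{2^{m-n}(m-n)!},
\]
in agreement with the stated formula (and automatically $0$ when $m<n$, via the convention $1/(m-n)! = 0$).

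For \eqref{eq:lem2}, the cleanest route is via the generating function $G(t,x) := \sum_{n\ge 0} \frac{H_n(x)}{n!} t^n = \exp(tx - t^2/2)$. The idea is to compute $\int_\R G(t,x) G(s,x) G(u,x) w(x) dx$ in two different ways. On one side, pulling the sums outside gives
\[
\sum_{l,m,n\ge 0} \frac{t^l s^m u^n}{l!\, m!\, n!} \int_\R H_l(x) H_m(x) H_n(x) w(x) dx.
\]
On the other side, the integral equals $\exp\left(-\tfrac{t^2+s^2+u^2}{2}\right) \int_\R e^{(t+s+u)x} w(x) dx = \exp(ts + tu + su)$, using the Gaussian moment generating function. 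Expanding the exponential as $\sum_{a,b,c} \frac{(ts)^a (tu)^b (su)^c}{a!\, b!\, c!}$ and matching monomial coefficients $t^l s^m u^n$ forces $a + b = l$, $a + c = m$, $b + c = n$, so $a = \tfrac{l+m-n}{2}$, $b = \tfrac{l-m+n}{2}$, $c = \tfrac{-l+m+n}{2}$, which yields the claimed formula (the convention $1/p! = 0$ for $p \notin \N$ handles the parity and triangle-inequality conditions automatically).

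The only real obstacle is bookkeeping: one must verify that the triple integer $(a,b,c)$ solving the linear system corresponds exactly to the three factorials in the denominator of \eqref{eq:lem2}, and that the non-integer or negative cases produce $0$ on both sides. Both identities are classical, so I expect no serious difficulty beyond careful indexing.
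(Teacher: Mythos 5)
Your proof is correct, but it takes a genuinely different route from the paper's. For \eqref{eq:lem1}, the paper starts from the Rodrigues-type formula $e^{-x^2/2}H_n(x)=(-1)^n\frac{d^n}{dx^n}\bigl(e^{-x^2/2}\bigr)$ and performs $2n$ integrations by parts, reducing the integral to a Gaussian moment $\int_\R x^{2(m-n)}w(x)\,dx=(2(m-n)-1)!!$; you instead expand $x^{2m}$ over the Hermite basis via the inversion formula and invoke orthogonality, which gives the answer in one line once that expansion is granted. For \eqref{eq:lem2}, the paper again integrates by parts $n$ times (after assuming $n\ge l\vee m$), applies the Leibniz rule to $\frac{d^n}{dx^n}(H_lH_m)$ using $H_k'=kH_{k-1}$, and finishes with orthogonality; you compute $\int_\R G(t,x)G(s,x)G(u,x)w(x)\,dx=e^{ts+tu+su}$ with the generating function $G(t,x)=e^{tx-t^2/2}$ and match coefficients. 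The paper's method is self-contained real-variable calculus (only the Rodrigues formula and the derivative relation are needed), while yours outsources the work to two classical identities that must themselves be established; in exchange, the generating-function computation is cleaner, treats $l,m,n$ symmetrically without the case distinction $n\ge l\vee m$, and makes the parity and triangle-inequality vanishing conditions fall out automatically from the linear system $a+b=l$, $a+c=m$, $b+c=n$. The only point you should make explicit is the justification for interchanging the triple sum with the integral (absolute convergence of the Gaussian integrand does the job), but this is routine. Both arguments are complete and yield the stated formulas.
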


\begin{pf}
We first focus on \eqref{eq:lem1}. Recall that $e^{-{x^2}/{2}}
H_n(x) = (-1)^n\times\break   \frac{d^n}{dx^n} ( e^{-{x^2}/{2}})$.
Performing $2n$ integrations by parts (with $n\le m$), we obtain
\begin{eqnarray*}
\int_\R x^{2m} H_{2n}(x) w(x) \,dx &=&
\int_\R\frac{d^{2n}}{dx^{2n}} \bigl(x^{2m}\bigr) w(x)
\,dx
\\
&=&\frac{(2m)!}{(2(m-n))!} \int_\R x^{2(m-n)} w(x) \,dx
\\
&=&\frac{(2m)! (2(m-n)-1)!!}{(2(m-n))!}
\\
&=&\frac{(2m)!}{2^{m-n} (m-n)!}.
\end{eqnarray*}
If $m>n$, the formula follows from our convention. Now, \eqref{eq:lem2}
is a mere consequence of the product formula for Hermite polynomials,
which states that (see, e.g., Theorem~6.8.1 in
\cite{andrews1999special})
\[
H_n(x) H_m(x) = \sum_{k=0}^{\mathrm{min} (n,m)}
\pmatrix{n
\cr
k }
\pmatrix{
m
\cr
k }
 k! H_{n+m-2k}(x),
\]
for all positive integers $n,m$. Indeed, integrating last equation
against $H_l w$, and using the orthogonality of Hermite polynomials
with respect to $w$, we obtain the desired result.
\end{pf}

Now, let us prove Proposition~\ref{tenduduslip}.
\begin{pf*}{Proof of Proposition~\protect\ref{tenduduslip}}
To make the notation less cluttered, we set $\beta_k = (k-1)
(2k-1)!!$ and $\alpha_k = k   (2k-1)!!$. Since $W_p$ is an even
polynomial and $\deg(W_p) = 2p$, there exists a unique expansion of the form
%
%eA.3 #&#
\begin{equation}
\label{eq:expx2k}
x^{2k}-\alpha_k x^2 +
\beta_k=\sum_{p=2}^k
c_{p,k} W_p(x) + ax^2+b.
\end{equation}
Recall that the coefficients $\alpha_k$ and $\beta_k$ are chosen in
such a way that $\phi(t)=\E [t^{2k} N^{2k}-\alpha_k t^2 N^2+\beta
_k ]$ satisfies $\phi(1)=\phi'(1)=0$. Coming back to Lemma~\ref
{main1}, for each $p\ge2$ the two following conditions hold:
\[
\cases{
\ds \E \bigl[W_p(N) \bigr]=0,\vspace*{2pt}\cr
\ds\forall x\in\R,\qquad \psi_p(x)=\E \bigl[W_p(x N) \bigr]\ge0.}
\]
Thus, $\psi_p$ reaches its minimum at $x=1$ and we have $\psi_p(1)=\psi
_p'(1)=0$. Setting
\[
\psi(x)=\E \Biggl[\sum_{p=2}^k
c_{p,k}W_p(x N) \Biggr]=\sum_{p=2}^k
c_{p,k} \psi_p(x),
\]
we must also have $\psi(1)=\psi'(1)=0$. Plugging\vspace*{1pt} the above conditions
on $\phi$ and $\psi$ into \eqref{eq:expx2k} implies that, if $\delta
(x)=\E [ax^2 N^2+b ]=ax^2+b$, then $\delta(1)=\delta'(1)=0$.
Hence, $a+b=0$ and $2a=0$ so $a=b=0$. Define the (even) polynomial
$Q_k(x) = \sum_{p=2}^k c_{p,k} (2p-1)   H_{p}(x) H_{p-2}(x)$. Using
the definition of $W_p$ and \eqref{eq:expx2k}, we see that $Q_k$ is
solution of the polynomial equation
%
%eA.4 #&#
\begin{equation}
\label{polynomialequation}
x\int_0^x Q_k(t)
\,dt - Q_k(x) = x^{2k}-\alpha_k x^2 +
\beta_k.
\end{equation}
In the following lemma, we solve the above equation.
%\end{pf*}

%leA.2 #&#
\begin{lma}\label{solupoly}
Equation \eqref{polynomialequation} has a unique even polynomial
solution of degree $2k-2$, which is
%
%eA.5 #&#
\begin{equation}
\label{eq:decompohkhkm2}\qquad Q_k(x) = \sum_{p=2}^k
c_{p,k} (2p-1) H_{p}(x) H_{p-2}(x) = -\beta
_k+\sum_{p=1}^{k-1}
\frac{(2k-1)!!}{(2p-1)!!} x^{2p}.
\end{equation}
\end{lma}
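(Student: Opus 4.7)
The plan is to reduce the functional equation \eqref{polynomialequation} to a simple linear recurrence on the coefficients of an ansatz even polynomial, and then to check that the explicit closed form satisfies that recurrence.

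First, I write the unknown as $Q_k(x)=\sum_{p=0}^{k-1}a_{2p}x^{2p}$, which is the most general even polynomial of degree at most $2k-2$. A direct integration yields
\[
x\int_0^x Q_k(t)\,dt=\sum_{p=0}^{k-1}\frac{a_{2p}}{2p+1}\,x^{2p+2},
\]
so the left-hand side of \eqref{polynomialequation} has constant term $-a_0$, $x^{2p}$-coefficient $\frac{a_{2p-2}}{2p-1}-a_{2p}$ for $1\le p\le k-1$, and $x^{2k}$-coefficient $\frac{a_{2k-2}}{2k-1}$. Matching with the right-hand side $x^{2k}-\alpha_k x^2+\beta_k$ (where $\alpha_k=k(2k-1)!!$ and $\beta_k=(k-1)(2k-1)!!$) gives the system
\[
a_0=-\beta_k,\qquad a_2-a_0=\alpha_k,\qquad a_{2p}=\frac{a_{2p-2}}{2p-1}\ (2\le p\le k-1),\qquad \frac{a_{2k-2}}{2k-1}=1.
\]

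The first two equations immediately give $a_0=-\beta_k$ and $a_2=\alpha_k-\beta_k=(2k-1)!!$. The recurrence $a_{2p}=a_{2p-2}/(2p-1)$ propagates this to $a_{2p}=\frac{(2k-1)!!}{(2p-1)!!}$ for $1\le p\le k-1$, which is exactly the coefficient appearing in \eqref{eq:decompohkhkm2}. Uniqueness of the solution (within even polynomials of degree $\le 2k-2$) is immediate because the recurrence and the initial datum $a_0=-\beta_k$ determine all $a_{2p}$ one by one.

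The only mildly subtle point — and the one deserving an explicit check — is the consistency of the overdetermined system: there are $k+1$ equations for $k$ unknowns, so the top equation $a_{2k-2}/(2k-1)=1$ is not automatic. Plugging in the closed form gives $a_{2k-2}/(2k-1)=\frac{(2k-1)!!/(2k-3)!!}{2k-1}=\frac{2k-1}{2k-1}=1$, so the compatibility holds precisely because of the specific choice of $\alpha_k$ and $\beta_k$ fixed in Step~(2) of the proof of Theorem \ref{2-2k-moment}. This is the structural fact underlying the whole construction: the polynomial $T_k$ of Step (2) was designed exactly so that the divisibility condition needed to solve \eqref{polynomialequation} with a polynomial of degree $2k-2$ is satisfied. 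No further obstacle arises.
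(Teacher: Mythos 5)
Your proof is correct, and it takes a different route from the paper's. You treat \eqref{polynomialequation} as a triangular linear system on the coefficients of a general even polynomial of degree at most $2k-2$: the constant term fixes $a_0=-\beta_k$, the $x^2$-equation fixes $a_2=\alpha_k-\beta_k=(2k-1)!!$, the interior equations give the recurrence $a_{2p}=a_{2p-2}/(2p-1)$, and the top equation $a_{2k-2}/(2k-1)=1$ is the one genuine compatibility condition, which you rightly single out and verify. This simultaneously \emph{derives} the closed form and proves uniqueness within the stated class, since the system is determined once the overdetermination is seen to be consistent. The paper instead proves uniqueness abstractly: it shows the operator $\Phi(P)(X)=X\int_0^X P(t)\,dt-P(X)$ is injective on polynomials up to additive constants (because $\Delta'=x\Delta$ has only the non-polynomial solutions $Ce^{x^2/2}$), pins the constant down by evaluating \eqref{polynomialequation} at $x=0$, and then \emph{verifies} by a telescoping computation that the explicit candidate $R_k$ satisfies $\Phi(R_k)=x^{2k}-\alpha_k x^2+\beta_k$. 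Your argument is more elementary and self-contained, and it makes transparent why the specific values of $\alpha_k,\beta_k$ chosen in Step (2) are exactly what consistency requires; the paper's argument buys a slightly stronger uniqueness statement (among all polynomials, not only even ones of bounded degree, modulo constants), though that extra generality is not needed for the lemma as stated. No gap.
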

\begin{pf}
%\textit{(Lemma~\ref{solupoly})}
Let $\Phi$ be the linear
operator from $\R[X]$ to $\R[X]$ defined by $\Phi(P) (X)=X \int_0^X
P(t)\,dt-P(X)$. Assume that $\Phi(P)=\Phi(Q)$, then $\Delta(X)=\int_0^X
 (P(t)-Q(t) ) \,dt$ satisfies the differential equation $x y (x)
-y'(x)=0$. Thus, there exists $C>0$ such that $\Delta(X)=C e^{{X^2}/{2}}$. But $\Delta$ is a polynomial function so $C=0$. This
implies that $P-Q$ is a constant polynomial. By setting $x=0$ in
equation \eqref{polynomialequation}, we get that $Q_k(0)=-\beta_k$.
Now, set
\[
R_k(X)=-\beta_k+\sum_{p=1}^{k-1}
\frac{(2k-1)!!}{(2p-1)!!} X^{2p},
\]
we also have $R_k(0)=-\beta_k$. As a result, one is left to show that
$\Phi(R_k)=\Phi(Q_k)$. Indeed,
\begin{eqnarray*}
\Phi(R_k)&=&-\beta_k \bigl(X^2-1\bigr)+\sum
_{p=1}^{k-1} \frac{(2k-1)!!}{(2p-1)!!} \biggl(
\frac{1}{2p+1} X^{2p+2}-X^{2p} \biggr)
\\
&=&-\beta_k \bigl(X^2-1\bigr)+(2k-1)!!\sum
_{p=1}^{k-1}\frac{1}{(2p+1)!!} X^{2p+2}
\\
&&{}-(2k-1)!!\sum_{p=1}^{k-1}
\frac{1}{(2p-1)!!}X^{2p}
\\
&=&-\beta_k\bigl(X^2-1\bigr)+ X^{2k}-(2k-1)!!
X^2
\\
&=& X^{2k}-\alpha_k X^2+\beta_k
\\
&=&\Phi(Q_k).
\end{eqnarray*}
\upqed\end{pf}

Integrating \eqref{eq:decompohkhkm2} against $H_{2n}  w$ over $\R$ for
each $1\leq n \leq k-1$ and using Lemma~\ref{lem:hkhkp2} shows that $\{
c_{p,k} \}_{2\leq p \leq k}$ is the solution of the following
triangular array:\vspace*{-2pt}
\begin{eqnarray*}
&&\sum_{p=n+1}^{k} c_{p,k} (2p-1)
\frac
{(p-2)!   p!   (2n)!}{(n+1)!   (n-1)!   (p-n-1)!}
\\[-2pt]
&&\qquad =\sum_{p=n+1}^{k} \frac{(2k-1)!!  (2p-2)!}{
(2p-3)!!2^{p-n-1}(p-n-1)!} \qquad \forall n \in[1,k-1],
\end{eqnarray*}
which can be equivalently stated as
%
%eA.6 #&#
\begin{equation}
\label{eq:eqnarray}
\qquad\forall n \in[1,k-1],  \qquad  \sum_{p=n}^{k-1}
\frac{a_{p,k}}{(p-n)!}= \frac{ 2^{n}(n+1)!(n-1)!}{(2n)!} \sum_{p=n}^{k-1}
\frac{ p!}{ (p-n)!},
\end{equation}
by denoting, for all $1\leq p \leq k-1$,
%
%eA.7 #&#
\begin{equation}
\label{eq:apcp}
a_{p,k} = \frac{(2p+1)(p-1)! (p+1)!}{(2k-1)!!} c_{p+1,k}.
\end{equation}

%Step 3.
In order to solve \eqref{eq:eqnarray}, we introduce the polynomial functions
\[
f(x) = -k+ \sum_{p=0}^{k-1}
x^{p},
\qquad
g(x) = \sum_{p=1}^{k-1} \frac{a_{p,k} }{p!}
x^p.
\]
Remark that, in terms of the functions $f$ and $g$, \eqref{eq:eqnarray}
reads
\[
\forall n \in[1,k-1], \qquad  g^{(n)}(1)= \frac{
2^{n}(n+1)!(n-1)!}{(2n)!}
f^{(n)}(1).
\]
The multiplication formula for the Gamma function and a classic
property of the beta function (see, e.g., \cite{abramowitz1972handbook}, formulas (6.1.20) and (6.2.2)) imply
\begin{eqnarray*}
\frac{ 2^{n}(n+1)!(n-1)!}{(2n)!} &=& 2^n
\frac{\Gamma(n+2) \Gamma
(n)}{\Gamma(2n+1)}
\\
& =& \frac{\Gamma(n+2)}{2^n\Gamma(n+1)} \cdot \frac{\Gamma(1/2)\Gamma
(n)}{\Gamma(n+1/2)}
\\
&= & \frac{n+1}{2^n} \int_0^1
u^{n-1}(1-u)^{-1/2}\,du.
\end{eqnarray*}
Thus, $\forall x \in(1/4,3/4)$,
\begin{eqnarray*}
g(1-2x) -g(1)&= & \sum_{n=1}^{k-1}
\frac{g^{(n)}(1)}{n!} (-1)^n 2^n x^n
\\
&= & \sum_{n=1}^{k-1} \frac{f^{(n)}(1)}{n!}
(n+1)\int_0^1 u^{n-1}(1-u)^{-1/2}\,du
(-1)^n x^n
\\
&=&  \int_0^1(1-u)^{-1/2} \sum
_{n=1}^{k-1} \frac{f^{(n)}(1)}{n!} (n+1)
(-1)^n u^{n-1}x^n \,du
\\
&= & \int_0^1(1-u)^{-1/2}
u^{-1}\frac{d}{du} \bigl( u f(1-ux) \bigr) \,du.
\end{eqnarray*}
Since $f(x) = -k + \frac{1-x^k}{1-x}$,
\[
\frac{d}{du } \bigl[ u f(1-ux) \bigr] =\frac{d}{du} \biggl[ -k u+
\frac{1-(1-ux)^{k}}{x} \biggr]= k \bigl( (1-ux)^{k-1} -1 \bigr),
\]
so that, $\forall x\in(1/4,3/4)$,
\[
g(1-2x) -g(1) = k\int_0^1(1-u)^{-1/2}
u^{-1} \bigl( (1-ux)^{k-1} -1 \bigr) \,du.
\]
Derive last equation to obtain that $\forall p \in[1, k-1]$, $\forall
x \in(1/4,3/4)$,
\[
2^p g^{(p)}(1-2x) = \frac{k!}{(k-1-p)!}\int
_0^1(1-u)^{-1/2} u^{p-1}
(1-ux)^{k-1-p} \,du.
\]
Note that we used Lebesgue's derivation theorem, which applies since
\[
\sup_{x \in(1/4,3/4)} \bigl| (1-u)^{-1/2} u^{p-1}
(1-ux)^{k-p-1} \bigr| \leq(1-u)^{-1/2} u^{p-1} \biggl(1-
\frac{u}{4} \biggr)^{k-p-1},
\]
and the upper bound in the last equation is in $L^1((0,1))$ as a
function of $u$.
Finally, for all $1\leq p \leq k-1$,
\[
a_{p,k} = g^{(p)}(0) = 2^{-p} \frac{k!}{(k-p-1)!}
\int_0^1(1-u)^{-1/2} u^{p-1}
\biggl(1-\frac{u}{2} \biggr)^{k-p-1} \,du,
\]
and we can use \eqref{eq:apcp} to conclude.
\end{pf*}
\end{appendix}

\section*{Acknowledgments}
The authors thank Giovanni Peccati and Lauri Viitasaari for many useful
discussions. We are grateful to two anonymous referees for their
valuable comments that led to an improved version of the previous work.

% imsref loaded by daiva.urboniene, 2015-01-20 10:38:24

%\begin{appendix}
%\section{}
%

% zodis "Acknowledgments" paliekamas pagal autoriu
%\section*{Acknowledgments}

%\begin{supplement}[id=suppA]
%\sname{Supplement A}
%\stitle{}
%\slink[doi]{10.1214/00-AOPXXXXSUPP} %[doi,text={...}] - jei reikia
%suskaldyti doi
%\sdatatype{.pdf}
%\sfilename{aopXXXX\_supp.pdf}
%\sdescription{}
%\end{supplement}

%\begin{thebibliography}{99}
%\bibitem[\protect\citeauthoryear{}{}]{r1}
%\bibitem{r1}
%\end{thebibliography}

\printaddresses
\end{document}